\documentclass[letterpaper,article,oneside,11pt]{memoir}

\usepackage[dvipsnames]{xcolor}
\usepackage{amsmath}
\usepackage{amssymb}
\usepackage{amsthm}
\usepackage{mathrsfs}
\usepackage{lmodern}
\usepackage[T1]{fontenc}
\usepackage[utf8]{inputenc}
\usepackage{enumitem}
\usepackage{tikz}
\usetikzlibrary{decorations.pathreplacing,calligraphy}
\usepackage[capitalize]{cleveref}
\usepackage[backend=biber]{biblatex}
\addbibresource{arxiv.bib}

\settrims{0pt}{0pt}
\setheadfoot{0.5in}{0.5in}
\setulmarginsandblock{1.0in}{1.0in}{*}
\setlrmarginsandblock{1.0in}{1.0in}{*}
\checkandfixthelayout

\counterwithout{equation}{section}
\counterwithout{section}{chapter}

\renewcommand{\maketitle}{
 {\centering\textbf{\thetitle}\par
  \vspace{0.5em}
  \centering{\theauthor}\par 
  \vspace{2em}
 }
}

\setsecheadstyle{\bfseries}

\setsecnumdepth{subsubsection}
\setsecnumformat{\csname the#1\endcsname. }
\setsubsecheadstyle{\normalfont\bfseries}
\setaftersubsecskip{-0.5em}
\setsubsubsecheadstyle{\normalfont\bfseries}
\setaftersubsubsecskip{-0.5em}

\OnehalfSpacing
\setlength{\intextsep}{0.5em}
\setlength{\floatsep}{0.5em}
\setlength{\textfloatsep}{0.5em}

\newtheorem{theorem}{Theorem}

\newtheorem{proposition}[theorem]{Proposition}
\newtheorem{corollary}[theorem]{Corollary}
\newtheorem{lemma}[theorem]{Lemma}

\theoremstyle{definition}
\newtheorem{example}[theorem]{Example}
\newtheorem{question}[theorem]{Question}
\newtheorem{remark}[theorem]{Remark}

\newtheorem{definition}[theorem]{Definition}

\newcommand{\metric}{\mathsf{d}}

\newcommand{\intd}{\;\mathsf{d}}

\newcommand{\N}{\mathbb{N}}
\newcommand{\Z}{\mathbb{Z}}

\newcommand{\meas}{\mathcal{M}}

\DeclareMathOperator{\symdiff}{\triangle}
\newcommand{\lp}{\mathsf{L}}
\newcommand{\ltwo}{\lp^{\!\mathsf{2}}}

\newcommand{\KR}{\mathsf{KR}}

\newcommand{\id}{\mathsf{Id}}

\newcommand{\lft}{\mathsf{\mathsf{L}}}
\newcommand{\rht}{\mathsf{\mathsf{R}}}
\DeclareMathOperator{\lst}{\mathsf{e}}
\DeclareMathOperator{\bad}{\mathsf{V}}

\DeclareMathOperator{\p}{\mathsf{p}}
\DeclareMathOperator{\q}{\mathsf{q}}
\renewcommand{\P}{\mathsf{P}}
\DeclareMathOperator{\Q}{\mathsf{Q}}

\newcommand{\scl}{\mathsf{J}}

\newcommand{\define}[1]{\textbf{#1}}
\renewcommand{\emptyset}{\varnothing}

\setlength{\intextsep}{2em}

\definecolor{dg}{RGB}{0,128,0}

\usepackage[normalem]{ulem}

\setcounter{totalnumber}{1}

\begin{document}

\title{A rank one mild mixing system without minimal self joinings}
\author{Jon Chaika \and Donald Robertson}
\maketitle

\begin{abstract}
We show that there is a rank 1 transformation that is mildly mixing but does not have minimal self-joinings, answering a question of Thouvenot.
\end{abstract}

\section{Introduction}

A \define{joining} of two measure-preserving system $(X,\mu,T)$ and $(Y, \nu,S)$ is any $T \times S$ invariant measure on $X \times Y$ that is mapped to $\mu$ by the first coordinate projection and to $\nu$ by the second.
The product measure $\mu \otimes \nu$ is always a joining.
The existence of other joinings of $(X\,\mu,T)$ and $(Y,\nu,S)$ detects whether the systems have properties in common.
In particular, if  $(X,\mu,T)$ and $(Y,\nu,S)$ have no other joinings they tend to have orthogonal dynamical properties.
When $\mu \otimes \nu$ is the only joining the systems $(X,\mu,T)$ and $(Y,\nu,S)$ are called \define{disjoint}.
Joinings and disjointness were first studied by Furstenberg~\cite{MR0213508}.
The survey of de la Rue~\cite{MR4647076} is a good introduction.

A non-trivial system $(X,\mu,T)$ is never disjoint from itself: there are always \define{off-diagonal} joinings obtained by embedding $X$ in $X \times X$ using the map $x \mapsto (x,T^n x)$ for any $n \in \Z$.
When these and the product measure are the only ergodic joinings of $(X,\mu,T)$ with itself the system $(X,\mu,T)$ is said to have \define{minimal self-joinings}.
Rudolph~\cite{MR0555301} gave the first example of a system with minimal self-joinings.

King~\cite{MR0963154} showed that all mixing rank one systems have minimal self-joinings.
The survey of Ferenczi~\cite{MR1436950} collects many equivalent definitions of rank one and we record here the definition most useful for our purposes.

\begin{definition}[{\cite[Definition~6]{MR1436950}}]
A system $(X,\mu,T)$ is \define{rank one} if for every measurable set $A \subset X$ and every $\epsilon > 0$ there is a measurable set $F \subset X$ and $h \in \N$ and a measurable set $A' \subset X$ such that
\begin{itemize}
\item 
the sets $F, TF,\dots,T^{h-1} F$ are pairwise disjoint
\item
$\mu(A' \symdiff A) < \epsilon$
\item
$\mu(F \cup TF \cup \cdots \cup T^{h-1} F) > 1 - \epsilon$
\item
$A'$ belongs to the $\sigma$-algebra generated by $\{ F, TF,\dots, T^{h-1} F\}$
\end{itemize}
all hold.
\end{definition}

Thouvenot asked the following question.

\begin{question}[{\cite[1.2 in Section~12]{MR4232241}}]
\label{q:thouvenot}
Does a non-rigid rank one transformation without factors have the minimal self-joinings property?
\end{question}

To make sense of this question we recall the following definitions.

\begin{definition}
Fix a system $(X,\mu,T)$.
Say that $f \in \ltwo(X,\mu)$ is \define{rigid} for $T$ if there is a sequence $\textit{\textbf{r}}$ in $\Z$ with $|r(n)| \to \infty$ and $T^{r(n)} f \to f$ in $\ltwo(X,\mu)$.
In this situation we say that $f$ is \define{rigid} along $\textbf{\textit{r}}$.
Any such sequence is called an $f$-\define{rigidity sequence}.
One says that $(X,\mu,T)$ is \define{mild mixing} if the only rigid functions are the constant functions.
\end{definition}

King~\cite[Page~377]{MR0863200} proved that all non-trivial factors of rank one systems are rigid.
As mild mixing systems cannot have non-trivial rigid factors, \cref{q:thouvenot} is equivalent to asking whether mildly mixing rank-one systems have minimal self-joinings.
We construct a mild-mixing rank-one system that does not have minimal self-joinings, thereby answering negatively \cref{q:thouvenot}.

\begin{theorem}
\label{thm:main}
There is a mild mixing rank one system $(X,\mu,T)$ without minimal self-joinings.
\end{theorem}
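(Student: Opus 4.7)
The plan is to produce $(X,\mu,T)$ by an explicit cutting-and-stacking construction whose spacer data is chosen to simultaneously deliver mild mixing and an extra ergodic self-joining. Inductively, given a Rokhlin tower $\mathcal{T}_k$ of height $h_k$ with base $F_k$, cut $F_k$ into $q_k$ equal strips, stack them in order, and insert spacer columns of heights $s_k(1),\dots,s_k(q_k-1)$ between consecutive strips to obtain a tower of height $h_{k+1}=q_k h_k + \sum_j s_k(j)$. Rank one of the limiting transformation $T$ is automatic, so all the substantive work lies in choosing the data $(q_k, s_k)$.

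For the failure of minimal self-joinings, I would exploit the fact that any weak-$\ast$ subsequential limit $U$ of $T^{n_k}$ for $n_k\to\infty$ is a Markov operator commuting with $T$, and the formula $\int f\otimes g \intd\eta_U = \int f\cdot Ug \intd\mu$ defines a self-joining $\eta_U$ of $(X,\mu,T)$. If $U$ is neither the projection onto constants nor equal to any $T^n$, then $\eta_U$ is neither $\mu\otimes\mu$ nor an off-diagonal joining, and an ergodic component of $\eta_U$ witnesses non-MSJ. I would engineer the spacer sequence $(s_k)$ so that along some $m_k\to\infty$ the partial sums of spacers cause $T^{m_k}$ to map a controlled positive fraction of $\mathcal{T}_k$ back onto a shifted copy of itself, producing a tractable weak limit $U\neq P_0$, with the offset certifying $U\neq T^n$.

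The main obstacle, and the technical heart of the paper, is mild mixing. A function $f\in\ltwo(X,\mu)$ is rigid along $r_n\to\infty$ precisely when some subsequential weak limit $V$ of $T^{r_n}$ satisfies $Vf=f$; since $T$ is an isometry, this promotes automatically to the strong convergence $T^{r_n}f\to f$. To establish mild mixing, I would verify that every subsequential weak limit of $T^{n_k}$ for $n_k\to\infty$, including the $U$ constructed above, has only constants as fixed vectors. This translates into a combinatorial condition on the spacer partial sums: at every stage $k$, the alignment used to generate $U$ must concentrate on a proper sub-column of $\mathcal{T}_k$, leaving a definite positive fraction of the tower shifted off itself under $T^{m_k}$, and an analogous dispersion must hold for every other candidate $r_n$. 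Balancing the partial alignment needed for a non-trivial weak limit $U$ against the strict dispersion needed to prevent any non-constant rigid function is the chief technical tension of the construction, and I expect the delicate inductive choice of spacer schedule to be the hardest part of the argument.
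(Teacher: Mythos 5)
Your high-level blueprint mirrors the paper's: both obtain the extra joining as a weak-$\ast$ limit of off-diagonals and confront the tension between the partial alignment needed for that limit and the dispersion needed for mild mixing. The operator criterion you state (mild mixing iff every weak-operator limit point $V$ of $\{T^n\}_{n\to\infty}$ has only constants among its fixed vectors, and weak convergence $T^{r_n}f\rightharpoonup f$ upgrades to norm convergence by unitarity) is correct. However, there are two genuine gaps that your sketch does not close. First, in the mild-mixing step, the combinatorial condition you propose -- that at each stage the alignment ``leaves a definite positive fraction of the tower shifted off itself'' -- is necessary but nowhere near sufficient: a positive fraction shifted off does not rule out a non-constant $f$ with $Vf=f$, because $V$ can be a non-trivial convex combination (partial rigidity is in fact present in the paper's example, cf.\ the partial rigidity proposition). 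What actually kills rigid functions is a much finer structure: a positive-measure supply of nearby pairs $(x,y)$ with $T^{r(n)}x$ close to $T^{r(n)\pm 1}y$ (``buddies''), forcing any putative rigid $f$ to be approximately $T$-invariant and hence constant by ergodicity. Producing such pairs when $r(n)$ is at an ``unbounded'' scale requires the entire reduction machinery (\cref{prop:reduction}, \cref{prop:reduction multiples}) and the trick of passing to a bounded multiple $s\,r(n)$; your sketch contains no analogue of this, and it is precisely the point where a naive ``single-scale dispersion'' argument breaks.

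Second, in the non-MSJ step, showing $U\neq P_0$ and $U\neq T^n$ for all $n$ does not by itself produce a non-trivial \emph{ergodic} self-joining: $U$ could be a proper convex combination $\sum_j c_j T^{m_j}+c_0 P_0$, in which case every ergodic component of $\eta_U$ is again an off-diagonal or the product, and MSJ is not refuted. You need either to prove $\eta_U$ is itself ergodic, or to exclude such convex combinations. The paper handles this via \cref{prop:ce}, which constructs the limit joining as a Cauchy limit of off-diagonals along two interlaced time sequences $\alpha(k),\gamma(k)$ and proves the limit is ergodic directly (using the Kantorovich--Rubenstein hypothesis \ref{ce:kr}). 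So while your outline correctly locates the hard problems, it leaves both of them unsolved, and the proposed sufficient conditions, as stated, would not yield the theorem.
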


Ryzhikov~\cite{Ryzarx} has results connected to this theorem. On the one hand he showed that such an example can not be ``bounded'' rank 1. That is, bounded rank 1 mildly mixing systems have minimal self-joinings \cite{MR3235793}. 
On the other hand, he showed that for any $\epsilon>0$ there is a mildly mixing transformation without minimial self-joinings that has a ``local rank'' of $1-\epsilon$.

\subsection{Outline of proof}

We define in \cref{sec:system} a measure-preserving system $(X,\mu,T)$.
The following properties of the system are established in Sections \ref{sec:rank one}, \ref{sec:ergodic joining} and \ref{sec:mild mixing} respectively.

\begin{theorem}
\label{thm:rank one}
The system constructed in \cref{sec:system} is rank one.
\end{theorem}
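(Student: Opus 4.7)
The plan is to read the construction of $(X,\mu,T)$ in \cref{sec:system} as a cutting-and-stacking construction, so that at each stage $n$ it explicitly presents a Rokhlin tower with base $F_n$ and height $h_n$; then verify the definition of rank one by choosing $n$ sufficiently large in terms of $A$ and $\epsilon$. The pairwise disjointness of $F_n, TF_n, \ldots, T^{h_n-1}F_n$ should be immediate from the cut-and-stack structure, since each application of $T$ moves a level of the tower to the level directly above it, except at the top where spacers intervene.

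To verify the covering condition $\mu(F_n \cup TF_n \cup \cdots \cup T^{h_n-1} F_n) > 1 - \epsilon$, I would bound the proportion of mass assigned to spacers introduced at stages $\geq n$. Concretely, if $s_k$ denotes the total spacer mass added at stage $k$ and $m_n$ the tower mass at stage $n$ before any later spacers, one needs $\sum_{k \geq n} s_k / (m_n + \sum_{k \geq n} s_k) \to 0$ as $n \to \infty$. This estimate depends on the quantitative choices made in \cref{sec:system}, and is the step most closely tied to the specific parameters of the construction.

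For the approximation condition, the $\sigma$-algebras $\mathcal{T}_n$ generated by the levels $\{T^i F_n : 0 \leq i < h_n\}$ form an increasing sequence as the towers refine at each stage, and their union should be dense in the $\mu$-completion of the Borel $\sigma$-algebra: any basic set that enters the construction is a finite union of tower levels at some stage. The martingale convergence theorem applied to $\mathbb{E}[\mathbf{1}_A \mid \mathcal{T}_n]$ then produces an $A' \in \mathcal{T}_n$ with $\mu(A \symdiff A') < \epsilon$ for all sufficiently large $n$, and simultaneously enlarging $n$ so that the covering estimate holds completes the verification. I expect the main obstacle to be the covering estimate: since the construction is engineered simultaneously to achieve mild mixing and to fail minimal self-joinings, the spacer schedule is not free but must satisfy combinatorial constraints, and one must check that these constraints remain compatible with keeping the total tail spacer mass arbitrarily small.
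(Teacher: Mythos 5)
Your outline shares the paper's overall frame (exhibit a sequence of Rokhlin towers with bases $F_n$ and heights $h_n$, verify covering, verify that measurable sets are approximated by unions of levels), but it passes over the part of the argument that actually carries the weight: the construction in \cref{sec:system} is \emph{not} handed to you as a cutting-and-stacking construction. It is the induced map of an odometer on a set $X$ obtained by removing countably many cylinders $W_k$, and for every $k$ the removal of $W_k$ inserts a ``spacer'' somewhere in the $S|Y_{a(n)}$-orbit. A priori these spacers, which occur at arbitrarily high scales $k > a(n)$, could land inside the would-be tower over any naive base such as $[0^{a(n)}]\cap X$ and destroy both the disjointness and the covering property. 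So the statement that disjointness ``should be immediate from the cut-and-stack structure'' presumes a structure that has yet to be established. The key observation the paper uses, and which your proposal does not identify, is that every bounded-scale removal $W_k$ with $k>a(n)$ is contained in the single cylinder $[7^{a(n)}]$; accordingly one chooses the base $F_n$ inside the cylinder that \emph{immediately follows} $[7^{a(n)}]$ in the $S|Y_{a(n)}$-order and takes the tower up to the lexicographically largest cylinder of length $a(n)$ in $Y_{a(n)}$. This ensures the tower never re-enters $[7^{a(n)}]$, so the only spacers that can interrupt it come from the sparse unbounded-scale removals $W_{a(m)}$, $m>n$; those are handled by deleting the small set $\bad(a(n),h(n))$ from the base, and the measure estimates in the proof are exactly the bookkeeping for this deletion plus the cylinders preceding $F_n$. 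Without this choice of base, the estimate you call the ``main obstacle'' is not even well-posed, because the levels are not pairwise disjoint.

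A secondary issue: your approximation step invokes martingale convergence along $\sigma$-algebras $\mathcal{T}_n$ generated by the tower levels, asserting these are increasing because ``the towers refine at each stage.'' For the bases $F_n$ that make the argument work, the towers are anchored at different cylinders at each stage and there is no obvious nesting $\mathcal{T}_n \subset \mathcal{T}_{n+1}$, so this monotonicity is unjustified. It is also unnecessary: the paper simply approximates $A$ by a union of cylinders $C_1,\dots,C_t$ of length $a(n)$ and then takes $A'$ to be the union of those levels $T^i F_n$ that meet some $C_j$, using that each level is essentially a single length-$a(n)$ cylinder. This is the right approximation mechanism here and avoids any appeal to an increasing filtration.
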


\begin{theorem}
\label{thm:ergodic joining}
The system constructed in \cref{sec:system} does not have the minimal self-joinings property.
\end{theorem}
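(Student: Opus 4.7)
The plan is to produce an ergodic self-joining $\lambda$ of $(X,\mu,T)$ distinct from the product $\mu \otimes \mu$ and from every off-diagonal $(\id \times T^n)_* \mu$. The canonical device in a rank-one construction designed to fail MSJ is the graph joining of a non-trivial centralizer element, so I would seek a measure-preserving $S \colon X \to X$ with $ST = TS$ almost everywhere and $S \neq T^n$ for every $n \in \Z$, and set $\lambda = (\id \times S)_* \mu$. The properties of $\lambda$ needed then follow formally: $T \times T$-invariance from the commutation $ST = TS$, the marginal condition from $S$ being measure-preserving, ergodicity of $\lambda$ from ergodicity of $(X,\mu,T)$ (a standard consequence of rank one, and thus of \cref{thm:rank one}), and the failure $\lambda \neq (\id \times T^n)_* \mu$ from $S \neq T^n$.

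To build $S$, I would exploit the cutting-and-stacking description in \cref{sec:system}. At each stage $n$ one has an approximating tower whose columns and spacer pattern encode finer and finer structure of $T$. My plan is to define compatible stage-$n$ approximations $S_n$ that permute columns of the stage-$n$ tower in a manner dictated by whatever symmetry was built into the spacer sequence, and show these $S_n$ converge, say in the strong operator topology on the unitary group of $\ltwo(X,\mu)$, to a limit $S$. An equivalent formulation, often more transparent for verification, is to take $\lambda$ directly as a weak limit of off-diagonals $(\id \times T^{n_k})_* \mu$ along integers $n_k$ with $|n_k|\to\infty$ chosen so that $T^{n_k}$ traces out $S_k$ on the stage-$k$ tower; compactness of the space of couplings makes convergence along a subsequence automatic.

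The main obstacle is verifying non-triviality and identifying the limit. Ruling out $\lambda = (\id \times T^n)_* \mu$ reduces to $S \neq T^n$, which should be forced by the specific column-permutation pattern being distinct from any shift of levels. Ruling out $\lambda = \mu \otimes \mu$ is the delicate step: I would pin down $\lambda(A \times B)$ for a natural pair of measurable sets---unions of levels of a stage-$k$ tower are the obvious candidates---by direct computation in terms of the combinatorics of $S_k$, and show this value stays bounded away from $\mu(A) \mu(B)$. Ergodicity of $\lambda$ then follows once $\lambda$ has been identified with a graph joining of a genuine $S$. The hard part is not any single calculation but ensuring that the stage-$n$ approximations really do converge to a measurable $S$, which is presumably what governs the intricate choices in \cref{sec:system}.
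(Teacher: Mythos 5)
Your primary plan---build a measure-preserving $S$ commuting with $T$ with $S \neq T^n$ for every $n$, and take $\lambda = (\id \times S)_* \mu$---cannot succeed for this system, because a rank-one mild mixing transformation has trivial centralizer. King's weak closure theorem (cited in the introduction) says that any $S$ commuting with $T$ is a limit of powers $T^{n_k}$ in the group topology on automorphisms, which is strong Koopman convergence: for unitaries, weak operator convergence to a unitary limit upgrades automatically to strong convergence. If $S \neq T^n$ for all $n$ then necessarily $|n_k| \to \infty$, and strong convergence forces $\| T^{n_j - n_k} f - f \| = \| T^{n_j} f - T^{n_k} f \| \to 0$ as $j,k \to \infty$ for every $f \in \ltwo(X,\mu)$; extracting a diagonal subsequence produces a rigidity sequence for each $f$, contradicting mild mixing. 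The same argument rules out strong operator convergence of your stage-$n$ approximations $S_n$. This is not a gap you can fill: the $S$ you are building toward does not exist.

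Your fallback formulation---take $\lambda$ as a weak$^*$ limit of off-diagonals---is where the paper actually starts, but it is \emph{not} equivalent to the graph-joining approach: a weak$^*$ limit of $(\id \times T^{n_k})_* \mu$ need not be a graph joining, and here it cannot be. The paper works with two interleaved sequences $\alpha(k) = \P(k) + \gamma(k-1)$ and $\gamma(k) = -\P(k) + \alpha(k-1)$ derived from the removals at the unbounded scales, and with the partition of $X$ into sets $A(k), B(k)$ of measure roughly one half furnished by \cref{prop:joining_prep}, on which $T^{\alpha(k)} \approx T^{\gamma(k-1)}$, $T^{\gamma(k)} \approx T^{\alpha(k-1)}$ and $T^{\alpha(k)} \approx T^{\alpha(k-1)}$, $T^{\gamma(k)} \approx T^{\gamma(k-1)}$ respectively. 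Existence and ergodicity of the common limit $\lambda$ of $\nu_{\alpha(k)}$ and $\nu_{\gamma(k)}$ are then obtained via the Kantorovich--Rubinstein Cauchy argument of \cref{prop:ce}, not by identifying a commuting map. Your closing step, that ergodicity ``follows once $\lambda$ has been identified with a graph joining of a genuine $S$,'' is exactly the step that must be replaced, because no such identification is possible for this joining; supplying ergodicity without a commuting map is the real technical content that your proposal omits.
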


\begin{theorem}
\label{thm:mild mixing}
The system constructed in \cref{sec:system} is mild mixing.
\end{theorem}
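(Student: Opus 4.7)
The plan is to argue by contradiction: assume some non-constant $f \in \ltwo(X,\mu)$ is rigid along a sequence $\textbf{\textit{r}}$ with $|r(n)| \to \infty$, and derive a contradiction from the specific cutting-and-stacking parameters of the construction in \cref{sec:system}. Using \cref{thm:rank one}, every $L^2$ function is approximated by functions that are constant on each level of the stage-$k$ Rokhlin tower, so a standard triangle-inequality and density argument reduces the problem to showing that for every sufficiently fine stage $k$ and every finite union $A$ of levels of the stage-$k$ tower that is neither negligible nor co-negligible, one has
\[
\limsup_{n \to \infty} \mu(T^{r(n)} A \symdiff A) > 0.
\]

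The core analysis is combinatorial. For each $n$ and each large $k$, decompose $r(n)$ according to the way $T^{r(n)}$ shifts a typical point through the stage-$k$ tower: write $r(n) = q_{n,k} h_k + s_{n,k}$ where $h_k$ is the stage-$k$ tower height and the ``offset'' $s_{n,k}$ records the cumulative spacer contribution collected during the $q_{n,k}$ passes through the tower top. Rigidity of $\mathbf{1}_A$ forces, for every large $k$, a definite proportion of orbits to realign within the same stage-$k$ level, which constrains $(q_{n,k}, s_{n,k})$ to lie in a narrow arithmetic set dictated by the spacer blocks chosen in \cref{sec:system}. The spacer sequence is designed so that these narrow sets for successive stages $k$ and $k+1$ are incompatible except for the trivial solution $r=0$: any $r(n)$ surviving the stage-$k$ constraint is forced by the stage-$(k+1)$ spacer profile to move a fixed fraction of mass across tower levels. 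Iterating across stages pushes $\mu(T^{r(n)}A \symdiff A)$ away from $0$, contradicting rigidity.

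The main obstacle is to reconcile this argument with \cref{thm:ergodic joining}: the construction admits a non-product ergodic self-joining, so some sequences $r(n) \to \infty$ \emph{must} produce non-trivial off-diagonal correlations between $T^{r(n)}$ and $T$. The delicate point is that such correlations are consistent with rigidity only if they preserve $\ltwo$ approximants of \emph{arbitrary} measurable sets, whereas the exotic joining arises from coherence only along very special geometric patterns (those corresponding to the shears witnessing \cref{thm:ergodic joining}), which do not uniformly approximate general sets. One will therefore need to track which sequences contribute to the non-trivial joining and verify that, on the subspace of ordinary level-indicators, these sequences still produce a $\ltwo$-distance bounded below. This quantitative separation between ``joining-producing'' sequences and ``rigidity sequences'' will be the heart of the proof, and is where the precise numerics of the spacer heights in \cref{sec:system} must be invoked.
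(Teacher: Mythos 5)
Your outline gestures at a standard template for proving mild mixing of a rank one transformation, but it does not engage with the specific obstacles that the paper was built around, and its central claim is false for this construction.

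The key assertion in your second paragraph -- that the spacer sequence forces the arithmetic constraints on $(q_{n,k}, s_{n,k})$ at successive stages to be ``incompatible except for the trivial solution $r = 0$'' -- cannot hold here, because the system is \emph{partially rigid}: as recorded in Section~2, for every large $n$ at least a $1/9$ proportion of the space satisfies $\metric(T^{\p(n)}x,x) < \epsilon$. So there are plenty of non-zero $r$ (the $\p(n)$'s, and more generally short tower expansions) for which the stage-$k$ constraint is genuinely satisfied on a definite fraction of the space, and this persists across stages. What partial rigidity forces is that no single stage-$k$ ``realignment'' argument can push $\mu(T^{r(n)}A \symdiff A)$ away from zero uniformly; one has to make do with a positive proportion of the space behaving well, not all of it. Your proposal never confronts this.

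The harder and more specific gap is what to do when $r(n)$ has its leading term at an unbounded scale $a(j)$. At those scales the set $W_{a(j)}$ consists of on the order of $(d(j)+1)\Q(j)$ cylinders rather than a single one, so the decomposition $r(n) = q_{n,k} h_k + s_{n,k}$ has an $s_{n,k}$ that is not determined by a single spacer profile: roughly half the base goes up a tower of height $\P(j)$ and the other half a tower of height $\P(j) - \Q(j)$, and which you see depends on the digit $x(a(j))$. This is exactly the structure that makes the non-trivial joining exist. The paper's mechanism for handling it is the reduction procedure of Section~5.3--5.5: partition the space by the digit at $\scl(r)$, replace $r$ by $\lft(r)$ or $\rht(r)$ on the respective pieces, iterate down to a fixed target unbounded scale $\tau$, and --- crucially --- when the bookkeeping leaves you at a scale where you cannot ``pick up invariance'', replace $r(n)$ by a multiple $6d(\beta)r(n)$ and reduce again (Proposition~\ref{prop:reduction multiples}). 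The paper proves that at least one of $r(n)$ and $6d(\beta)r(n)$ leaves a $10^{-50}$ proportion of the space in the ``good'' bucket $A^{(I)}_0$, which by \ref{cond:Azero friends} produces buddies and hence a lower bound on $\int|f - f\circ T^{r(n)}|\,d\mu$ via Corollary~\ref{cor:end}. Your proposal contains no analogue of the buddy/friendly-pair criterion (Theorem~\ref{thm:getting_constant weak}), no reduction to smaller scales, and no use of a multiple of $r(n)$, which the introduction identifies as indispensable. As written it is a plan, not a proof, and the plan's keystone claim fails.
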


Our system will be the the induced transformation of an odometer on an explicit subset obtained by removing a countable number cylinders.
First consider the odometer
\[
\Omega = \prod_{i \in \N} \{0, \dots, 9 \}
\]
on which $S : \Omega \to \Omega$ is the usual ``addition with carry''.
The product of the uniform measures on $\{0,\dots,9\}$, denoted $\nu$, is $S$-invariant and the system $(\Omega, \nu, S)$ is ergodic.
Put
\[
W_i = \{ x \in \Omega : x(1) = 7,\dots, x(i-1) = 7, x(i) = 8 \}
\]
for all $i \ge 2$ and $W_1 = \{ x \in \Omega : x(1) = 8 \}$.
The closed subset
\[
X = \Omega \bigg\backslash \bigcup_{i \in \N} W_i
\]
has positive measure and we can therefore consider the induced system $(X,\mu,T)$ where $\mu$ is the normalization of the restriction of $\nu$ to $X$ and $T$ is the first return map defined by
\[
T(x) = S^{\min \{ n \in \N : S^n(x) \in X\}}(x)
\]
for (in this case) all $x \in X$.

The removal of the cylinder sets $W_i$ dramatically alters the dynamical properties of the system.
While $(\Omega,\nu,S)$ is rigid, the system $(X,\mu,T)$ is mild mixing.
This is due to the abundance for any large $n \in \N$ of pairs of nearby points $x,y$ with the property that $T^n(x)$ and $T^{n+1}(y)$ are also nearby.
For example, if $n = 10^i$ and $x(i) = 8$ and $y(i) = 9$ and $x(j) = y(j)$ for all $j < i$ and $(x(1),\dots,x(i-1))$ precedes $(7,\dots,7)$ lexicographically in the collection of cylinders that intersect $X$ then
\begin{itemize}
\item
$x$ and $y$ are close
\item
there will be $1 \le k \le n$ with $S^k(x) \in W_i$
\item
for no $1 \le k \le n$ will $S^k(y)$ belong to any $W_j$
\end{itemize}
so $T^n(x)$ will be close to $T^{n+1}(y)$.
One can use such points to show that every rigid function is approximately $T$-invariant and therefore constant by ergodicity.
For more general $n$ the decimal expansion of $n$ dictates the scale $i$ for which the removed cylinder $W_i$ can be used to ``pick up invariance'' in the above sense. 

It is likely, using an argument analogous to that of \cite{MR2129107} for showing that linear recurrent 3-IETs have minimal self-joinings, that pairs of points with the above behaviour can be used to prove the system $(X,\mu,T)$ has the much stronger property of minimal self-joinings.

To produce a system that does not have minimal self-joinings we build on a method from \cite{MR4269425} for producing ergodic joinings that are neither one of the off-diagonal joinings nor the product measure.
To do so we modify $\Omega$ to be of the form
\[
\Omega = \prod_{i \in \N} \{ 0,\dots,c(i) \}
\]
where $c(i) = 9$ unless $i = 2^{j+5}$ in which case $c(i) = 2^{j+6} - 1$; and by modifying $W_i$ when $i = 2^{j+5}$ to consist of many more cylinders.
The formal definition is in \cref{sec:system}.

In modifying the odometer to allow the removal of many cylinders at certain scales we are able to find for each $k \in \N$ sets $A_k$ and $B_k$  of measure about 0.5 and times $\p(k)$, $\q(k)$ with
\begin{equation}
\label{intro:reduction}
T^{\p(k)} \approx I \textup{ on } A_k
\textup{ and }
T^{\p(k)} \approx T^{\q(k)} \textup{ on } B_k
\end{equation}
and the relation $q_{k+1} = 2p_k - q_k$.
We use these properties to build a non-trivial joining in \cref{sec:ergodic joining}.
See in particular the paragraph after \cref{prop:ce}, for how these properties facilitate building the joining.

Having modified the sets $W_i$ it is no longer clear that the resulting system $(X,\mu,T)$ is mild mixing; the bulk of our effort is in verifying that property.
We wish to pursue the same strategy as described above, but the decimal expansion of a time $n$ may no longer be relevant.
Instead one must expand
\begin{equation}
\label{intro:expansion}
n = e(\scl(n)) \p(\scl(n)) + e(\scl(n) - 1) \p(\scl(n) - 1) + \cdots + e(1) \p(1)
\end{equation}
where $k \mapsto \p(k)$ is a specific sequence of quickly growing return times for our system $(X,\mu,T)$.

When the scale $\scl(n)$ of the expansion \eqref{intro:expansion} is not of the form $2^{j+5}$ we can again ``pick up invariance'' using the single removed cylinder $W_{\scl(n)}$.
However, if $\scl(n)$ is of the form $2^{j+5}$ we cannot take this approach: the set $W_{\scl(n)}$ that we remove consists in this case of too many cylinders to run that argument.
To overcome this difficulty we appeal to the relations \eqref{intro:reduction} which will enable us to ``reduce'' our attention to $n - e(\scl(n)) \p(\scl(n))$ and $n - (\p(\scl(n)) - \q(\scl(n))) e(\scl(n))$ on $A_{\scl(n)}$ and $B_{\scl(n)}$ respectively.
A single application of this reduction procedure may not suffice, as these reduced times may themselves not have expansions at scales where one can pick up invariance.
Moreover, even after several iterations of this reduction procedure we might not be in a position to pick up invariance.
To overcome this difficulty we consider also the reductions of a multiple $sn$ of $n$.
Our main technical result -- \cref{prop:reduction} -- keeps track of what happens as we apply the reduction procedure to $n$ and $sn$ simultaneously. The upshot is that the reduction of $sn$ is roughly equal to $s$ multiplied by the reduction of $n$. Fixing in advance a relatively small scale compared to that of $n$ we choose $s$ large enough that $s$ multiplied by the reduction of $n$ must ``overflow'' and have an expansion at which one can again pick up invariance.

This approach is quite technical, and it is natural to wonder whether the careful bookeeping is necessary.
Two facts suggest this care is important.
The first fact is the  real possibility of ``secret'' rigidity times.
Such ``secret'' times are present in exchanges of three intervals -- see for instance Case~2 in the proof of \cite[Theorem~5.1]{MR2129107} -- and transformations such as Katok's map -- see \cite[Section~1.4.3]{MR1436950}.
The second fact is that there are some natural candidates for rigidity sequences for our transformation: we build our joining by showing that there is a sequence $n_1,n_2,...$ in $\mathbb{Z}$ so that the corresponding sequence of off-diagonal joinings is Cauchy in the weak$^*$ topology; it is then natural to wonder if say, ${n_{i+1}-n_i}$ is a rigidity sequence. 
It is an open question of King~\cite[Page~382]{MR0863200} whether rank one transformations always have that $\{(id \times T^n)_*\mu\}_{n \in \mathbb{Z}}$ 
is weakly dense in the set of ergodic self-joinings.

We hope that our strategy will help to further differentiate mildly mixing rank one transformations from mixing rank one transformations. We comment further on this in \cref{rem:rank two}.

\subsection{Outline of paper}
In \cref{sec:system} we define our system $(X,\mu,T)$ and introduce the numeration system relevant to it. Additionally, we prove some basic properties of the system including that it is partially rigid and weakly mixing.
We prove in \cref{sec:rank one} and \cref{sec:ergodic joining} that our system is rank one and has a non-trivial ergodic joining, respectively. 
The most involved argument -- that $(X,\mu,T)$ is mild mixing -- is in \cref{sec:mild mixing}. We include two appendices: one proves a minor modification of work in \cite{MR4269425} needed in \cref{sec:ergodic joining} and the other lists notation used throughout the paper.

\subsection{Acknowledgments:} We thank Mariusz Lemanczyk for bringing this question to our attention and a helpful conversation. JC was partially supported by NSF grants DMS-2055354 and DMS-2350393.

\section{Building the system}
\label{sec:system}

In this section we define our measure-preserving system $(X,\mu,T)$ from an odometer $(\Omega,\nu,S)$ by taking $X$ to be a subset of $\Omega$ and $T$ to be the induced transformation and $\mu$ to be the normalized restriction of $\nu$ to $X$.
In fact $X$ will be $\Omega$ less a countable union of odometer cylinders and thus a closed subset of $\Omega$.

\subsection{An odometer}

Define
\begin{align*}
d(j) &{} = 2^{j+5} \\
a(j) &{} = 2^{j+5}
\end{align*}
for all $n \in \N$.
Define
\[
c(n)
=
\begin{cases}
9 & n \notin \{ a(j) : j \in \N \} \\
2 d(j) + 1 & n = a(j)
\end{cases}
\]
for all $n \in \N$ and put
\[
\Omega = \prod_{n \in \N} \{0,\dots,c(n) \}
\]
with the product topology.

Given $x,y \in \Omega$ and $k \in \N$ write
\[
x \stackrel{k}{=} y
\]
when $x(i) = y(i)$ for all $1 \le i \le k$.

For concreteness, define a metric on $\Omega$ by 
\[
\metric(x,y)
=
\begin{cases}
0 & \text{ if }x=y \\
2^{-j} & \text{ where } j = \min\{i:x(i)\neq y(i)\}
\end{cases}
\]
for all $x,y \in \Omega$.
Define $S : \Omega \to \Omega$ as follows: put $S(c) = 0$ and, for every $x \ne c$ define
\[
S(x(1),x(2),\dots) = (0,\dots,0,x(j) + 1,x(j+1),\dots)
\]
where $j = \min \{ k \in \N : x(k) < c(k) \}$.
The map $S$ is a homeomorphism of $\Omega$ and preserves the product Borel measure
\[
\nu = \bigotimes_{k \in \N} \frac{1}{c(k) + 1} ( \delta_0 + \cdots + \delta_{c(k)} )
\]
on $\Omega$.

By a \define{cylinder} we mean any subset of $\Omega$ of the form
\[
\{ x \in \Omega : x(i(1)) = b(1),\dots,x(i(k)) = b(k) \}
\]
for some $k \in \N$ and values $0 \le b(j) \le c(i(j))$ for each $1 \le j \le k$. We call $i(1),\dots,i(k)$ the \define{defining indices} of the cylinder.
The \define{length} of a cylinder is the numbers of coordinates it fixes.
We sometimes write $|C|$ for the length of the cylinder $C$.
We use the notation
\begin{equation}
\label{eqn:cylinder}
[b(1) b(2) \cdots b(k)]
=
\{ x \in \Omega : x(1) = b(1) \bmod c(1),\dots,x(k) = b(k) \bmod c(k) \}
\end{equation}
given prescribed values $b(1), \dots, b(k) \in \N \cup \{0\}$ for cylinders defined by the first $k$ coordinates.
We for example write $[0^k]$ for the cylinder $[00 \cdots 0]$ of length $k$ and $[7^{k-1} 8]$ for the cylinder $[77 \cdots 78]$ of length $k$.

We sometimes refer to the index $n \in \N$ defining an odometer digit as a \define{scale}. When $n$ belongs to $\{ a(j) : j \in \N \}$ we call $n$ an \define{unbounded} scale; otherwise $n$ is a \define{bounded} scale.
For any index $n \ge 2$ we picture the odomoter as a collection of $t(n)$ cylinders arranged as in Figure~\ref{fig:odometerTower} mapped to one another by the transformation $S$.

For each $s \in \N$ define $\lambda(s)$ to be the largest unbounded scale strictly smaller than $s$.
Thus $\lambda(a(j)+1) = a(j)$ and $\lambda(a(j)) = a(j-1)$.
For example $\lambda(2^{j+5}) = 2^{j+4}$.
We will sometimes iterate $\lambda$ so that $\lambda^2(a(j)+1) = a(j-1)$.
For example $\lambda^3(2^{j+5} + 6) = 2^{j+3}$.

Put
\[
t(n) = \prod_{j=1}^n 1 + c(j)
\]
for each $n \in \N$.
Note that $t(n)$ is the minimal $m \in \N$ with $S^m[0^{n+1}]=[0^n 1]$.

\begin{figure}[t]
\centering
\begin{tikzpicture}[yscale=0.5, line width=0.5mm]

\begin{scope}[shift={(-2,0)}]
\node at (0,4) {$c(1) c(2) \cdots c(n-1)$};
\node at (0,3.2) {$\vdots$};
\node at (0,2) {$20 \cdots 0$};
\node at (0,1) {$10 \cdots 0$};
\node at (0,0) {$00 \cdots 0$};
\end{scope}

\begin{scope}
\draw (0,4) -- (1,4);
\node at (0.5,3.2) {$\vdots$};
\draw (0,2) -- (1,2);
\draw (0,1) -- (1,1);
\draw (0,0) -- (1,0);
\node at (0.5,-1) {$0$};
\end{scope}

\begin{scope}[shift={(1.5,0)}]
\draw (0,4) -- (1,4);
\node at (0.5,3.2) {$\vdots$};
\draw (0,2) -- (1,2);
\draw (0,1) -- (1,1);
\draw (0,0) -- (1,0);
\node at (0.5,-1) {$1$};
\end{scope}

\begin{scope}[shift={(3,0)}]
\node at (0.5,3) {$\cdots$};
\end{scope}

\begin{scope}[shift={(4.5,0)}]
\draw (0,4) -- (1,4);
\node at (0.5,3.2) {$\vdots$};
\draw (0,2) -- (1,2);
\draw (0,1) -- (1,1);
\draw (0,0) -- (1,0);
\node at (0.5,-1) {$c(n)$};
\end{scope}
\end{tikzpicture}

\caption{A partition of an $\Omega$ into cylinders (drawn schematically as intervals) of length $n$. Rows are labeled by the first $n - 1$ coordinates and columns are labeled by the $n$th coordinate. The transformation $S$ maps a cylinder to the one above, with the cylinder at the top of each column mapped to the bottom cylinder of the tower to the right, and the top right cylinder mapped to the bottom left cylinder.}
\label{fig:odometerTower}
\end{figure}
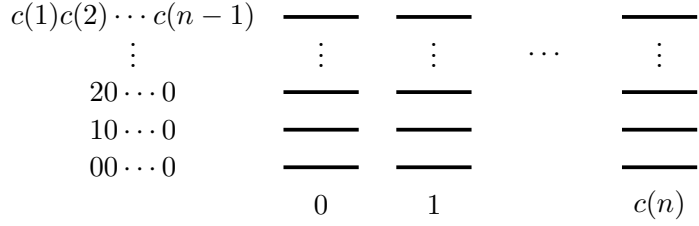

\subsection{Inductive removals to define the system}
\label{subsec:removals}

For each $k \in \N$ we define a union $W_k$ of cylinders of length $k$ that will be removed from $\Omega$ to form our space $X$.
When $k \notin \{ a(n) : n \in \N \}$ the set $W_k$ will consist of the single cylinder $[7^{k-1}8]$ of length $k$.
When $k$ is equal to some $a(n)$ the set $W_k$ will consist of a large number of cylinders of length $k$ defined by an inductive procedure.
The formal construction follows.

Put $Y_0 = \Omega$.
Put $W_1 = [8]$ and for each $1 < k < a(1)$ define
\[
W_k
=
\{ x \in \Omega : x(i) = 7 \textup{ for all } 1 \le i < k \textup{ and } x(k) = 8 \}
=
[7^{k-1}8]
\]
so that $\nu(W_k) = 1/t(k)$ for all $1 \le k < a(1)$.
Write
\[
Y_k = \Omega \setminus ( W_1 \cup \cdots \cup W_k )
\]
and let $S|Y_k$ be the induced transformation on $Y_k$ for all $1 \le k < a(1)$.
See Figure~\ref{fig:boundedRemoval} for a schematic.
For each $1 \le k < a(1)$ define $\q(k) = 0$ and $\p(k)$ to be the minimal $m \in \N$ with $(S|Y_k)^m [0^{k-1}0] = [0^{k-1}1]$.
For example $\p(1) = 1$ and $\p(2) = 9$ and $\p(3) = 89$.

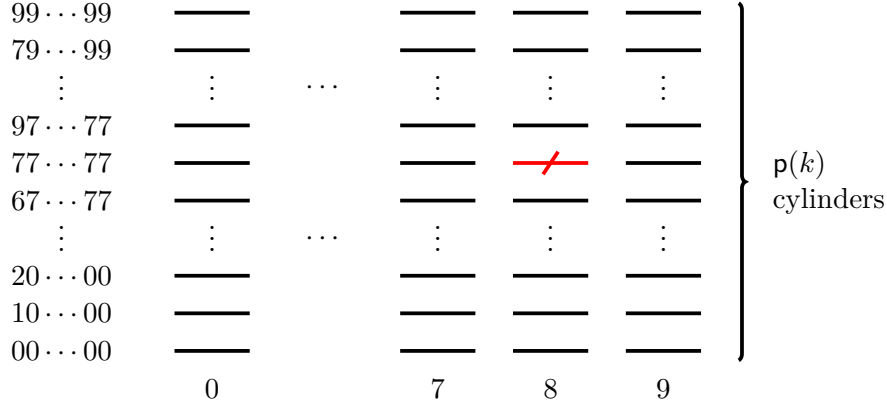
\begin{figure}[t]
\centering
\begin{tikzpicture}[yscale=0.5, line width=0.5mm]

\begin{scope}[shift={(-1.5,0)}]
\node at (0,9) {$99 \cdots 99$};
\node at (0,8) {$79 \cdots 99$};
\node at (0,7.2) {$\vdots$};
\node at (0,6) {$97 \cdots 77$};
\node at (0,5) {$77 \cdots 77$};
\node at (0,4) {$67 \cdots 77$};
\node at (0,3.2) {$\vdots$};
\node at (0,2) {$20 \cdots 00$};
\node at (0,1) {$10 \cdots 00$};
\node at (0,0) {$00 \cdots 00$};
\end{scope}

\begin{scope}
\draw (0,9) -- (1,9);
\draw (0,8) -- (1,8);
\node at (0.5,7.2) {$\vdots$};
\draw (0,6) -- (1,6);
\draw (0,5) -- (1,5);
\draw (0,4) -- (1,4);
\node at (0.5,3.2) {$\vdots$};
\draw (0,2) -- (1,2);
\draw (0,1) -- (1,1);
\draw (0,0) -- (1,0);
\node at (0.5,-1) {$0$};
\end{scope}

\begin{scope}[shift={(1.5,0)}]
\node at (0.5,7) {$\cdots$};
\node at (0.5,3) {$\cdots$};
\end{scope}

\begin{scope}[shift={(3,0)}]
\draw (0,9) -- (1,9);
\draw (0,8) -- (1,8);
\node at (0.5,7.2) {$\vdots$};
\draw (0,6) -- (1,6);
\draw (0,5) -- (1,5);
\draw (0,4) -- (1,4);
\node at (0.5,3.2) {$\vdots$};
\draw (0,2) -- (1,2);
\draw (0,1) -- (1,1);
\draw (0,0) -- (1,0);
\node at (0.5,-1) {$7$};
\end{scope}

\begin{scope}[shift={(4.5,0)}]
\draw (0,9) -- (1,9);
\draw (0,8) -- (1,8);
\node at (0.5,7.2) {$\vdots$};
\draw (0,6) -- (1,6);
\draw[color=red] (0,5) -- (1,5);
\draw[color=red] (0.4,4.7) -- (0.6,5.3);
\draw (0,4) -- (1,4);
\node at (0.5,3.2) {$\vdots$};
\draw (0,2) -- (1,2);
\draw (0,1) -- (1,1);
\draw (0,0) -- (1,0);
\node at (0.5,-1) {$8$};
\end{scope}

\begin{scope}[shift={(6,0)}]
\draw (0,9) -- (1,9);
\draw (0,8) -- (1,8);
\node at (0.5,7.2) {$\vdots$};
\draw (0,6) -- (1,6);
\draw (0,5) -- (1,5);
\draw (0,4) -- (1,4);
\node at (0.5,3.2) {$\vdots$};
\draw (0,2) -- (1,2);
\draw (0,1) -- (1,1);
\draw (0,0) -- (1,0);
\node at (0.5,-1) {$9$};
\end{scope}

\begin{scope}[shift={(7.5,0)}]
\draw [decorate, decoration = {brace}] (0,9.25) --  (0,-0.25);
\node[align=left] at (1.2,4.5) {$\p(k)$\\cylinders};
\end{scope}

\end{tikzpicture}

\caption{The picture of $Y_{k-1}$ at scale $k$ for some $1 < k < a(1)$. The marked cylinder is $W_k$. Note that the rows labeled $89 \cdots 99$ and $87 \cdots 77$ are not shown as they were removed via $W_1$.}
\label{fig:boundedRemoval}

\end{figure}

Put $\Q(1) = \q(a(1)) = 1$ and define
\[
W_{a(1)}
=
\bigcup_{j=1}^{\Q(1)}
\{ x \in Y_{a(1)-1} : x(a(1)) \ge d(1) + 1 \textup{ and } (S|Y_{a(1)-1})^j(x) \in [0^{a(1) -1}]\} = \bigcup_{i=d(1)+1}^{2d(1)+1} [9^{a(1)-1} i]
\] 
cf.\ Figure~\ref{fig:firstBigRemoval}.
We have $\nu(W_{a(1)}) = (d(1) + 1) \Q(1) /t(a(1))$.
Put
\[
Y_{a(1)} = \Omega \setminus ( W_1 \cup \cdots \cup W_{a(1)} )
\]
and let $S|Y_{a(1)}$ be the transformation induced on $Y_{a(1)}$ by $S$.
Analogous to what we did before, define $\p(a(n))$ to be the minimal $m \in \N$ with $(S|Y_{a(1)})^m [0^{a(1)-1}0] = [0^{a(1) - 1} 1]$.
Write $\P(1) = \p(a(n))$.

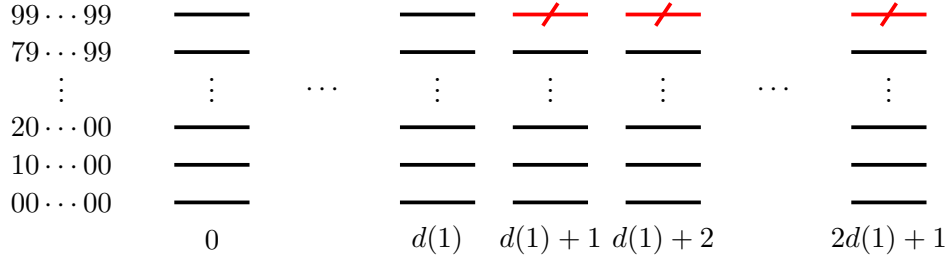
\begin{figure}[t]
\centering
\begin{tikzpicture}[yscale=0.5, line width=0.5mm]

\begin{scope}[shift={(-1.5,0)}]
\node at (0,5) {$99 \cdots 99$};
\node at (0,4) {$79 \cdots 99$};
\node at (0,3.2) {$\vdots$};
\node at (0,2) {$20 \cdots 00$};
\node at (0,1) {$10 \cdots 00$};
\node at (0,0) {$00 \cdots 00$};
\end{scope}

\begin{scope}
\draw (0,5) -- (1,5);
\draw (0,4) -- (1,4);
\node at (0.5,3.2) {$\vdots$};
\draw (0,2) -- (1,2);
\draw (0,1) -- (1,1);
\draw (0,0) -- (1,0);
\node at (0.5,-1) {$0$};
\end{scope}

\begin{scope}[shift={(1.5,0)}]
\node at (0.5,3) {$\cdots$};
\end{scope}

\begin{scope}[shift={(3,0)}]
\draw (0,5) -- (1,5);
\draw (0,4) -- (1,4);
\node at (0.5,3.2) {$\vdots$};
\draw (0,2) -- (1,2);
\draw (0,1) -- (1,1);
\draw (0,0) -- (1,0);
\node at (0.5,-1) {$d(1)$};
\end{scope}

\begin{scope}[shift={(4.5,0)}]
\draw[color=red] (0,5) -- (1,5);
\draw[color=red] (0.4,4.7) -- (0.6,5.3);
\draw (0,4) -- (1,4);
\node at (0.5,3.2) {$\vdots$};
\draw (0,2) -- (1,2);
\draw (0,1) -- (1,1);
\draw (0,0) -- (1,0);
\node at (0.5,-1) {$d(1)+1$};
\end{scope}

\begin{scope}[shift={(6,0)}]
\draw[color=red] (0,5) -- (1,5);
\draw[color=red] (0.4,4.7) -- (0.6,5.3);
\draw (0,4) -- (1,4);
\node at (0.5,3.2) {$\vdots$};
\draw (0,2) -- (1,2);
\draw (0,1) -- (1,1);
\draw (0,0) -- (1,0);
\node at (0.5,-1) {$d(1)+2$};
\end{scope}

\begin{scope}[shift={(7.5,0)}]
\node at (0.5,3) {$\cdots$};
\end{scope}

\begin{scope}[shift={(9,0)}]
\draw[color=red] (0,5) -- (1,5);
\draw[color=red] (0.4,4.7) -- (0.6,5.3);
\draw (0,4) -- (1,4);
\node at (0.5,3.2) {$\vdots$};
\draw (0,2) -- (1,2);
\draw (0,1) -- (1,1);
\draw (0,0) -- (1,0);
\node at (0.5,-1) {$2d(1)+1$};
\end{scope}

\end{tikzpicture}

\caption{The picture of $Y_{a(1)-1}$ at scale $a(1)$. The marked cylinders constitute $W_{a(1)}$. Note that the row labeled $89 \cdots 99$ is not shown as it was removed via $W_1$.}
\label{fig:firstBigRemoval}

\end{figure}

We now proceed inductively, supposing that subsets $W_1,\dots,W_{a(n)}$ of $\Omega$ and $\p(1),\dots,\p(a(n))$ in $\N$ and $\q(1),\dots,\q(a(n))$ in $\N$ have been defined, with $Y_i = \Omega \setminus ( W_1 \cup \cdots \cup W_i )$ for all $1 < i \le a(n)$ and $\P(i) = \p(a(i))$ and $\Q(i) = \q(a(i))$ for all $1 \le i \le n$.
Define
\[
W_k
=
\{ x \in \Omega : x(k) = 7 \textup{ for all } 1 \le i < k \textup{ and } x(k) = 8 \}
=
[7^{k-1}8]
\]
for each $a(n) < k < a(n+1)$.
Note that $W_k$ was not removed at any earlier stage.
We have
\begin{equation}
\label{eqn:meas_bounded_removal}
\nu(W_k) = \frac{1}{t(k)}
\end{equation}
for all $a(n) < k < a(n+1)$.
Let
\[
Y_k
=
\Omega \setminus ( W_1 \cup \cdots \cup W_k )
=
Y_{a(n)} \setminus ( W_{a(n)+1} \cup \cdots \cup W_k )
\]
for all $a(n) < k < a(n+1)$.
Define $\p(k)$ to be the minimal $m \in \N$ with
\[
(S|Y_k)^m [0^{k-1}0] = [0^{k-1}1]
\]
and define $\q(k) = 0$ for all $a(n) < k < a(n+1)$. 

Next, let $\Q(n+1) = \q(a(n+1)) = 2 \P(n) - \Q(n)$ and put
\[
W_{a(n+1)}
=
\bigcup_{j=1}^{\Q(n+1)}
\left\{
x \in Y_{a(n+1)-1}
:
x(a(n+1)) \ge d(n+1) + 1
\textup{ and } (S|Y_{a(n+1)-1})^j(x) \in [0^{a(n+1)-1}]
\right\}
\]
which is shown in Figure~\ref{fig:unbounded_removal}.
We have
\begin{equation}
\label{eqn:meas_unbounded_removal}
\nu(W_{a(n+1)}) = \frac{(d(n+1)+1) \Q(n+1)}{t(a(n+1))}
\end{equation}

Let
\[
Y_{a(n+1)} = \Omega \setminus (W_1 \cup \cdots \cup W_{a(n+1)} )
\]
and define $\p(a(n)+1) \in \N$ to be minimal satisfying
\[
(S|Y_{a(n+1)})^{\p(a(n+1))}[0^{a(n+1)-1} 0] = [0^{a(n+1) - 1}1]
\]
and put $\P(n+1) = \p(a(n+1))$ to complete the inductive construction.
It is immediate from Figures~\ref{fig:boundedRemoval} and \ref{fig:unbounded_removal} that
\begin{equation}
\label{eqn:pn_bound}
(c(n) + 1) \p(n) \le t(n)
\end{equation}
for all $n \in \N$  and in particular, $t(n)$ is the number of levels of $Y$ at stage $n$ and $p(n)$ is morally the number of levels of $X$ at $p(n-1)$.

\begin{lemma}
\label{lem:removed_mass}
$\displaystyle\sum\limits_{k \in \N} \nu(W_k) < \frac{1}{8}$
\end{lemma}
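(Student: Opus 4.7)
The plan is to split the sum $\sum_k \nu(W_k)$ according to whether $k$ is a bounded or an unbounded scale, and to use the rapid growth of the tower heights $t(k)$ to control each piece.

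For bounded scales, equation \eqref{eqn:meas_bounded_removal} gives $\nu(W_k) = 1/t(k)$. Since $c(i) + 1 \ge 10$ for every $i \in \N$ (the smallest values of $c(i)+1$ occur at bounded scales where $c(i) = 9$, and the unbounded scales only make $t$ grow faster), we get $t(k) \ge 10^k$. Summing,
\[
\sum_{k \notin \{a(j) : j \in \N\}} \nu(W_k) \le \sum_{k=1}^{\infty} 10^{-k} = \frac{1}{9}.
\]

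For unbounded scales, equation \eqref{eqn:meas_unbounded_removal} gives $\nu(W_{a(n)}) = (d(n)+1)\Q(n)/t(a(n))$, so we need an a priori bound on $\Q(n)$. Here the plan is to use the recursion $\Q(n) = 2\P(n-1) - \Q(n-1) \le 2\P(n-1)$ (for $n \ge 2$) together with the crude estimate $\P(n-1) = \p(a(n-1)) \le t(a(n-1)-1)$. The latter holds because each step of $S|Y_{a(n-1)}$ is at least one step of $S$, while $t(a(n-1)-1)$ is exactly the number of $S$-steps required to travel from $[0^{a(n-1)-1}0]$ to $[0^{a(n-1)-1}1]$. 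Using $t(a(n-1)) = (2d(n-1)+2)\, t(a(n-1)-1)$, this yields $\Q(n) \le t(a(n-1))/(d(n-1)+1)$, and hence
\[
\nu(W_{a(n)}) \le \frac{d(n)+1}{d(n-1)+1} \cdot \frac{t(a(n-1))}{t(a(n))}.
\]
The ratio $t(a(n))/t(a(n-1))$ contains the factor $10^{a(n)-a(n-1)-1} = 10^{2^{n+4}-1}$, making each $\nu(W_{a(n)})$ microscopic. A direct computation shows $\nu(W_{a(1)}) = 65/(130 \cdot 10^{63})$, and the remaining terms are geometrically dominated by this, so the entire unbounded contribution is bounded by something much smaller than $1/72 = 1/8 - 1/9$.

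Combining the two estimates yields the claim. The only place that requires any real care is the a priori bound on $\P(n)$; everything else is bookkeeping with the defining product for $t$ and the doubly-exponential growth of $a(n)$ and $d(n)$. I do not expect any serious obstacle, as the slack between $1/9$ and $1/8$ is enormous compared to the unbounded contribution.
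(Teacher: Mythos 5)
Your proof is correct and follows essentially the same route as the paper: split the sum at bounded versus unbounded scales, bound the bounded contribution by $\sum_k 10^{-k} = 1/9$, and control the unbounded contribution via $\Q(n) \le 2\P(n-1)$ together with a trivial a priori bound on $\P(n-1)$ in terms of $t$ (the paper uses $(c(a(n-1))+1)\P(n-1) \le t(a(n-1))$, equivalent to your $\P(n-1) \le t(a(n-1)-1)$, which is property \ref{p:tower_height}). The only difference is cosmetic: you keep a slightly tighter constant than the paper's throwaway factor of $2$, but both yield the same $10^{-(a(n)-a(n-1)-1)}$ decay and the same conclusion.
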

\begin{proof}
First estimate
\[
\dfrac{(d(n)+1) \Q(n)}{t(a(n))}
=
\dfrac{d(n)+1}{c(a(n))+1} \cdot \dfrac{\Q(n)}{t(a(n)-1)}
\le
\dfrac{2\P(n-1)}{t(a(n)-1)}
\le
\dfrac{t(a(n-1))}{t(a(n)-1)}
=
\dfrac{1}{10^{a(n) - a(n-1)-1}}
\]
using \eqref{eqn:pn_bound}.
Then use \eqref{eqn:meas_bounded_removal} and \eqref{eqn:meas_unbounded_removal} to get
\[
\sum_{k \in \N} \nu(W_k)
\le
\sum_{k \in \N} \frac{1}{t(k)} + \sum_{n \in \N} \dfrac{(d(n)+1) \Q(n)}{t(a(n))}
\le \sum_{n \in \N} \dfrac{1}{10^n} + \dfrac{1}{10^{a(n+1) - a(n)-1}}
\]
which is less than $1/8$.
\end{proof}

By \cref{lem:removed_mass} the set 
\[
X = \Omega \Bigg\backslash \bigcup_{k \in \N} W_k
\]
has positive measure.
It is a closed subset of $\Omega$ as each $W_k$ is open.
The set
\[
G
=
\{ x \in X : S^n(x) \in X_0 \textup{ for infinitely many } n \in \N \textup{ and } S^{-n}(x) \in X_0 \textup{ for infinitely many } n \in \N \}
\]
has full measure in $X$ by Poincar\'e{} recurrence.
We define $T : X \to X$ by taking $T$ to be the first return map on $G$ and the identity map on $X \setminus G$.
Define $\mu(B) = \nu(B \cap X) / \nu(X)$ for every set $B \subset X$ of the form $C \cap X$ with $C$ a Borel subset of $\Omega$.
Our system is $(X,\mu,T)$. As $(\Omega,\nu,S)$ is ergodic $(X,\mu,T)$ is as well.

\begin{figure}[t]

\centering

\begin{tikzpicture}[yscale=0.5, line width=0.5mm]

\begin{scope}[shift={(-0.5,0)}]
\draw [decorate, decoration = {brace}] (0,-1) --  (0,8);
\node[align=right] at (-1.2,3.5) {$\P(n)$\\cylinders};
\end{scope}

\begin{scope}
\draw (0,8) -- (1,8);
\draw (0,7) -- (1,7);
\node at (0.5,6.2) {$\vdots$};
\draw (0,5) -- (1,5);
\draw (0,4) -- (1,4);
\draw (0,3) -- (1,3);
\draw (0,2) -- (1,2);
\node at (0.5,1.2) {$\vdots$};
\draw (0,0) -- (1,0);
\draw (0,-1) -- (1,-1);
\node at (0.5,-2) {$0$};
\end{scope}

\begin{scope}[shift={(1.5,0)}]
\node at (0.5,6) {$\cdots$};
\node at (0.5,1) {$\cdots$};
\end{scope}

\begin{scope}[shift={(3,0)}]
\draw (0,8) -- (1,8);
\draw (0,7) -- (1,7);
\node at (0.5,6.2) {$\vdots$};
\draw (0,5) -- (1,5);
\draw (0,4) -- (1,4);
\draw (0,3) -- (1,3);
\draw (0,2) -- (1,2);
\node at (0.5,1.2) {$\vdots$};
\draw (0,0) -- (1,0);
\draw (0,-1) -- (1,-1);
\node at (0.5,-2) {$d(n)$};
\end{scope}

\begin{scope}[shift={(4.5,0)}]
\draw[color=red] (0,8) -- (1,8);
\draw[color=red] (0.4,7.7) -- (0.6,8.3);
\draw[color=red] (0,7) -- (1,7);
\draw[color=red] (0.4,6.7) -- (0.6,7.3);
\node at (0.5,6.2) {$\vdots$};
\draw[color=red] (0,5) -- (1,5);
\draw[color=red] (0.4,4.7) -- (0.6,5.3);
\draw (0,4) -- (1,4);
\draw (0,3) -- (1,3);
\draw (0,2) -- (1,2);
\node at (0.5,1.2) {$\vdots$};
\draw (0,0) -- (1,0);
\draw (0,-1) -- (1,-1);
\node at (0.5,-2) {$d(n)+1$};
\end{scope}

\begin{scope}[shift={(6,0)}]
\node at (0.5,6) {$\cdots$};
\node at (0.5,1) {$\cdots$};
\end{scope}

\begin{scope}[shift={(7.5,0)}]
\draw[color=red] (0,8) -- (1,8);
\draw[color=red] (0.4,7.7) -- (0.6,8.3);
\draw[color=red] (0,7) -- (1,7);
\draw[color=red] (0.4,6.7) -- (0.6,7.3);
\node at (0.5,6.2) {$\vdots$};
\draw[color=red] (0,5) -- (1,5);
\draw[color=red] (0.4,4.7) -- (0.6,5.3);
\draw (0,4) -- (1,4);
\draw (0,3) -- (1,3);
\draw (0,2) -- (1,2);
\node at (0.5,1.2) {$\vdots$};
\draw (0,0) -- (1,0);
\draw (0,-1) -- (1,-1);
\node at (0.5,-2) {$2d(n)+1$};
\end{scope}

\begin{scope}[shift={(9,0)}]
\draw [decorate, decoration = {brace}] (0,8) --  (0,5);
\node[align=left] at (1.2,6.5) {$\Q(n)$\\cylinders};
\end{scope}

\end{tikzpicture}

\caption{The picture of $Y_{a(n)-1}$ at scale $a(n)$. The marked cylinders constitute $W_{a(n)}$.}
\label{fig:unbounded_removal}
\end{figure}
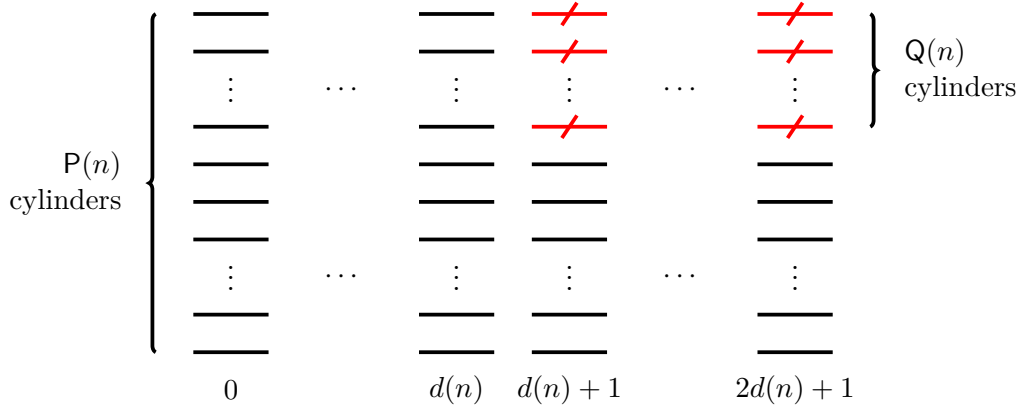

\subsection{Measures of cylinders}

We will note that our choices for $d(n)$ and $a(n)$ give the following properties.
\begin{enumerate}[label={\textbf{P\arabic*}.},ref={\textbf{P\arabic*}},leftmargin=5em]
\item
\label{p:tower_height}
$\P(n) \le t(a(n)-1)$
\item
\label{p:summing_gaps}
$\displaystyle \sum_{n=1}^\infty \dfrac{\P(n)}{\P(n+1)} < \dfrac{1}{10^5}$
\end{enumerate}

We will want often to study partial $T$ orbits as orbits of some $S|Y_n$. For convenience we define  for any $r \in \Z$ and any $b \in \N$
\[
\bad(b,r) = \bigcup_{a(m) > b} \{x \in X : (S|Y_b)^i(x) \in W_{a(m)} \textup{ for some } -|r| \le i \le |r| \}
\]
to consist of those points whose $S|Y_b$ orbits visit some finer $W_{a(m)}$ between times $-|r|$ and $|r|$.
When $x$ does not belong to $\bad(b,r)$ we have
\[
T^i(x) = (S|Y_b)^i(x) \textup{ for all } -|r| \le i \le |r|
\]
unless some $(S|Y_b)^i(x)$ belongs to some $[7^i 8]$ with $i \ge b$.

If a cylinder set $C$ of length $k$ does not belong to $W_1 \cup \cdots \cup W_k$ then $\mu$ assigns it positive measure.
In other words, any cylinder set $C$ not removed by scale $|C|$ has positive measure in $X$.
Slightly more is true: the following lemma will be used to prove $\mu(C) t(|C|)$ is bounded away from zero over such cylinder sets.

\begin{lemma}
\label{lem:cylinder_decay}
If $C \ne [7^{|C|}]$ is a cylinder not contained in $W_1 \cup \cdots \cup W_{|C|}$ then
\[
\nu(C \setminus X) \le \dfrac{4}{t(|C|)} \sum_{a(n) > |C|} \dfrac{\P(n-1)}{\P(n)}
\]
holds.
\end{lemma}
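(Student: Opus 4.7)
My plan is to decompose $C\setminus X$ by the scale at which points are removed and then bound each contribution separately. For $j\le|C|$, each $W_j$ is a finite disjoint union of cylinders of length $j$, so $C$ is either contained in or disjoint from each constituent; the hypothesis $C\not\subseteq W_1\cup\cdots\cup W_{|C|}$ forces $C\cap W_j=\emptyset$ for $j\le|C|$. For a bounded scale $m>|C|$, the set $W_m=[7^{m-1}8]$ lies inside $[7^{|C|}]$, and since $C\ne[7^{|C|}]$ are distinct length-$|C|$ cylinders, they are disjoint. Thus only unbounded scales contribute:
\[
\nu(C\setminus X)=\sum_{a(n)>|C|}\nu(C\cap W_{a(n)}).
\]

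For each such $n$, writing $D_j^{(n)}:=(S|Y_{a(n)-1})^{-j}([0^{a(n)-1}])$, the decomposition $W_{a(n)}=\bigsqcup_{j=1}^{\Q(n)}D_j^{(n)}\cap\{x(a(n))\ge d(n)+1\}$ together with the product structure of $\nu$ and the identity $d(n)+1=(c(a(n))+1)/2$ gives
\[
\nu(C\cap W_{a(n)})=\frac{N_{C,n}}{2\,t(a(n)-1)},\qquad N_{C,n}:=\#\{1\le j\le\Q(n):D_j^{(n)}\subseteq C\},
\]
since each length-$(a(n)-1)$ cylinder $D_j^{(n)}$ is either contained in $C$ or disjoint from it.

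The crux is to bound $N_{C,n}$. Since $Y_{a(n)-1}\subseteq Y_{|C|}$, each step of $(S|Y_{a(n)-1})^{-1}$ factors as one or more steps of $(S|Y_{|C|})^{-1}$, with average multiplicity $1/\nu(Y_{a(n)-1}\mid Y_{|C|})\le 8/7$ (using \cref{lem:removed_mass}). Projecting $D_1^{(n)},\dots,D_{\Q(n)}^{(n)}$ onto the first $|C|$ coordinates therefore traces a backward orbit of $(S|Y_{|C|})^{-1}$ of total length at most $(8/7)\Q(n)$ through the length-$|C|$ tower of $Y_{|C|}$, which contains $\nu(Y_{|C|})\,t(|C|)\ge(7/8)\,t(|C|)$ cylinders. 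Hence any given length-$|C|$ cylinder is visited at most $2\Q(n)/t(|C|)+1$ times, and the inequality $\Q(n)\le 2\P(n-1)$ yields
\[
N_{C,n}\le\frac{4\,\P(n-1)}{t(|C|)}+1.
\]

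Combining with property \ref{p:tower_height} ($\P(n)\le t(a(n)-1)$) gives $\nu(C\cap W_{a(n)})\le 2\P(n-1)/(t(|C|)\P(n))+1/(2\P(n))$; summing over $n$ with $a(n)>|C|$, using property \ref{p:summing_gaps} to control the geometric tail coming from the first term, and observing that the rapid growth of $\P(n)$ makes the additive $1/(2\P(n))$ contributions absorbable into the constant, produces the claimed inequality with constant $4$. The main obstacle will be the counting in the third paragraph: rigorously tracking how the projected backward trajectory distributes across the length-$|C|$ tower of $Y_{|C|}$, and handling the additive $+1$ contribution from the smallest relevant $n$ so that it fits inside the geometric tail controlled by property \ref{p:summing_gaps}.
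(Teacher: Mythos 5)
Your decomposition of $C\setminus X$ by scale, the observation that only unbounded $a(n)>|C|$ contribute, and the identity $\nu(C\cap W_{a(n)})=N_{C,n}/(2t(a(n)-1))$ match the structure of the paper's one-line argument, which asserts without an explicit count that at most a $\tfrac{2(d(n)+1)\Q(n)}{(2d(n)+1)\P(n)}$ fraction of the length-$a(n)$ subcylinders of $C$ lie in $W_{a(n)}$. So the strategy is the same; the issue is the quantitative step you flag yourself at the end.

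The additive $+1$ in $N_{C,n}\le 4\P(n-1)/t(|C|)+1$ is \emph{not} absorbable. It contributes $1/(2t(a(n)-1))$ to $\nu(C\cap W_{a(n)})$, whereas the claimed per-term budget $\tfrac{4}{t(|C|)}\tfrac{\P(n-1)}{\P(n)}$ has size about $\P(n-1)/\bigl(t(|C|)\,t(a(n)-1)\bigr)$ since $\P(n)$ and $t(a(n)-1)$ are comparable; these match only when $t(|C|)$ is at most of the order of $\P(n-1)$. But for the smallest $n$ with $a(n)>|C|$ one may have $t(|C|)$ as large as $t(a(n)-1)\gg\P(n-1)$. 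Concretely, take $|C|=a(n)-1$ and let $C$ be one of the $\Q(n)$ removed rows $(S|Y_{a(n)-1})^{-j}[0^{a(n)-1}]$, $1\le j\le\Q(n)$; this $C$ is disjoint from $W_1\cup\cdots\cup W_{|C|}$ and $C\ne[7^{|C|}]$, yet $N_{C,n}=1$, so $\nu(C\cap W_{a(n)})=\nu(C)/2=1/(2t(|C|))$, which already exceeds the entire claimed right-hand side (at most $4\cdot 10^{-5}/t(|C|)$ by \ref{p:summing_gaps}). So the $+1$ is an essential obstruction, not a rounding nuisance, and the inequality as written cannot follow from your argument; the paper's fractional assertion breaks for the same $C$, so an extra restriction on $C$ is being tacitly assumed there. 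A secondary gap: your bound $(8/7)\Q(n)$ on the total length of the projected backward orbit is derived from an \emph{average} multiplicity, but the removals passing from $Y_{|C|}$ to $Y_{a(n)-1}$ could be concentrated in exactly the window the orbit traverses, so a worst-case count is needed, not just the expected one.
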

\begin{proof}
Fix a cylinder set $C \ne [7^{|C|}]$ disjoint from $W_1 \cup \cdots \cup W_{|C|}$.
We can only possibly have $C \cap W_j \ne \emptyset$ when $j = a(n) > |C|$ for some $n \in \N$.
For any such $n$, note that $C$ consists of $t(a(n)) / t(|C|)$ cylinders of length $a(n)$ and that at most
\[
\dfrac{2 (d(n) + 1) \Q(n)}{(2d(n)+1) \P(n)}
\le
\dfrac{2 \Q(n)}{\P(n)}
\le
\dfrac{4 \P(n-1)}{\P(n)}
\]
of them belong to $W_{a(n)}$.
Therefore
\[
\nu(C \cap W_{a(n)}) \le \dfrac{4}{t(|C|)} \dfrac{\P(n-1)}{\P(n)}
\]
from which the conclusion follows.
\end{proof}

\begin{lemma}
\label{lem:cylinder_size}
There is $\xi > 0$ such that
\[
\mu(C) \ge \dfrac{\xi}{t(|C|)}
\]
whenever $C$ is a cylinder set not contained in $W_1 \cup \cdots \cup W_{|C|}$.
\end{lemma}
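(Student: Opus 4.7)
The plan is to write $\mu(C) = \nu(C \cap X)/\nu(X)$ and bound the numerator and denominator separately. By \cref{lem:removed_mass} we already have $\nu(X) > 7/8$, so it is enough to show $\nu(C \cap X) \ge \xi'/t(|C|)$ for some absolute $\xi' > 0$. Since $\nu(C) = 1/t(|C|)$ for a cylinder defined by its first $|C|$ coordinates, this reduces to exhibiting a constant $c < 1$ with $\nu(C \setminus X) \le c / t(|C|)$ uniformly over admissible $C$.

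The estimate splits into two cases. When $C \ne [7^{|C|}]$ I would apply \cref{lem:cylinder_decay} directly, which gives
\[
\nu(C \setminus X) \le \frac{4}{t(|C|)} \sum_{a(n) > |C|} \frac{\P(n-1)}{\P(n)} \le \frac{4 \cdot 10^{-5}}{t(|C|)}
\]
by property \ref{p:summing_gaps}. When $C = [7^{|C|}]$ the previous lemma does not apply because $C$ additionally contains every bounded-scale removal $W_j = [7^{j-1}8]$ with $j > |C|$. For this case I would argue by hand: the bounded contributions sum to
\[
\sum_{\substack{j > |C| \\ j \notin \{a(n)\}}} \frac{1}{t(j)} \le \sum_{j > |C|} \frac{1}{t(j)} \le \frac{1}{t(|C|)} \cdot \frac{1}{9}
\]
since $t(j+1) \ge 10\, t(j)$, while the unbounded contributions are handled exactly as in the proof of \cref{lem:cylinder_decay} and again come out bounded by $(4 \cdot 10^{-5})/t(|C|)$ via \ref{p:summing_gaps}. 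Adding the two estimates gives $\nu(C \setminus X) \le c/t(|C|)$ for a constant $c$ comfortably less than $1$.

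Combining the two cases and dividing by $\nu(X) \ge 7/8$ yields $\mu(C) \ge \xi / t(|C|)$ with, say, $\xi = 8(1-c)/7 > 0$. The only place one has to be careful is the excluded cylinder $[7^{|C|}]$, since it is the unique initial cylinder that meets the bounded-scale removals at all subsequent scales; but geometric decay of $1/t(j)$ handles it immediately, so I do not anticipate any serious obstacle.
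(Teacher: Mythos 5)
Your argument tracks the paper's own proof: both separate the cylinder $C=[7^{|C|}]$ from the others, handle the bounded-scale removals it contains via the geometric sum $\sum_{n>|C|} 1/t(n) \le (1/(9t(|C|)))$, handle the unbounded-scale removals exactly as in \cref{lem:cylinder_decay}, and conclude via \ref{p:summing_gaps}. One small slip in the final step: dividing $\nu(C\cap X)$ by $\nu(X)\ge 7/8$ does not produce the factor $8/7$ you claim — knowing $\nu(X)$ is large gives $1/\nu(X)\le 8/7$, which is an upper bound, not a lower bound. What you actually want is $\nu(X)\le 1$, which yields $\mu(C)=\nu(C\cap X)/\nu(X)\ge \nu(C\cap X)\ge(1-c)/t(|C|)$, so $\xi=1-c$ works; this does not affect the substance, only the stated constant.
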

\begin{proof}
If $C = [7^{|C|}]$ then
\[
\nu(C \cap W_n) \le \dfrac{1}{t(n)}
\]
whenever $n > |C|$ is a bounded scale.
Thus for every $C$ not contained in $W_1 \cup \cdots \cup W_{|C|}$ we have
\[
\nu(C \setminus X)
\le
\sum_{n > |C|} \dfrac{1}{t(n)} + \dfrac{4}{t(|C|)} \sum_{a(n) > |C|} \dfrac{\P(n-1)}{\P(n)}
\le
\dfrac{1}{t(|C|)} \left( \dfrac{1}{9} + \dfrac{4}{10^5} \right)
\]
after \cref{lem:cylinder_decay} and \ref{p:summing_gaps}.
Thus
\[
\xi = \dfrac{55}{63} \dfrac{1}{\nu(X)}
\]
suffices.
\end{proof}

\begin{lemma}
\label{lem:indep}
Let $D \subset \Omega$ be a cylinder whose largest defining index is $r$ and let $C \subset \Omega$ be a cylinder whose smallest defining index is $r+k$ for some $k \ge 1$. 
Further assume $D \cap X \neq \emptyset$ and $D \cap C \cap X \neq \emptyset$.
Then
\[
\frac{\mu(D\cap C \cap X)}{\mu(D)\mu(C)}\geq 1-\frac{1}{2^k}
\]
holds.
\end{lemma}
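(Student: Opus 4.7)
The plan is to exploit the product structure of $\nu$: any correlation between $D$ and $C$ under $\mu$ must arise from removed cylinders $W_m$ whose defining coordinates include both something $\le r$ (i.e.\ something in $D$'s index set) and something $\ge r+k$ (i.e.\ something in $C$'s index set). Such $W_m$ necessarily have $m \ge r+k$. Accordingly I would decompose
\[
W \;=\; W^{<} \sqcup W^{\ge}, \qquad W^{<} = \bigcup_{m < r+k} W_m, \qquad W^{\ge} = \bigcup_{m \ge r+k} W_m,
\]
using that the $W_m$ are pairwise disjoint (as noted in \cref{subsec:removals}). Set $X^{<} = \Omega \setminus W^{<}$ so that $X = X^{<} \setminus W^{\ge}$.

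Because $W^{<}$ depends only on coordinates strictly below $r+k$ and $C$ only on coordinates at or above $r+k$, the events $W^{<}$ and $C$ are $\nu$-independent, and hence so are $D \cap W^{<}$ and $C$. This yields
\begin{align*}
\nu(D \cap C \cap X) &= \nu(D \cap X^{<})\,\nu(C) - \nu(D \cap C \cap W^{\ge}),\\
\nu(D \cap X) &= \nu(D \cap X^{<}) - \nu(D \cap W^{\ge}),\\
\nu(C \cap X) &= \nu(X^{<})\,\nu(C) - \nu(C \cap W^{\ge}),\\
\nu(X) &= \nu(X^{<}) - \nu(W^{\ge}),
\end{align*}
after which the ratio of interest factors as
\[
\frac{\mu(D \cap C \cap X)}{\mu(D)\mu(C)} = \frac{\nu(X)\,\nu(D \cap C \cap X)}{\nu(D \cap X)\,\nu(C \cap X)} = \frac{(1 - q_{DC})(1 - q_{\Omega})}{(1 - q_D)(1 - q_C)},
\]
where $q_E := \nu(E \cap W^{\ge})/\nu(E \cap X^{<})$ for $E \in \{\Omega, D, C, D \cap C\}$.

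The key estimate controls each $q_E$. For each $m \ge r+k$, every cylinder comprising $W_m$ fixes all coordinates $1,\dots,m$ and is therefore either disjoint from $D$ or contained in $D$. In the compatible case, since $c(i)+1 \ge 2$ at every scale,
\[
\frac{\nu(D \cap W_m)}{\nu(D)} \;=\; \prod_{i \in \{1,\dots,m\} \setminus S_D} \frac{1}{c(i)+1} \;\le\; \prod_{i = r+1}^{m} \frac{1}{c(i)+1} \;\le\; 2^{-(m-r)}.
\]
Summing a geometric series over bounded $m \ge r+k$ gives a bound of the form $C \cdot 2^{-k}$ for $\nu(D \cap W^{\ge})/\nu(D)$; the unbounded scales $m = a(j)$, for which $W_{a(j)}$ is a disjoint union of many cylinders, contribute negligibly thanks to the rapid decay guaranteed by~\ref{p:summing_gaps}. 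Analogous arguments bound $q_{\Omega}$, $q_C$ and $q_{DC}$; in particular, because $W_m$ is either contained in or disjoint from $D$, one has the crucial identity $\nu(D \cap C \cap W_m) = \nu(D \cap W_m)\,\nu(C_{>m})$ whenever $W_m$ is compatible with both, which links $q_{DC}$ to the other three $q_E$.

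Finally, one substitutes these bounds into the factored expression above and verifies the inequality $(1 - q_{DC})(1 - q_{\Omega}) \ge (1 - 1/2^k)(1 - q_D)(1 - q_C)$ by a direct algebraic estimate. The main obstacle is this last assembly step: the individual bounds on the $q_E$ lose a constant factor relative to $2^{-k}$, so one must carefully exploit the structural relationships among $q_{DC}$, $q_D$, $q_C$ and $q_\Omega$ (most notably the factorization $\nu(D \cap C \cap W_m) = \nu(D \cap W_m)\nu(C_{>m})$) so that the constants combine and cancel to yield the clean bound $1 - 1/2^k$ rather than merely $1 - O(2^{-k})$.
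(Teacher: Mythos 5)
Your decomposition $W = W^{<} \sqcup W^{\ge}$ and the resulting exact factorization
\[
\frac{\mu(D\cap C\cap X)}{\mu(D)\mu(C)} = \frac{(1-q_{DC})(1-q_\Omega)}{(1-q_D)(1-q_C)}
\]
are correct and constitute a genuinely different line of attack from the paper's: the paper instead expresses $D\cap Y_{r+k-1}$ as a union of $N$ length-$(r+k-1)$ cylinders, observes that at most one of those can meet the bounded removals $[7^{j-1}8]$ at scales $j \ge r+k$, and then compares each remaining cylinder's intersection with $C$ against the small mass lost to the unbounded removals. Your route is more systematic; the paper's is more ad hoc but shorter.

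Two points need repair before the argument closes. First, the ``crucial identity'' $\nu(D \cap C \cap W_m) = \nu(D \cap W_m)\nu(C_{>m})$ does not hold as stated: each constituent cylinder of $W_m$ fixes all coordinates in $\{1,\dots,m\}$, in particular any indices in $[r+k,m]$ where $C$ has constraints. The correct form is $\nu(D \cap C \cap W_m) = \nu(C_{>m})\,\nu(D \cap C_{\le m} \cap W_m)$, where $C_{\le m}$ is the portion of $C$ defined at indices $\le m$. Dropping that factor yields, via $\nu(C_{>m}) \ge \nu(C)$, the inequality $q_{DC} \ge q_D$, which pushes the ratio the wrong way; it is precisely the incompatibility of the cylinders of $W_m$ with $C_{\le m}$ that you must exploit to keep $q_{DC}$ small.

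Second, the assembly step you flag as the main obstacle really is the bulk of the work. To obtain the clean $1 - 2^{-k}$ one essentially needs each $q_E$ to be a small multiple of $2^{-k}$. The bounded scales give $\bigo(10^{-k})$ by the geometric sum. For the unbounded scales, the first contributing $a(j_0) \ge r+k$ satisfies $a(j_0-1) = a(j_0)/2 \ge (r+k)/2$, so its contribution is only of order $10^{-(r+k)/2}$; since $\sqrt{10}>2$ this does eventually beat $2^{-k}$, but the multiplicative prefactors (the constant implicit in the estimate from the proof of \cref{lem:removed_mass}, and the $4$ in \cref{lem:cylinder_decay}) push the needed bound out of reach for small and moderate $k$, so some supplementary argument is required there (much as the paper treats $k=1$ separately via the trivial $(7/8)^2$ bound). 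None of this is unfixable, but it is the real content of the proof and your proposal stops short of it.
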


\begin{proof}

We first consider the case when $D$ has every index defined up to $r$ and the indices defining $C$ are from $r+1$ to $b$.
(This is a special version of the case $k=1$.)
Thus $C \cap D$ is a single cylinder defined at entries $1$ to $b$ that is non-empty. Thus $ C \cap D \subset Y_b$ and so $\mu(C \cap D)=\nu(X)^{-1}(\frac 1 {t(b)} -\nu(C \cap D \cap X^c))$.
Thus we compute 
\[
\nu(Y_b \setminus X)\leq \sum_{j>b} \frac{1}{t(j)} + \sum_{a(j)>b} \frac{4\P(j-1)}{\P(j)}<\frac{1}{8} \nu(C \cap D) = \dfrac{1}{8} \dfrac {1}{t(b)}
\]
so
\[
\mu(C\cap D) \geq \left( \frac{7}{8} \right)^2 \frac{1}{t(r)} \frac{1}{\nu(X)} \frac{t(r)}{t(b)} \frac{1}{\nu(X)} \geq \left(\dfrac{7}{8}\right)^2\mu(C) \mu(D)
\]
establishing the first case.

We now treat $D$ defined at all indices at most $r$ and $C$ is defined at all indices from $r+k$ to $b$. We consider $D \cap Y_{r+k-1},$ a union of $N\geq 8^k $ cylinders. Now 
\begin{equation}
\label{eq:D bound}
\left| \mu(D)-\frac{N}{\p(r+k+1)} \right|
\leq
\max \left\{
\nu(Y_{r+k-1}\setminus X)\nu(X)^{-1},
\frac{\nu(X)^{-1}}{\nu(Y_{r+k-1})^{-1}}
\right\}.
\end{equation}
Indeed we estimate the maximum amount that could be removed by going from $Y_{r+k-1}$ to $X$ and assume it is all removed from $D$ and on the other hand, if nothing is removed from $D$, how much we have to scale it to obtain its proportion of $X$ from its proportion of $Y_{r+k-1}$. Similarly 
\begin{equation}
\label{eq:C bound}
\left| \mu(C)-\frac{\p(r+k+1)}{\p(b+1)} \right|
\leq
\max \left\{ \nu(Y_{b}\setminus X)\nu(X)^{-1}, \frac{\nu(X)^{-1}}{\nu(Y_{b})^{-1}} \right\}.
\end{equation}
At most one of the $N$ cylinders defined at entries 1 to $r+k-1$ intersects $\bigcup_{j \notin \{a(k)\}}W(j)$.

For each of $N-1$ other cylinders,
we intersect them with $C$ and obtain cylinders 
defined at entries from $1$ to $b$ which do not intersect $W_j$ for $j \notin \{a(i)\}_{i \in \mathbb{N}}.$ Name these cylinders $E_1,...,E_{N-1}$ and note that
\[
\nu \left( \bigcup_{i=1}^{N-1}E_i \cap C \cap Y_b^c \right)
<
\sum_{a(j)>b}\frac 4 {t(b)}\dfrac{\P(j-1)}{\P(j)}< \frac 1 {2^{b+5}}\frac 1 {\p(b+1)}
\]
holds.
We obtain that
\[
\mu(C \cap D) \geq \frac{N-1}{\p(b+1)}-\frac{1}{2^{b+5}}\frac{1}{\p(b+1)}
\]
Since $N\geq 8^k$, combining this with \eqref{eq:D bound} and \eqref{eq:C bound} we have the claim. 

The general case is similar: treat $D$ as a union of $A$ cylinders defined on every index from 1 to $r+k-1$ and $C$ as a union of $B$ cylinders defined on every index from $r+k$ to $b$.

\end{proof}

\subsection{Expansion and reduction}
\label{subsec:expansion_reduction}
Given $r \in \Z$ we will analyze $T^r$ by expanding $r$ using a numeration system closely tied to the dynamics of $T$.
The numeration system is based on the heights $\p(j)$ of the left-most towers in the scale $j$ picture.
At unbounded scales we might instead have chosen to use $\p(j) - \q(j)$ in our numeration system as each height is witnessed by about half of the points in the system. Although we can only use one in our numeration system, we will in effect keep track of both in the technical parts of our arguments using the reductions in \cref{def:reduction}.
To define this expansion we need the following lemma.

\begin{lemma}
\label{lem:expansion_defined}
We have
\[
\left( \dfrac{c(n) + 1}{2} + 1 \right) \p(n) \le \p(n+1)
\]
for all $n \in \N$.
\end{lemma}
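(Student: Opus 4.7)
The plan is to derive an explicit formula for $\p(n+1)$ in terms of $\p(n)$ and then verify the inequality directly. The key observation is that $\p(n+1)$ equals the total number $H(n)$ of length-$n$ cylinders in $Y_n$: the $(S|Y_{n+1})$-orbit of $[0^n 0]$ exhausts the left-most column (i.e., the length-$(n+1)$ cylinders with $x(n+1) = 0$) of the scale-$(n+1)$ picture before reaching $[0^n 1]$, and in both regimes the removal set $W_{n+1}$ is disjoint from this column. Indeed, $W_{n+1} = [7^n 8]$ lives in the $x(n+1) = 8$ column when $n+1$ is bounded, while $W_{a(j+1)}$ only touches columns with $x(a(j+1)) \ge d(j+1) + 1$.

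Next I would count $H(n)$ by summing column heights in the scale-$n$ picture of $Y_n$. Every scale-$n$ column of $Y_{n-1}$ has the common height $\p(n)$, since $\p(n)$ is the height of the untouched left-most column in $Y_n$. Passing from $Y_{n-1}$ to $Y_n$ subtracts exactly the number of length-$n$ cylinders in $W_n$: one cylinder when $n$ is bounded and $(d(j)+1)\Q(j)$ when $n = a(j)$ (read off from Figures~\ref{fig:boundedRemoval} and \ref{fig:unbounded_removal}). Therefore
\[
\p(n+1) = H(n) = (c(n)+1)\,\p(n) - \delta(n),
\]
with $\delta(n) = 1$ for bounded $n$ and $\delta(n) = (d(j)+1)\Q(j)$ for $n = a(j)$.

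With this formula the inequality is a direct check. For bounded $n$ we have $c(n)+1 = 10$ and the target becomes $6\,\p(n) \le 10\,\p(n) - 1$, which holds since $\p(n) \ge 1$. For unbounded $n = a(j)$, using $c(n)+1 = 2(d(j)+1)$ the claim simplifies to
\[
d(j)\,\p(n) \ge (d(j)+1)\,\Q(j),
\]
and this follows easily from the recursion $\Q(j) = 2\P(j-1) - \Q(j-1) \le 2\P(j-1)$ together with property~\ref{p:summing_gaps}, which forces $\P(j-1)/\P(j)$ to be minuscule and hence $\Q(j)/\p(n)$ to be far below $d(j)/(d(j)+1)$.

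The only step requiring any care is identifying $\p(n+1) = H(n)$ and verifying that the $x(n+1)=0$ and $x(n+1)=1$ columns are both untouched by $W_{n+1}$ in both removal regimes; after that the rest is routine arithmetic.
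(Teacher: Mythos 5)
Your proof is correct and follows essentially the same route as the paper's, which also reduces the claim to the recursion $\p(n+1) = (c(n)+1)\p(n) - 1$ for bounded $n$ and $\p(n+1) = (c(n)+1)\p(n) - \q(n)(c(n)+1)/2 = (c(n)+1)\p(n) - (d(j)+1)\Q(j)$ for $n = a(j)$, and then simply remarks that the inequality follows. You additionally justify the recursion by counting surviving cylinders and spell out the final arithmetic (using $\Q(j) \le 2\P(j-1)$ and \ref{p:summing_gaps}), which the paper leaves implicit.
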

\begin{proof}
We have
\[
\p(n+1)
=
\begin{cases}
(c(n) + 1) \p(n) - 1 & n \textup{ a bounded scale} \\
(c(n) + 1) \p(n) - \q(n) (c(n)+1)/2 & n \textup{ an unbounded scale} \\
\end{cases}
\]
from which the statement follows.
\end{proof}
\begin{definition}
Fix $r$ in $\Z$.
By the \define{tower expansion} of $r$ we mean the expansion
\begin{equation}
\label{eqn:rigidity_expan}
r = \sum_{j=1}^{\scl(r)} e(j) \p(j)
\end{equation}
with $e(\scl(r)) \ne 0$ and $2|e(j)| \le c(j)+1$ chosen greedily.
Formally, this means the following: take
\[
\scl(r)
=
\max
\left\{
j \in \N : e \mapsto |r - e \p(j)| 
\textup{ is not minimized by } e = 0
\right\}
\]
with the convention that $\scl(0) = 0$.
Define $\lst(r) = e(\scl(r))$ to be the associated minimizing value, choosing $|\!\lst(r)|$ as large as possible whenever there is a choice. 
Since $\p(1) = 1$ both $1$ and $-1$ have tower expansions, and since $|r - \lst(r) \p(\scl(r))| < |r|$ one can define the tower expansion of any integer by induction on $|r|$.
Indeed letting $r'=r-e(r)J(r)$ we can inductively (using $|r'|<|r|$) assume $r'=\sum_{i=0}^{J(r')}e(i)p(i)$ and represent $r=e(r)p(J(r))+\sum_{i=0}^{J(r')}e(i)p(i)$. Since $J(r')<J(r)$ this gives an expansion for $r$.
We will refer to $\scl(r)$ as the \define{scale} of $r$. 
We will say that $r$ is \define{bounded} if $\scl(r)$ is a bounded scale, and that $r$ is \define{unbounded} if $\scl(r)$ is an unbounded scale.
\end{definition}

\begin{example}
We have $\p(1) = 1$, $\p(2) = 9$, $\p(3) = 89$, $p(4) = 889$ and $c(i) + 1 = 10$ for $i < a(1)$ so
\begin{gather*}
74 = 3  \p(1) -2 \p(2) +  \p(3) \\
150 = - \p(1) - 3 \p(2) + 2 \p(3)
\end{gather*}
are tower expansions.
\end{example}

We conclude this section with some basic results about our expansion.

\begin{lemma}
\label{lem:expansion_bound}
If $2|e(j)| \le c(j) + 1$ for all $1 \le j \le J$ then
\[
\left| \sum_{j=1}^J e(j) \p(j) \right| \le \left( \dfrac{c(J) + 1}{2} + 1 \right) \p(J)
\]
holds.
\end{lemma}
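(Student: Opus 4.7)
The plan is a straightforward induction on $J$ using \cref{lem:expansion_defined} as the engine. First apply the triangle inequality to reduce the claim to the purely quantitative estimate
\[
\sum_{j=1}^J \dfrac{c(j)+1}{2} \p(j) \le \left( \dfrac{c(J)+1}{2} + 1 \right) \p(J),
\]
since $|e(j)| \le (c(j)+1)/2$ by hypothesis.

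The key intermediate claim to prove by induction on $J \ge 1$ is
\[
\sum_{j=1}^{J-1} \dfrac{c(j)+1}{2} \p(j) \le \p(J).
\]
The base case $J=1$ is the empty sum. For the inductive step, assume the bound at $J$. Adding $\frac{c(J)+1}{2}\p(J)$ to both sides and invoking \cref{lem:expansion_defined} gives
\[
\sum_{j=1}^{J} \dfrac{c(j)+1}{2} \p(j) \le \p(J) + \dfrac{c(J)+1}{2}\p(J) = \left( \dfrac{c(J)+1}{2} + 1 \right) \p(J) \le \p(J+1),
\]
which is exactly the claim at $J+1$. The same chain of inequalities, evaluated at $J$ rather than passing to $J+1$, yields the desired bound on $\left| \sum_{j=1}^J e(j)\p(j) \right|$.

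There is no real obstacle: once \cref{lem:expansion_defined} is in hand, the estimate is a geometric-series-type bound saying that the growth rate of $\p(\cdot)$ dominates the partial sums of $\frac{c(j)+1}{2}\p(j)$. The only mild subtlety is to set up the induction so that the ``$+1$'' on the right of \cref{lem:expansion_defined} is used to absorb the top term of the sum, which is why it is cleaner to induct on the statement without the $J$th term included and then add it back at the end.
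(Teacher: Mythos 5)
Your proof is correct and is essentially the paper's argument: both are inductions whose single engine is \cref{lem:expansion_defined}, with you merely re-packaging the inductive hypothesis as the numerical bound $\sum_{j=1}^{J-1}\tfrac{c(j)+1}{2}\p(j) \le \p(J)$ rather than carrying the absolute value through. The two formulations are interchangeable via the triangle inequality, so this is the same proof up to bookkeeping.
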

\begin{proof}
The proof is by induction.
One calculates
\[
\left| \sum_{j=1}^{K+1} e(j) \p(j) \right|
\le
\left( \dfrac{c(K) + 1}{2} + 1 \right) \p(K) + \dfrac{c(K+1) + 1}{2} \p(K+1)
\]
and applies \cref{lem:expansion_defined}.
\end{proof}

\begin{corollary}
\label{cor:obvioius_thing}
For all $r \in \Z$ we have $|r| \le \tfrac{3}{4} \p(\scl(r) + 1)$.
\end{corollary}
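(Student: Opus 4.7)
The plan is to exploit the maximality clause in the definition of $\scl(r)$ directly. This is sharper than combining \cref{lem:expansion_bound} with \cref{lem:expansion_defined}, which together would only give $|r| \le \p(\scl(r)+1)$.

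Set $J = \scl(r)$. Since $J+1$ fails to lie in the set whose maximum defines $\scl(r)$, the map $e \mapsto |r - e\p(J+1)|$ must be minimized at $e = 0$, so $|r| \le |r - e\p(J+1)|$ for every $e \in \Z$. Specializing to $e = 1$ and squaring gives $0 \le \p(J+1)(\p(J+1) - 2r)$, hence $r \le \p(J+1)/2$; the choice $e = -1$ likewise gives $r \ge -\p(J+1)/2$. Therefore $|r| \le \tfrac{1}{2}\p(\scl(r)+1) \le \tfrac{3}{4}\p(\scl(r)+1)$. The case $r = 0$, for which $\scl(0) = 0$, is immediate since $\p(1) > 0$.

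There is no real technical obstacle: the maximality built into the definition of $\scl$ does all the work, and the argument in fact produces the stronger bound $\tfrac{1}{2}\p(\scl(r)+1)$. The looser constant $\tfrac{3}{4}$ in the statement is presumably slack that will be convenient in later sections.
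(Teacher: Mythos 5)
Your proof is correct, and it takes a genuinely different route from the paper. The paper combines \cref{lem:expansion_bound} (applied to the tower expansion, giving $|r| \le (\tfrac{c(J)+1}{2}+1)\p(J)$ for $J = \scl(r)$) with the explicit recursion for $\p(J+1)$ in terms of $\p(J)$, checking the bounded and unbounded cases separately to see that $(\tfrac{c(J)+1}{2}+1)\p(J) < \tfrac{3}{4}\p(J+1)$. You instead read off the inequality directly from the maximality clause in the definition of $\scl$: since $J+1$ is outside the set defining $\scl(r)$, the value $e=0$ minimizes $e \mapsto |r - e\p(J+1)|$, and testing $e = \pm 1$ yields $|r| \le \tfrac{1}{2}\p(\scl(r)+1)$. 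This is shorter, avoids the case split, needs no appeal to the recurrence, and produces a strictly sharper constant. What the paper's route buys in exchange is explicitness about how $|r|$ compares to $\p(\scl(r))$ (not just $\p(\scl(r)+1)$), and it exercises the pair of lemmas (\cref{lem:expansion_bound} and \cref{lem:expansion_defined}) that reappear in the proof of \cref{lem:need it later}; your route makes the corollary logically independent of those lemmas. One small quibble with your framing: the naive chaining of \cref{lem:expansion_bound} and \cref{lem:expansion_defined} gives $|r| \le \p(\scl(r)+1)$ as you say, but the paper's actual argument is a bit more careful and does recover the $\tfrac{3}{4}$ in the statement; the improvement of your method is therefore from $\tfrac{3}{4}$ to $\tfrac{1}{2}$, not from $1$ to $\tfrac{1}{2}$.
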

\begin{proof}
This follows from \cref{lem:expansion_defined}. If $J  = \scl(r)$ is a bounded scale then this follows because $\p(J+1)=10\p(J)-1$ and thus $(\frac{c(J)+1}2+1) \p(J)= 6 \p(J) < \frac{3}{4} \p(J+1)$.
Similarly if $J=a(k)$ is an unbounded scale, $\p(J+1) = 2(d(k)+1) \p(J) - d(k) \q(J)$ and so $(\frac{2d(k)+2}{2} + 1) \p(J) < \frac{3}{4} \p(J+1)$.
\end{proof}

\begin{lemma}
\label{lem:lot_red_bound}
For every $m \in \Z$ with $\scl(m)$ unbounded we have $2 |\lst(m) \q(\scl(m))| \le \p(\scl(m)-1)$.
\end{lemma}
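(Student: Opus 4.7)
The plan is to bound $|\lst(m)|$, $\q(\scl(m))$, and $\p(\scl(m)-1)$ separately and then assemble them. Write $J = \scl(m)$, so that by hypothesis $J = a(n)$ for some $n \ge 1$.

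First, the greedy condition $2|e(j)| \le c(j)+1$ defining the tower expansion, applied at $j = J$, gives $|\lst(m)| \le (c(a(n)) + 1)/2 = d(n) + 1$. Second, the inductive definition $\Q(n+1) = 2\P(n) - \Q(n)$ combined with the base value $\Q(1) = 1$ yields, by a trivial induction, $0 \le \Q(n) \le 2\P(n-1) = 2\p(a(n-1))$ for every $n \ge 2$. Multiplying these bounds gives $2|\lst(m)\q(J)| \le 4(d(n)+1)\p(a(n-1))$ when $n \ge 2$.

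It remains to show $\p(a(n)-1) \ge 4(d(n)+1)\p(a(n-1))$. I would iterate Lemma~\ref{lem:expansion_defined} along the indices from $a(n-1)$ up to $a(n)-1$. Of the $a(n) - a(n-1) - 1$ resulting factors, exactly one is at the unbounded scale $a(n-1)$ (contributing a multiplier $d(n-1)+2$) while the remaining $a(n) - a(n-1) - 2$ are at bounded scales where $c(k) = 9$ (each contributing a multiplier of at least $6$). Since $a(n) - a(n-1) = 2^{n+4}$, the iterated product is super-exponentially larger than $d(n)+1 \le 2^{n+6}$, and the desired inequality follows comfortably for every $n \ge 2$.

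The main obstacle, such as it is, is the base case $n = 1$: here $\P(0)$ is undefined, so the bound on $\q$ must come directly from $\Q(1) = 1$. The inequality then reduces to $2(d(1) + 1) \le \p(a(1) - 1)$, which is immediate from iterating Lemma~\ref{lem:expansion_defined} over the many bounded scales lying between $\p(1) = 1$ and $\p(63)$.
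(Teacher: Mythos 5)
Your proof is correct and follows essentially the same route as the paper's: bound $|\lst(m)|$ by $(c(\scl(m))+1)/2 = d(n)+1$, bound $\q(\scl(m)) = \Q(n)$ by $2\P(n-1) = 2\p(a(n-1))$ via the recursion $\Q(n) = 2\P(n-1) - \Q(n-1)$, and then absorb the product into $\p(\scl(m)-1)$ using the super-exponential growth of $\p$ over the many bounded scales between $a(n-1)$ and $a(n)-1$. You are somewhat more explicit than the paper in iterating Lemma~\ref{lem:expansion_defined} and in treating the base case $n=1$ separately (where $\lambda(a(1))$, and hence $\P(0)$, is undefined), a point the paper's one-line estimate glosses over.
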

\begin{proof}
Since $|\lst(m)| \le \dfrac{c(\scl(m))+1}{2}$ we can estimate
\[
|\lst(\scl(m)) \q(\scl(m))| \le \dfrac{c(\scl(m)) + 1}{2} \cdot 2 \p(\lambda(\scl(m))) \le 10^{-2^4} \p(\scl(m)-1)
\]
as required.
\end{proof}

When $r$ is at an unbounded scale a definite proportion of points $x$ with $x(\scl(r)) < c(\scl(r))/2$ satisfy $T^{\p(\scl(r))}(x) \approx x$ (see \ref{cor:pos red T left} and  \ref{cor:neg red T left}) and a definite proportion of points $x$ with $x(\scl(r)) > c(\scl(r))/2$ satisfy $T^{\p(\scl(r)) - \q(\scl(r))}(x) \approx x$ (see \ref{cor:pos red T right} and \ref{cor:neg red T right}).
Motivated by this, the next definition will be used to relate $T^r(x)$ for $r$ at unbounded scales to $T^{r'}(x)$ for $r'$ at smaller scales.

\begin{definition}
\label{def:reduction}
Define the \define{left} and \define{right reductions} by
\begin{align*}
\lft(r) &= r - \lst(r) \p(\scl(r)) \\
\rht(r) &= r - \lst(r) \p(\scl(r)) + \lst(r) \q(\scl(r))
\end{align*}
respectively whenever $\scl(r)$ is an unbounded scale. 
\end{definition}

A definite proportion of points $x$ satisfy $T^r(x) \approx T^{\lft(r)}(x)$ and a definite proportion of points $x$ satisfy $T^r(x) \approx T^{\rht(r)}(x)$ whenever $\scl(r)$ is unbounded.

\begin{lemma}
\label{lem:need it later}

Fix $r \in \Z$ with $\scl(r)$ unbounded.
\begin{itemize}

\item
$|\lft(r)| \le \p(\scl(r))$
\item
$|\rht(r)| \le \p(\scl(r)) - \q(\scl(r))$
\end{itemize}
\end{lemma}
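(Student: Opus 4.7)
My plan is to derive both inequalities from two ingredients: the maximality built into the definition of $\scl(r)$, and the explicit recursion relating $\p(\scl(r)+1)$, $\p(\scl(r))$, and $\q(\scl(r))$ at unbounded scales that appears in the proof of \cref{lem:expansion_defined}.

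For the first bullet I would begin by exploiting the maximality clause in the definition of $\scl(r)$: since $\scl(r)$ is the largest scale at which $0$ is not a minimizer of $e \mapsto |r - e\p(\cdot)|$, at scale $\scl(r)+1$ the choice $e=0$ does minimize, which translates to $2|r| \le \p(\scl(r)+1)$. Combined with the upper bound $\p(\scl(r)+1) \le (c(\scl(r))+1)\p(\scl(r))$ read off the formula in the proof of \cref{lem:expansion_defined}, this gives $|r| \le (c(\scl(r))+1)\p(\scl(r))/2$. Hence the nearest integer $e^*$ to $r/\p(\scl(r))$ satisfies $|e^*| \le (c(\scl(r))+1)/2$, so the greedy constraint $2|\lst(r)| \le c(\scl(r))+1$ is not active and $\lst(r) = e^*$ is actually the unconstrained integer minimizer. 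In particular $|\lft(r)| = |r - \lst(r)\p(\scl(r))| \le \p(\scl(r))/2 \le \p(\scl(r))$.

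For the second bullet, write $\rht(r) = \lft(r) + \lst(r)\q(\scl(r))$ and apply the triangle inequality. From the first bullet $|\lft(r)| \le \p(\scl(r))/2$, and from \cref{lem:lot_red_bound} $|\lst(r)\q(\scl(r))| \le \p(\scl(r)-1)/2$. Since $\scl(r) = a(n)$ is unbounded while $\scl(r)-1 = a(n)-1$ is bounded, the recursion in the proof of \cref{lem:expansion_defined} gives $\p(\scl(r)) = 10\,\p(\scl(r)-1)$, so $|\rht(r)| \le 11\,\p(\scl(r))/20$. On the other hand $\q(\scl(r)) = 2\P(n-1) - \Q(n-1) \le 2\p(a(n)-1) = \p(\scl(r))/5$, so $\p(\scl(r)) - \q(\scl(r)) \ge 4\p(\scl(r))/5$, which comfortably dominates $11\p(\scl(r))/20$. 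The only subtlety I anticipate is confirming that the greedy constraint does not truncate the true integer minimizer for $\lst(r)$ at unbounded scales; once that check is in hand, both bounds reduce to direct arithmetic with the tower-height recursions already recorded in the paper.
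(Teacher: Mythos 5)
Your proposal is correct, and the first bullet takes a genuinely different route from the paper. The paper proves $|\lft(r)| \le \p(\scl(r))$ by viewing $\lft(r) = \sum_{j < \scl(r)} e(j)\p(j)$ as a tail of the tower expansion and applying \cref{lem:expansion_bound} with $J = \scl(r)-1$, then finishing with \cref{lem:expansion_defined}. You instead use the maximality clause in the definition of $\scl(r)$ to get $2|r| \le \p(\scl(r)+1)$, deduce that the unconstrained nearest-integer minimizer is admissible (so the greedy constraint is slack), and conclude $|\lft(r)| \le \p(\scl(r))/2$. Your route is self-contained relative to the formal definition of $\scl$ and $\lst$, proves along the way the implicit claim that the minimizer respects $2|\lst(r)| \le c(\scl(r))+1$, and yields the sharper constant $1/2$. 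The paper's route is shorter because it outsources the work to already-established lemmas. For the second bullet both proofs are essentially the same (triangle inequality plus \cref{lem:lot_red_bound} plus the explicit recursion), though your numerics are a bit loose: $\p(\scl(r)) = 10\p(\scl(r)-1) - 1$, not $10\p(\scl(r)-1)$; and the bound $\q(\scl(r)) \le 2\P(n-1) = 2\p(a(n-1))$ should not be collapsed to $2\p(a(n)-1) = \p(\scl(r))/5$ with an equality sign ($a(n-1)$ is much smaller than $a(n)-1$, and the last quantity is $(\p(\scl(r))+1)/5$, not $\p(\scl(r))/5$). Since all slips go in the favorable direction, the final comparison $11\p(\scl(r))/20 \lesssim 4\p(\scl(r))/5$ survives and the argument is sound.
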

\begin{proof}

For the first bullet, apply \cref{lem:expansion_bound} with $J = \scl(r) - 1$.
For the second bullet, apply \cref{lem:expansion_bound} with $J = \scl(r) - 1$ and \cref{lem:lot_red_bound}.
\end{proof}

The following lemmas describe how reductions and scales interact.

\begin{lemma}
\label{lem: reduction not both small}
Fix $m \in \Z$ with $\scl(m)$ an unbounded scale. 
We have
\[
\max\{\scl(\lft(m)), \scl(\rht(m))\} \ge \lambda(\scl(m))
\]
and if $\lft(m)$ and $\rht(m)$ are both at an unbounded scale then neither is at scale $\scl(m)$  and at least one of $\scl(\lft(m)) = \lambda(\scl(m))$ and $\scl(\rht(m)) = \lambda(\scl(m))$ holds. 
\end{lemma}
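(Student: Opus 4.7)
The plan is to let $\scl(m) = a(n)$ and organize the argument around the identity $\rht(m) - \lft(m) = \lst(m)\,\q(\scl(m))$ together with the magnitude bounds of \cref{lem:need it later} and \cref{lem:lot_red_bound}.

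Before splitting into cases I would record two unconditional facts. By \cref{cor:obvioius_thing}, $|m| \le \tfrac{3}{4}\p(\scl(m)+1)$, and using \cref{lem:expansion_defined} together with the standing lower bound $c(j)\ge 9$ one checks that the nearest integer to $m/\p(\scl(m))$ lies in the allowed range $|e|\le (c(\scl(m))+1)/2$. Hence the greedy choice of $\lst(m)$ attains the unconstrained minimum of $|m - e\p(\scl(m))|$ over $e\in\Z$, so $|\lft(m)| \le \tfrac{1}{2}\p(\scl(m))$, which forces $\scl(\lft(m)) \le \scl(m)-1$. By \cref{lem:need it later}, $|\rht(m)| \le \p(\scl(m)) - \q(\scl(m)) < \p(\scl(m))$, which combined with \cref{lem:expansion_defined} gives $\scl(\rht(m)) \le \scl(m)$.

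For the max inequality I would argue by contradiction. If both $\scl(\lft(m))$ and $\scl(\rht(m))$ were strictly smaller than $a(n-1)$, then \cref{cor:obvioius_thing} would give $|\lft(m)|, |\rht(m)| \le \tfrac{3}{4}\p(a(n-1))$, hence $|\rht(m) - \lft(m)| \le \tfrac{3}{2}\p(a(n-1))$. Since $\rht(m) - \lft(m) = \lst(m)\,\Q(n)$ and $|\lst(m)|\ge 1$, this would force $\Q(n) \le \tfrac{3}{2}\p(a(n-1))$. But unrolling $\Q(n) = 2\P(n-1) - \Q(n-1)$ and using the extremely rapid growth of $\P$ built into \ref{p:summing_gaps} yields $\Q(n-1) \ll \P(n-1)$, so $\Q(n) \ge (2-o(1))\P(n-1) > \tfrac{3}{2}\p(a(n-1))$, a contradiction.

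Now assume both reductions sit at unbounded scales. The preliminary bound $\scl(\lft(m))\le \scl(m)-1$ already rules out $\scl(\lft(m))=\scl(m)$, so the task is to rule out $\scl(\rht(m))=\scl(m)$. If that equality held, the definition of $\scl$ forces $|\rht(m)|>\tfrac{1}{2}\p(\scl(m))$, and the triangle inequality together with \cref{lem:lot_red_bound} yields
\[
|\lft(m)| \ge |\rht(m)| - |\lst(m)\,\q(\scl(m))| > \tfrac{1}{2}\p(a(n)) - \tfrac{1}{2}\p(a(n)-1).
\]
Using $\p(a(n)) = 10\,\p(a(n)-1)-1$ this exceeds $4\,\p(a(n)-1)$, which combined with $|\lft(m)|\le \tfrac{1}{2}\p(a(n))$ and $\scl(\lft(m))\le a(n)-1$ pins $\scl(\lft(m)) = a(n)-1$, a bounded scale, contradicting the hypothesis. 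Finally, combining with the first assertion, at least one of $\scl(\lft(m)),\scl(\rht(m))$ lies in $[a(n-1),a(n)-1]$; being unbounded and with $a(n-1)=\lambda(\scl(m))$ the only unbounded scale in that interval, it must equal $\lambda(\scl(m))$. The main obstacle is the triangle-inequality step used to rule out $\scl(\rht(m))=\scl(m)$: the gap $\tfrac{1}{2}\p(\scl(m)) - \tfrac{1}{2}\p(\scl(m)-1)$ must be wide enough to trap $\scl(\lft(m))$ at the bounded scale $a(n)-1$, which hinges on the tenfold jump from $\p(a(n)-1)$ to $\p(a(n))$ and on $\q(\scl(m))$ being small relative to $\p(\scl(m)-1)$.
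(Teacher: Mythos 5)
Your argument is structurally the same as the paper's: for the max inequality you bound $|\rht(m)-\lft(m)| = |\lst(m)|\q(\scl(m))$ from below and argue by contradiction rather than directly, and for the second part you derive $\scl(\rht(m)) \ne \scl(m)$ by the contrapositive of the paper's step (the paper assumes $\scl(\lft(m)) \le \lambda(\scl(m))$ and deduces $|\rht(m)|$ is small; you assume $\scl(\rht(m)) = \scl(m)$ and deduce $\scl(\lft(m))$ is pinned at the bounded scale $\scl(m)-1$). Both are fine and essentially identical in content.

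There is, however, a genuine gap in the preliminary step where you try to justify $|\lft(m)| \le \tfrac12\p(\scl(m))$ by checking that the nearest integer to $m/\p(\scl(m))$ lies in the allowed range. The bound $|m| \le \tfrac34\p(\scl(m)+1)$ from \cref{cor:obvioius_thing} gives $|m|/\p(\scl(m)) \lesssim \tfrac34(c(\scl(m))+1)$, so the nearest-integer bound you get is roughly $\tfrac34(c+1)+\tfrac12$, which for $c=9$ is about $8$, well outside the allowed range $|e|\le(c+1)/2 = 5$. So \cref{cor:obvioius_thing} is too weak for the "one checks" claim as written. The fix is cheap: the defining property of $\scl(m)$ (that $e=0$ minimizes $|m-e\p(j)|$ for $j>\scl(m)$) directly gives the sharper $|m|\le\tfrac12\p(\scl(m)+1)$, and since $\p(\scl(m)+1)<(c(\scl(m))+1)\p(\scl(m))$ and $c(\scl(m))$ is odd, the nearest integer is then $\le (c(\scl(m))+1)/2$. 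Alternatively, you can avoid the $\tfrac12$ bound on $\lft(m)$ altogether: $\scl(\lft(m))<\scl(m)$ follows immediately because $\lst(m)$ is already a minimizer of $e\mapsto|m-e\p(\scl(m))|$ (so $e=0$ minimizes $e\mapsto|\lft(m)-e\p(\scl(m))|$), and $|\lft(m)|\le|m|$ handles $j>\scl(m)$. Everything downstream of this point in your argument goes through.
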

\begin{proof}
Since
\begin{equation}\label{eq:differnce in reduction}
\rht(m) - \lft(m) = \lst(m) \q(\scl(m)) = 2 \lst(m) \p(\lambda(\scl(m))) - \lst(m) \q(\lambda(\scl(m)))
\end{equation}
we have in general
\begin{equation}\label{eq:one correct}
\max\{|\rht(m)|,|\lft(m)|\}\geq \tfrac{3}{4} \p(\lambda(m))
\end{equation}
and so the larger of $\rht(m)$, $\lft(m)$ in absolute value
cannot be at a scale smaller than $\lambda(\scl(m))$, establishing the first claim.

We now assume that both $\lft(m),\rht(m)$ are at unbounded scales.
It is immediate that (in general) $\scl(\lft(m)) < \scl(m)$.
As $\lft(m)$ is assumed to be at an unbounded scale we must have $\scl(\lft(m)) \le \lambda(\scl(m))$.
That implies
\[
|\lft(m)| \le \dfrac{\p(\lambda(\scl(m))+1)}{2}
\]
and, as we have
\[
|\lst(m) \q(\scl(m))| \le \p(\scl(m)-1)
\]
from \cref{lem:lot_red_bound}, the triangle inequality gives
\[
2|\rht(m)| \le \p(\lambda(\scl(m))+1) + 2\p(\scl(m)-1) < \p(\scl(m))
\]
which means $\scl(\rht(m)) < \scl(m)$. 
As $\rht(m)$ is also assumed to be at an unbounded scale we have $\scl(\rht(m)) \le \lambda(\scl(m))$ as well.  Thus when they are both at an unbounded scale,  it must be that one of $\rht(m)$ and $\lft(m)$ is at scale $\lambda(\scl(m))$.
\end{proof}

\begin{lemma}
\label{lem:reduction LR}
Let $m \in \Z$ be at an unbounded scale $a(b)$.
At least one of $\scl(\rht(m)) < \scl(m)$ and $|\rht(m)|<\tfrac{3}{4}\p(\scl(m))$ is true. Also $\scl(\lft(m)) < \scl(m)$. 
\end{lemma}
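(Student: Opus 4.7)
The plan is to deduce both assertions from a single core estimate: $|\lft(m)| \le \p(\scl(m))/2$.

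The main obstacle is establishing this core estimate, because at first glance the greedy constraint $2|\lst(m)| \le c(\scl(m))+1$ could prevent $\lst(m)$ from being the unconstrained nearest-integer reduction. To handle this, observe that by the greedy definition of $\scl(m)$, at scale $j = \scl(m)+1$ the map $e \mapsto |m - e\p(j)|$ must be minimized at $e = 0$; comparing with $e = \pm 1$ then yields $|m| \le \p(\scl(m)+1)/2$. Since $\scl(m) = a(b)$ is unbounded, the formula derived in the proof of \cref{lem:expansion_defined} gives $\p(\scl(m)+1) = (c(\scl(m))+1)(\p(\scl(m)) - \q(\scl(m))/2)$, from which $|m|/\p(\scl(m)) < (c(\scl(m))+1)/2$ strictly. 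Consequently the integer $e^*$ nearest to $m/\p(\scl(m))$ obeys $2|e^*| \le c(\scl(m))+1$, so it coincides with the greedy choice $\lst(m)$, and $|\lft(m)| = |m - e^*\p(\scl(m))| \le \p(\scl(m))/2$ follows.

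The claim $\scl(\lft(m)) < \scl(m)$ is then immediate: for every $j \ge \scl(m)$ and every $e \ne 0$ satisfying the greedy constraint, the reverse triangle inequality gives $|\lft(m) - e\p(j)| \ge \p(j) - |\lft(m)| \ge \p(\scl(m))/2 \ge |\lft(m)|$, so $e = 0$ is always a minimizer. Hence no scale $j \ge \scl(m)$ witnesses a reduction of $\lft(m)$, forcing $\scl(\lft(m)) < \scl(m)$.

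For the disjunction concerning $\rht(m) = \lft(m) + \lst(m)\q(\scl(m))$, I combine the core estimate with \cref{lem:lot_red_bound} via the triangle inequality to obtain $|\rht(m)| \le \p(\scl(m))/2 + \p(\scl(m)-1)/2$. Since $\scl(m)-1 = a(b)-1$ is a bounded scale with $c(\scl(m)-1) = 9$, the bounded-scale recursion from the proof of \cref{lem:expansion_defined} gives $\p(\scl(m)) = 10\p(\scl(m)-1) - 1$, so $\p(\scl(m)-1) \le \p(\scl(m))/9$. Substituting collapses the bound to $|\rht(m)| \le \tfrac{5}{9}\p(\scl(m)) < \tfrac{3}{4}\p(\scl(m))$. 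Thus the second alternative of the disjunction holds unconditionally, which proves the required "at least one of" statement.
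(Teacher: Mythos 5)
Your proof is correct and fills in the paper's one-line sketch in the natural way: the greedy choice of $\lst(m)$ (together with your observation that the constraint $2|\lst(m)| \le c(\scl(m))+1$ cannot bind, since $|m| \le \p(\scl(m)+1)/2 < \tfrac{c(\scl(m))+1}{2}\p(\scl(m))$) yields $|\lft(m)| \le \p(\scl(m))/2$, and \cref{lem:lot_red_bound} then controls $\rht(m)-\lft(m) = \lst(m)\q(\scl(m))$, which is exactly the ``$(2d(b)+2)\q(\scl(m))$ is small'' observation the paper alludes to. In fact you establish the unconditional bound $|\rht(m)| \le \tfrac{5}{9}\p(\scl(m))$, so the second disjunct always holds; this is formally stronger than the disjunction as stated.
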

\begin{proof}
This follows by the greedy algorithm to define $\p$ and the fact that $(2d(b)+2)\q(\scl(m))$ is much smaller than  $\tfrac{1}{8} \p(\scl(m))$.
\end{proof}

\subsection{Some dynamical properties}

We mention here briefly some dynamical properties of $(X,\mu,T)$ that follow quickly from the construction.

\subsubsection{Partial rigidity}
The task of proving $(X,\mu,T)$ is mild mixing is made difficult by partial rigidity in the system, which we remark upon here. 
Recall that $(X,\mu,T)$ is \define{partially rigid} if there is a constant $\delta > 0$ such that, for every measurable $A \subset X$ one has $\mu(A \cap T^n A) \ge \delta \mu(A)$ infinitely often.
Using Lusin's theorem, one can deduce partial rigidity from the following proposition.

\begin{proposition}
For all $\epsilon>0$
\[
\lim_{n \to \infty} \mu(\{x\in X: d(T^{\p(n)}x,x)<\epsilon\})\geq \frac{1}{9}
\]
holds.
\end{proposition}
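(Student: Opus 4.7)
The plan is to exhibit, for each sufficiently large $n$, an explicit subset of $X$ of $\mu$-measure at least $1/9$ on which $T^{\p(n)}$ agrees with the identity up to $\metric$-scale $\epsilon$. Fix $\epsilon > 0$ and choose $N$ with $2^{-N} < \epsilon$. For $n \geq N$, it is enough to produce a subset of $X$ on which $T^{\p(n)}(x)$ and $x$ share their first $n-1$ coordinates, since for such $x$ one has $\metric(T^{\p(n)}(x), x) \leq 2^{-n} < \epsilon$.

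The core of the argument uses the scale-$n$ picture of $Y_n$ (cf.\ Figures~\ref{fig:boundedRemoval} and \ref{fig:unbounded_removal}): the length-$n$ cylinders of $Y_n$ partition into columns indexed by the $n$-th coordinate, and within each column the $S|_{Y_n}$-orbit visits cylinders in lexicographic order of their first $n-1$ coordinates. Because the removals $W_1, \ldots, W_{n-1}$ are independent of the $n$-th coordinate, they excise the same first-$(n{-}1)$-coordinate tuples from every column; hence on any two columns of common height $\p(n)$ sharing the same further removals at scale $n$, the map $(S|_{Y_n})^{\p(n)}$ carries the cylinder at ``position $k$'' in the first column to the cylinder at ``position $k$'' in the second, preserving the first $n-1$ coordinates. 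For bounded $n$ this applies to columns $j \in \{0,1,\ldots,6\}$, all of which have full height $\p(n)$ and none of which are touched by $W_n = [7^{n-1}8]$ (which sits in column $8$); this yields $\nu$-measure at least $7/10$. For unbounded $n = a(m)$ it applies to columns $j \in \{0, 1, \ldots, d(m)-1\}$, since columns $0,\ldots,d(m)$ all have full height $\P(m)$ while columns $d(m)+1,\ldots,2d(m)+1$ have their top $\Q(m)$ levels removed, yielding $\nu$-measure $d(m)/(2d(m)+2) \to 1/2$.

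To pass from $(S|_{Y_n})^{\p(n)}$ to $T^{\p(n)}$ I would note that the two coincide at $x$ whenever the forward $S|_{Y_n}$-orbit of $x$ of length $\p(n)$ avoids $\bigcup_{m > n} W_m$. The complementary set has $\nu$-measure at most $\p(n) \sum_{m > n} \nu(W_m)$, which by \eqref{eqn:pn_bound} and the estimates in the proof of \cref{lem:removed_mass} is $O(1/(c(n)+1)) = o(1)$ as $n \to \infty$. Combining with the previous paragraph and using $\mu(A) \geq \nu(A)$ for $A \subseteq X$ (since $\nu(X) \leq 1$), the resulting ``good'' set has $\mu$-measure at least $1/2 - o(1)$, well above $1/9$ for all large $n$.

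The hard part will be rigorously establishing the ``position-preserving'' claim above -- that $(S|_{Y_n})^{\p(n)}$ really transports the lex labeling intact between columns whose heights and additional scale-$n$ removals coincide. The transitions adjacent to a short column, namely $7 \to 8$ at bounded scales and $d(m) \to d(m)+1$ at unbounded scales, need careful treatment (and in fact contribute further good measure), but they can be safely discarded for the crude bound $1/9$.
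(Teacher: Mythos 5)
Your core strategy — showing that $(S|Y_n)^{\p(n)}$ preserves the first $n-1$ coordinates on a definite collection of columns in the scale-$n$ picture — is the right one and is essentially the paper's approach (the paper phrases it as $T^{\p(n)}(x) = S^{t(n-1)}(x)$ on a positive-measure set). The column bookkeeping (columns $0,\dots,6$ at bounded scales, columns $0,\dots,d(m)-1$ at unbounded scales $a(m)$) is correct.

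However, there is a genuine gap in the passage from $(S|Y_n)^{\p(n)}$ to $T^{\p(n)}$. You bound the set where the two maps might differ by a union bound, claiming its $\nu$-measure is at most $\p(n)\sum_{m>n}\nu(W_m)$ and that this is $O(1/(c(n)+1)) = o(1)$. This estimate is false: the quantity $\p(n)\sum_{m>n}\nu(W_m)$ is not only not $o(1)$, it diverges as $n\to\infty$. Concretely, take $n = a(k)-1$. Then $\p(n)$ is comparable to $t(a(k)-2)$, while $\nu(W_{a(k)}) = (d(k)+1)\Q(k)/t(a(k))$, and a short computation gives $\p(n)\nu(W_{a(k)}) \approx \Q(k)/20 \approx \P(k-1)/10$, which is an enormous and unbounded number. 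The union bound massively overcounts because the $(\p(n)+1)$ translates $(S|Y_n)^{-i}W_{a(k)}$, $0\le i\le\p(n)$, overlap heavily: $W_{a(k)}$ consists of $d(k)+1$ clumps of $\Q(k)$ consecutive levels each, and an $S|Y_n$-orbit of length $\p(n)\ll t(a(k)-1)$ is confined to at most two columns at scale $a(k)$, so it can intersect at most one clump. A correct estimate must use this structure; for instance, the measure of $x$ whose scale-$n$ orbit meets $W_{a(k)}$ is controlled by roughly $(\p(n)+\Q(k))/t(a(k)-1)$, which does decay geometrically in $k$ (this is the kind of count the paper carries out in the rank-one proof, where it notes that at most $d(m)+1$ cylinders of scale $a(m)$ are hit by the full orbit of the base). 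For the bounded removals $W_m = [7^{m-1}8]$, $m>n$, the union bound does work since each is a single cylinder hit at most once, and gives a contribution $\lesssim 1/9$. As a secondary point, even if your bound were valid it could not be $o(1)$ as $n\to\infty$: for bounded $n$ one has $1/(c(n)+1) = 1/10$, which does not tend to $0$; this would still leave enough room for the final $\ge 1/9$ conclusion, but it is a separate error in the write-up.
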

Let $k_x:\mathbb{Z} \to \mathbb{Z}$ by $k_x(n)=m$ where $S^m(x) = T^{n}(x)$.
It is not hard to check that
\[
\nu(\{x \in X\subset \Omega:k_x(\p(n))=t(n-1)\})\geq \frac{1}{9}
\]
for all large enough $n$. 
Indeed, this follows from estimating $1-t(n-1)\nu(\bigcup_{k\geq n}W_k).$ Note that there are two cases, one where $n$ is at an unbounded scale and one where $n$ is at a bounded scale.

\subsubsection{Weak mixing}

\begin{proposition}
    $(X,\mu,T)$ is weakly mixing.
\end{proposition}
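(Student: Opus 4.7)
The plan is to show that $(X,\mu,T)$ has no $\ltwo$ eigenvalues other than $1$; combined with the already-established ergodicity of $T$ this gives weak mixing. Suppose $f \in \ltwo(X,\mu)$ is an eigenfunction satisfying $f \circ T = \lambda f$ for some $\lambda$ with $|\lambda|=1$. Since $|f|$ is then $T$-invariant, ergodicity forces $|f|$ to be essentially constant, so after normalization I may assume $|f| \equiv 1$.

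The first step is to show $\lambda^{\p(n)} \to 1$. By the partial rigidity proposition, for any $\epsilon > 0$ the set $A_n(\epsilon) = \{x \in X : \metric(T^{\p(n)} x, x) < \epsilon\}$ satisfies $\mu(A_n(\epsilon)) \geq 1/10$ for all sufficiently large $n$. Given $\eta > 0$, Lusin's theorem produces a compact $K \subset X$ with $\mu(K) > 1 - \eta$ on which $f$ is uniformly continuous; I then choose $\epsilon$ small enough that $x,y \in K$ with $\metric(x,y) < \epsilon$ implies $|f(x) - f(y)| < \eta$. For $\eta$ sufficiently small and $n$ large the intersection $A_n(\epsilon) \cap K \cap T^{-\p(n)} K$ has positive measure, and at any point $x$ in it the identity $f(T^{\p(n)} x) = \lambda^{\p(n)} f(x)$ combined with $|f(x)| = 1$ gives
\[
|\lambda^{\p(n)} - 1| = |f(T^{\p(n)} x) - f(x)| < \eta.
\]
Since $\eta$ was arbitrary, $\lambda^{\p(n)} \to 1$.

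The second step is to pin down $\lambda$. For every bounded scale $n$ the recursion recorded in the proof of \cref{lem:expansion_defined} gives $\p(n+1) = 10 \p(n) - 1$, and bounded scales are cofinite in $\N$. Passing to the limit through any sequence of bounded scales in the identity $\lambda = (\lambda^{\p(n)})^{10} \cdot \lambda^{-\p(n+1)}$ therefore yields $\lambda = 1$, so $f$ is $T$-invariant and hence constant by ergodicity. It follows that $(X,\mu,T)$ has no non-constant eigenfunctions and is weakly mixing.

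The main technical point is the transfer from the metric statement of partial rigidity to the $\ltwo$ conclusion $\lambda^{\p(n)} \to 1$; this is routine once one normalizes $|f| \equiv 1$ using ergodicity and uses the uniform positive lower bound on $\mu(A_n(\epsilon))$ provided by partial rigidity. The remaining step is then purely arithmetic, exploiting the abundance of consecutive bounded scales to turn the convergence $\lambda^{\p(n)} \to 1$ into $\lambda = 1$.
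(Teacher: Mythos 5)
Your proof is correct and takes a genuinely different route from the paper's. The paper picks two explicit families of sets $A(k)=\{x:x(k)=1\}\setminus\bad(k,\p(k))$ and $B(k)=\{x:x(k)=8,\,x(k-1)<7\}\setminus\bad(k,\p(k))$, each of measure $>1/99$, on which $T^{\p(k)}$ is metrically close to $\id$ and to $T$ respectively; an eigenfunction with $|f|\equiv 1$ and eigenvalue $\lambda$ then simultaneously forces $\lambda^{\p(k)}\to 1$ and $\lambda^{\p(k)}\to\lambda$, hence $\lambda=1$. You instead get $\lambda^{\p(n)}\to 1$ for free from the already-stated partial rigidity proposition (the ``$A(k)$ half''), and then close the gap not with a second family of sets but with the arithmetic identity $1 = 10\p(n) - \p(n+1)$ for $n$ a bounded scale, which follows from the recursion recorded in the proof of \cref{lem:expansion_defined}. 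Since $\lambda^{\p(n)}\to 1$ along \emph{all} $n$, both $\lambda^{\p(n)}$ and $\lambda^{\p(n+1)}$ tend to $1$, so $\lambda = (\lambda^{\p(n)})^{10}(\lambda^{\p(n+1)})^{-1}\to 1$. Your approach reuses existing structure (partial rigidity plus the numeration arithmetic) rather than building a second rigidity-like family, which is slightly more economical; the paper's approach is more self-contained and parallels the cited lemma in \cite{MR4251942}. One small imprecision: the set of bounded scales is not cofinite in $\N$ (the unbounded scales $\{a(j)\}$ form an infinite set), but it is infinite, which is all your limiting argument actually needs.
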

\begin{proof}
Observe that it suffices to show that for $k$ so that $k$ and $k-1$ are both bounded scales we have that there exists measurable $A(k)$, $B(k) \subset X$ with $\mu(A(k)),\mu(B(k))>\frac 1 {99}$, and 
\[
\underset{k \to \infty}{\lim}\sup\{d(T^{\p(k)}x,x):x\in A(k)\}=0=\underset{k \to \infty}{\lim}\sup\{d(T^{\p(k)}x,Tx):x\in B(k)\}
\]
holds.
(This is standard, but see for example \cite[Lemma~3.2]{MR4251942} for a closely related result with similar proof.) Let $A(k)=\{x:x(k)=1\} \setminus \bad(k,\p(k))$ and $B(k)=\{x:x(k)=8, x(k-1)<7\}\setminus \bad(k,\p(k))$. We leave it as an exercise to the reader to check that this meets the assumptions. 
\end{proof}

\section{The rank one property}
\label{sec:rank one}

We prove here that our system is rank one.

\begin{proof}[Proof of \cref{thm:rank one}]
Fix $A \subset X$ measurable and $\epsilon > 0$.
For each $n \in \N$ let $h(n) \in \N$ be minimal with the property that $(S|Y_{a(n)})^{h(n)} [8 7^{a(n)-1}]$ is the lexicographically largest cylinder of length $a(n)$ in $Y_{a(n)}$.
Certainly $h(n) < t(a(n))$.
All subsequent removals $W_k$ with $k \not\in \{a(m) : m > n \}$ are contained within $[7^{a(n)}]$.
Let
\[
F_n=[87^{a(n)-1}]\setminus \bad \bigg(a(n),h(n)\bigg) \subset X
\]
which is shown in Figure~\ref{fig:rank_one}.
We now bound the measure removed by removing $\bad(a(n),h(n))$.
For each $m > n$  we remove from $[8 7^{a(n)-1}]$ at most $d(m) + 1$ cylinders of length $a(m)$.
Thus
\begin{align*}
& \mu(X \setminus F_n \cup T^1 F_n \cup \cdots \cup T^{h(n)-1} F_n)
\\
\le
{}&
\dfrac{1}{\nu(X)} \left( 8 \P(n) \nu([0^{a(n)}]) + \sum_{m > n} h(n) \dfrac{d(m)+1}{t(a(m))} \right)
\\
\le
{}&
\dfrac{8}{\nu(X)} \dfrac{t(a(n)-1)}{t(a(n))} + \dfrac{2}{\nu(X)} \sum_{m > n} \dfrac{t(a(n)) d(m)}{t(a(m))}
\\
\le{}
&
\dfrac{8}{\nu(X)} \dfrac{1}{2d(n)+2} + \dfrac{2}{\nu(X)} \sum_{m > n} \dfrac{1}{10^{a(m) - a(n)-1}}
\end{align*}
where the term $8\P(n) \nu([0^{a(n)}])$ accounts for the cylinders of length $a(n)$ that precede $[87^{a(n)-1}]$ lexicographically and we have used $\P(n) \le t(a(n)-1)$.
We thus have
\[
\mu(F_n \cup T F_n \cup \cdots \cup T^{h(n)} F_n) \to 1
\]
as $n \to \infty$.

Choose $n$ so large that there exists cylinders $C_1,...,C_t\subset Y$ with defining coordinates $1,....,a(n)$ and
\begin{itemize}
\item
$\mu \Big( A \triangle (X \cap C_1 \cup \cdots \cup X \cap C_t) \Big) < \epsilon/100 \nu(X)$
\item
$8 \P(n) \nu([0^{a(n)}]) < \epsilon/10$
\item
$\mu(X \setminus F_n \cup T^1 F_n \cup \cdots \cup T^{h(n)-1} F_n) < \epsilon/2$
\end{itemize}
all hold.
Then the $\mu$ measure of the intersection of $A$ with the union of all cylinders that precede $[87^{a(n) - 1}]$ lexicographically is at most $\epsilon/10 \nu(X)$.
For $0 \le i < h(n)$ put $A_i = T^i F_n$ if
\[
(C_1 \cup \cdots \cup C_t) \cap T^i F_n \ne \emptyset
\]
and $A_i = \emptyset$ otherwise.
The set $A' = A_0 \cup \cdots \cup A_{h(n)-1}$ is measurable with respect to the $\sigma$-algebra generated by $\{ F_n, T F_n, \dots, T^{h(n)-1} F_n \}$ and $\mu(A' \triangle A) < \epsilon$.
\end{proof}

\begin{figure}[t]

\centering

\begin{tikzpicture}[yscale=0.5, line width=0.5mm]

\begin{scope}[shift={(-4.5,0)}]
\node at (0,4) {$87 \cdots 77$};
\node at (0,3) {$77 \cdots 77$};
\node at (0,2) {$67 \cdots 77$};
\node at (0,0) {$10 \cdots 00$};
\node at (0,-1) {$00 \cdots 00$};
\end{scope}

\begin{scope}[shift={(-3,0)}]
\draw (0,9) -- (1,9);
\node at (0.5,8.2) {$\vdots$};
\draw (0,7) -- (1,7);
\draw (0,6) -- (1,6);
\node at (0.5,5.2) {$\vdots$};
\draw (0,4) -- (1,4);
\draw (0,3) -- (1,3);
\draw (0,2) -- (1,2);
\node at (0.5,1.2) {$\vdots$};
\draw (0,0) -- (1,0);
\draw (0,-1) -- (1,-1);
\node at (0.5,-2) {$0$};
\end{scope}

\begin{scope}[shift={(-1.5,0)}]
\node at (0.5,8) {$\cdots$};
\node at (0.5,5) {$\cdots$};
\node at (0.5,1) {$\cdots$};
\end{scope}

\begin{scope}[shift={(0,0)}]
\draw (0,9) -- (1,9);
\node at (0.5,8.2) {$\vdots$};
\draw (0,7) -- (1,7);
\draw (0,6) -- (1,6);
\node at (0.5,5.2) {$\vdots$};
\draw (0,4) -- (1,4);
\draw[dashed] (0,3) -- (1,3);
\draw (0,2) -- (1,2);
\node at (0.5,1.2) {$\vdots$};
\draw (0,0) -- (1,0);
\draw (0,-1) -- (1,-1);
\node at (0.5,-2) {$7$};
\end{scope}

\begin{scope}[shift={(1.5,0)}]
\node at (0.5,8) {$\cdots$};
\node at (0.5,5) {$\cdots$};
\node at (0.5,1) {$\cdots$};
\end{scope}

\begin{scope}[shift={(3,0)}]
\draw (0,9) -- (1,9);
\node at (0.5,8.2) {$\vdots$};
\draw (0,7) -- (1,7);
\draw (0,6) -- (1,6);
\node at (0.5,5.2) {$\vdots$};
\draw (0,4) -- (1,4);
\draw (0,3) -- (1,3);
\draw (0,2) -- (1,2);
\node at (0.5,1.2) {$\vdots$};
\draw (0,0) -- (1,0);
\draw (0,-1) -- (1,-1);
\node at (0.5,-2) {$d(n)$};
\end{scope}

\begin{scope}[shift={(4.5,0)}]
\draw[color=red] (0,9) -- (1,9);
\draw[color=red] (0.4,8.7) -- (0.6,9.3);
\node at (0.5,8.2) {$\vdots$};
\draw[color=red] (0,7) -- (1,7);
\draw[color=red] (0.4,6.7) -- (0.6,7.3);
\draw (0,6) -- (1,6);
\node at (0.5,5.2) {$\vdots$};
\draw (0,4) -- (1,4);
\draw (0,3) -- (1,3);
\draw (0,2) -- (1,2);
\node at (0.5,1.2) {$\vdots$};
\draw (0,0) -- (1,0);
\draw (0,-1) -- (1,-1);
\node at (0.5,-2) {$d(n)+1$};
\end{scope}

\begin{scope}[shift={(6,0)}]
\node at (0.5,8) {$\cdots$};
\node at (0.5,5) {$\cdots$};
\node at (0.5,1) {$\cdots$};
\end{scope}

\begin{scope}[shift={(7.5,0)}]
\draw[color=red] (0,9) -- (1,9);
\draw[color=red] (0.4,8.7) -- (0.6,9.3);
\node at (0.5,8.2) {$\vdots$};
\draw[color=red] (0,7) -- (1,7);
\draw[color=red] (0.4,6.7) -- (0.6,7.3);
\draw (0,6) -- (1,6);
\node at (0.5,5.2) {$\vdots$};
\draw (0,4) -- (1,4);
\draw (0,3) -- (1,3);
\draw (0,2) -- (1,2);
\node at (0.5,1.2) {$\vdots$};
\draw (0,0) -- (1,0);
\draw (0,-1) -- (1,-1);
\node at (0.5,-2) {$2d(n)+1$};
\end{scope}

\end{tikzpicture}

\caption{The picture of $Y_{a(n)-1}$ at scale $a(n)$. The marked cylinders make up $W_{a(n)}$. The dashed cylinder contains all $W_j$ with $j > a(n)$ and $j$ a bounded scale. A subset of $[87^{a(n)-1}]$ will be used as the set $F$ exhibiting $T$ as rank one.}
\label{fig:rank_one}
\end{figure}
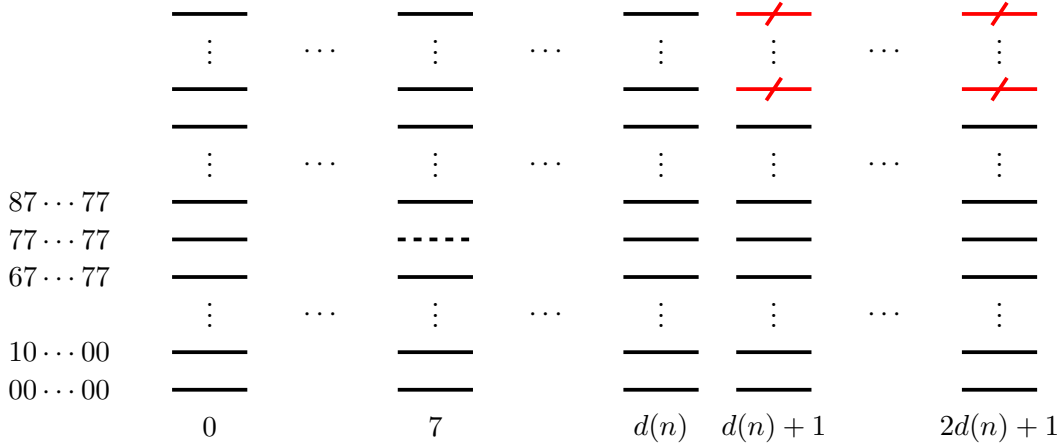

\begin{remark}
\label{rem:rank two}
It is natural to try to determine whether our procedure can be modified to construct a mildly mixing rank 1 transformation $T$ with the property that $T^n$ has rank two for all $n\geq 2$.
(Note that $T$ being mild mixing precludes $T^n$ from being rank one: indeed $T$ is in its centralizer and were $T^n$ rank one it could be concluded that $T$ is rigid by the weak closure theorem. We thank Mariusz Lemanczyk for pointing this out.)
Indeed, one could try to modify our construction (perhaps by removing some extra levels at certain bounded scales and changing the definition of $\Q(n)$) so that for every $n$ there are infinitely many $k$ so that at the unbounded scales $a(k)$ we have $\P(k)$ and $\P(k)-\Q(k)$ both relatively prime to $n$. One could use the following definitions.
$$F_k=\bigcup_{i=0}^{n-1}(S|Y_{a(k)})^i[87^{a(k)-1}]\setminus \bad(a(k),\p(k)+2)$$
$$F_k=\bigcup_{i=0}^{n-1}(S|Y_{a(k)})^i[87^{a(k)-2}d(k)+2]\setminus \bad(a(k),\p(k)+2).$$
Letting $$h=\min\{i>0:(S|Y_{a(k)})^{in}F \cap [0^{a(k)-1}d(k)] \neq \emptyset\}-1$$ and 
$$h'=\min\{i>0:(S|Y_{a(k)})^{in}F' \cap [0^{a(k)}] \neq \emptyset\}-1$$ we see our transformation is rank 2. 

Having done the modification to the construction, one still needs to check that the proof of mild mixing goes through.
We do not pursue this here.
\end{remark}

\section{An ergodic joining}
\label{sec:ergodic joining}

The purpose of this section is to construct an ergodic self-joining of $(X,\mu,T)$ that is neither the product measure nor an off-diagonal joining. The construction is an application of \cite[Proposition~3.1]{MR4269425}; to prepare for its application in \cref{subsec:apply_CE} we give in \cref{subsec:two_toweres} a technical description of towers for $(X,\mu,T)$.

\subsection{A tower}
\label{subsec:two_toweres}

In the picture at scale $a(k)$ we have a tower
\[
\mathcal{T} = \bigcup_{n=0}^{(d(k) + 1)\P(k)-1} (S|Y_{a(k)})^n [0^{a(k)}] \subset Y_{a(k)}
\]
of height $(d(k)+1) \P(k)$ for the transformation $S|Y_{a(k)}$.
We will wish often to think of $\mathcal{T} \cap X$ as a tower for $T$ on which $T^n$ and $S|Y_{a(j)}^n$ agree for all $-2 \P(k) \le n \le 2 \P(k)$ for most points $x$ in $\mathcal{T} \cap X$.
In this subsection we produce subsets $J(k)$ of $[0^{a(k)}]$ that are the bases of towers for $T$ with such properties.
We will do so by removing ``columns'' from the tower.

\begin{definition}
By a \define{column} at scale $i$ we mean a subset of $\Omega$ defined by fixing values for all coordinates $i \le n \le I$. We call $I \ge i$ the \define{rank} of the column. The \define{base} of a column is its intersection with $[0^{i-1}]$.
\end{definition}

For example $\{ x \in \Omega : x(4) = 2, x(5) = 3 \}$ is a column at scale 4 and rank 5 while
\[
\{ x \in \Omega : 5 \le x(a(j)) \le 10 \}
\]
is a union of six columns at scale $a(j)$ and rank $a(j)$.

\begin{proposition}
\label{prop:joining_prep}
For every $k \in \N$ there are sets $J(k), U(k) \subset X$ and a constant $c>0$ with the following properties.
\begin{enumerate}[label={\textbf{\textup{PJ\arabic*}}.},ref={\textbf{PJ\arabic*}},leftmargin=5em]
\item
\label{jt:return}
$\min \{ n \in \N : T^n(x) \in J(k) \} \ge (d(k)+1)(2\P(k) - \Q(k))-3$ for every $x \in J(k)$
\item
\label{jt:shrinking}
$(d(k)+1) \P(k) \sum\limits_{j > k} \mu(J(j)) \to 0$
\end{enumerate}
Writing
\[
A(k) = \bigg(\bigcup_{n=0}^{(d(k)+1)\P(k)-1} T^n J(k)\bigg)
\bigg\backslash U(k)
\qquad \textup{and} \qquad
B(k) = \bigg(X \setminus A(k)\bigg) \setminus U(k)
\]
for every $k \in \N$ we have all the following properties.
\begin{enumerate}[label={\textbf{\textup{PJ\arabic*}}.},ref={\textbf{PJ\arabic*}},leftmargin=5em]
\setcounter{enumi}{2}
\item
\label{jt:left}
$T^n(x) = (S|Y_{a(k)})^n(x)$ for every $x \in A(k)$ and every $-2 \P(k) \le n \le 2 \P(k).$
\item \label{conc:same left} For every $x \in A(k)$ we have $(S|Y_{a(k)})^{\P(k)}(x) \stackrel{a(k)-1}{=}x$.
\item
\label{jt:right}
$T^n(x) = (S|Y_{a(k)})^n(x)$ for every $x \in B(k)$ and every $-2\P(k) + 2\Q(k) \le n \le 2\P(k) - 2\Q(k).$
\item \label{conc:same right} For every $x \in B(k)$ we have $(S|Y_{a(k)})^{\P(k)-\Q(k)}(x)  \stackrel{a(k)-1}{=}x$.
\item
\label{jt:discard}
$\mu(U(k)) \le \dfrac {15} {2^k}$
\item
\label{jt:proportion}
$\mu(A(k)) \ge c$ and $\mu(B(k)) \ge c$
\end{enumerate}
\end{proposition}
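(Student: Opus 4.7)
The plan is to let $J(k)$ be essentially the base $[0^{a(k)}]\cap X$ of the scale-$a(k)$ tower (cut slightly to avoid higher-scale removals), and to let $U(k)$ collect a few small-measure exceptional sets; then $A(k)$ will be (essentially) the left half of the scale-$a(k)$ tower shown in Figure~\ref{fig:unbounded_removal} and $B(k)$ the right half.

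Concretely I would set
\[
J(k) \;=\; \bigl([0^{a(k)}]\cap X\bigr)\setminus \bad\bigl(a(k),(d(k)+1)(2\P(k)-\Q(k))\bigr)
\]
so that on $J(k)$ the iterates of $T$ coincide with those of $S|Y_{a(k)}$ throughout one full tower period in either direction. Since the first $(S|Y_{a(k)})$-return time of $[0^{a(k)}]$ to itself is exactly $(d(k)+1)(2\P(k)-\Q(k))$, property \ref{jt:return} follows; the $-3$ slack absorbs the handful of bounded-scale $W_j$ with $j>a(k)$ (each of the form $[7^{j-1}8]$ sitting inside $[7^{a(k)}]$) that the $T$-orbit can skip during the single visit to $[7^{a(k)}]$ per period. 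Property \ref{jt:shrinking} reduces to $\mu(J(j))\le 1/(\nu(X)t(a(j)))$ together with the extreme growth of $t(a(j))$: since $t(a(j))/t(a(j-1))\ge 10^{a(j)-a(j-1)-1}$, the tail $\sum_{j>k}\mu(J(j))$ is dominated by a geometric series of ratio much smaller than $((d(k)+1)\P(k))^{-1}$, using \ref{p:tower_height} to keep $\P(k)$ comparable to $t(a(k)-1)$.

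Next, take $U(k)=U_1(k)\cup U_2(k)\cup U_3(k)$, where $U_1(k)$ is the top $\Q(k)$ levels of the column $x(a(k))=d(k)$ (the only part of the left half where $(S|Y_{a(k)})^{\P(k)}$ spills past the top of column $d(k)+1$, violating \ref{conc:same left}); $U_2(k)$ is the set of points whose $(S|Y_{a(k)})$-orbit meets some $W_{a(m)}$ with $m>k$ within $\pm 2\P(k)$ iterates (so that \ref{jt:left} and \ref{jt:right} hold on the complement); and $U_3(k)$ is a small boundary correction at the top of the rightmost column $x(a(k))=2d(k)+1$ where $(S|Y_{a(k)})^{\P(k)-\Q(k)}$ wraps out of scale $a(k)$. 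The measure of $U_1(k)$ is $\Q(k)(d(k)+1)\mu([0^{a(k)}])$, bounded by $2\P(k-1)/\P(k)\le 10^{-5}\cdot 2^{-k}$ via \ref{p:summing_gaps}; $U_2(k)$ is controlled by \cref{lem:cylinder_decay} applied to the geometric tail $\sum_{a(m)>a(k)}\P(m-1)/\P(m)$; and $U_3(k)$ is similar. Summing yields \ref{jt:discard}.

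With $A(k)$ and $B(k)$ as defined, properties \ref{jt:left} and \ref{jt:right} are immediate from the definition of $U_2(k)$, while \ref{conc:same left} and \ref{conc:same right} follow from the column heights $\P(k)$ on the left and $\P(k)-\Q(k)$ on the right: outside $U_1(k)\cup U_3(k)$, the iterate $(S|Y_{a(k)})^{\P(k)}$ (respectively $(S|Y_{a(k)})^{\P(k)-\Q(k)}$) carries each point to the next column on the same half at the same intra-column height, preserving the first $a(k)-1$ coordinates. Property \ref{jt:proportion} holds with a uniform $c$ because the two halves contribute respectively $\P(k)/(2\P(k)-\Q(k))\ge 1/2$ and $(\P(k)-\Q(k))/(2\P(k)-\Q(k))$ of the scale-$a(k)$ mass, both bounded away from zero by \ref{p:summing_gaps}, while $\mu(U(k))=O(1/2^k)$. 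The main obstacle will be the careful measure bookkeeping for $U(k)$, particularly tracking the interplay between the $W_{a(m)}$ removals inherited from higher scales and the boundary behavior at the tops of columns $d(k)$ and $2d(k)+1$, which is what forces the $-3$ slack in \ref{jt:return}.
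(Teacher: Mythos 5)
Your overall strategy matches the paper's: take $J(k) = [0^{a(k)}]$ cut down by a $\bad$-set so that $T$ and $S|Y_{a(k)}$ agree over a full period, and let $U(k)$ collect small exceptional sets so that $A(k)$ and $B(k)$ are essentially the left and right halves of the scale-$a(k)$ tower. The definitions of $J(k)$ and the arguments for \ref{jt:return}, \ref{jt:shrinking}, \ref{conc:same left}, \ref{conc:same right}, \ref{jt:discard}, \ref{jt:proportion} are all in the same spirit as the paper's and look fine.

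The gap is in \ref{jt:left} (and by symmetry \ref{jt:right}), where you claim the conclusion is ``immediate from the definition of $U_2(k)$.'' It is not. Having $T^n(x)=(S|Y_{a(k)})^n(x)$ for $|n|\le 2\P(k)$ requires that the $(S|Y_{a(k)})$-orbit of $x$ avoid \emph{every} removed set $W_\ell$ with $\ell > a(k)$ on that range, not just the unbounded-scale ones $W_{a(m)}$. Your $U_2(k)$ excludes only the latter. The bounded-scale removals $W_\ell = [7^{\ell-1}8]$ for bounded $\ell > a(k)$ all sit inside $[7^{a(k)}]$, i.e.\ in the column $x(a(k))=7$, which lies well inside the left half of the scale-$a(k)$ tower since $d(k)=2^{k+5}>7$. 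So $A(k)$ as you define it contains points in columns $5,\dots,9$ whose $(S|Y_{a(k)})$-orbit passes through $[7^{a(k)}]$ within $\pm2\P(k)$ iterates, and a positive-measure subset of those (e.g.\ points with $x(a(k)+1)=8$) land in some $W_\ell$ there, forcing $T^n(x)=(S|Y_{a(k)})^{n+1}(x)\neq (S|Y_{a(k)})^n(x)$. You in fact notice exactly these bounded-scale removals when arguing for the ``$-3$'' slack in \ref{jt:return}, but then drop them when defining $U(k)$. The paper handles this by including the columns $x(a(k))\in\{5,6,7,8,9\}$ in its $U_0(k)$ (along with a few columns near $0$, $d(k)$, and $2d(k)+1$), so that for every $x$ in the complement the iterates $(T^nx)(a(k))$ stay away from $7$. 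Adding that column exclusion to your $U(k)$ — still of measure $\mathrm{O}(1/d(k))=\mathrm{O}(2^{-k})$ — repairs the argument and brings your proof in line with the paper's.
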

\begin{proof}
For each $k \in \N$ define
\[
V(k) = \bad \bigg(a(k),(d(k)+1)(2\P(k)-\Q(k))-1\bigg)
\]
and put $J(k) = [0^{a(k)}] \setminus V(k)$.

Note that $J(k) \subset X$ for all $k$.
For every $x \in J(k)$ and every $0 \le n \le (d(k)+1)(2\P(k)-\Q(k))-3$ we have
\begin{equation}
\label{eqn:tower_discrep}
T^n(x) = (S|Y_{a(k)})^n(x) \textup{ or } T^n(x) = (S|Y_{a(k)})^{n+1}(x) 
\end{equation} and
\begin{equation}
\label{eq:equal}
(S|Y_{a(k)})^j(x) \notin \bigcup_{\substack{j>a(k)\\j \notin\{a(i)\}}}W_j
\Rightarrow
T^n(x)=(S|Y_{a(k)})^n(x).
\end{equation}
Indeed, if $T^n(x) \ne (S|Y_{a(k)})^n(x)$ for some $n$ in the above range then
\[
(S|Y_{a(k)})^j(x) \in \bigcup_{i>a(k)}W_i
\]
for some $0 < j \le n$.
But as $x \notin V(k)$ this restriction becomes
\[
(S|Y_{a(k)})^j(x) \in \bigcup_{\substack{i>a(k)\\i \textup{ bounded}}} W_i
\]
and
\[
\min
\left\{
\ell>0
:
\begin{aligned}
(S|Y_{a(k)})^\ell x \in W_j \text{ for some } x \in W_i \setminus V(k)\\ \text{ with }i\neq j, \, i,j\in \{a(k)+1,a(k)+2,...\}\setminus \{a(r)\}
\end{aligned}
\right\}
\ge \p(a(k)+1)-1
\]
so for each $x \in J(k)$ 
\[
\left|
\left\{
0\leq j\leq n:(S|Y_{(a(k)})x \in \bigcup_{\ell>a(k)}W_\ell \right\} \right|
\leq 1
\text{ and }
\left| \left\{ 0\geq j\geq -n:(S|Y_{(a(k)})x \in \bigcup_{\ell>a(k)}W_\ell \right\} \right| \leq 1
\]
which establishes \ref{jt:return} because $(d(k)+1)(2\P(k)-\Q(k))$ is the first return time to $[0^{a(k)}]$ for $S|Y_{a(k)}$.

For \ref{conc:same left} and \ref{conc:same right} note that the definition of $A(k)$ gives that $x \in A(k)$ implies $0\leq x(\ell)\leq d(k)+1$ and   $y \in A(k)^c\supset B(k)$ implies $0\leq y(\ell)\leq 2d(k)+1$, consulting Figure \ref{fig:unbounded_removal} gives these claims.

For \ref{jt:shrinking} note that $\mu(J(k)) \le 1/\nu(X) t(a(k)+1)$ giving
\begin{equation}\label{eq:est JT}
(d(k)+1)\P(k) \sum_{j > k} \mu(J(j))
\le
\dfrac{1}{\nu(X)} \sum_{j > k} \dfrac{(d(k)+1)\P(k)}{t(a(j)+1)}
\le
\dfrac{1}{\nu(X)} \sum_{j > k} \dfrac{1}{10^{a(j) - a(k)}}
\end{equation}
because $(d(k)+1)\P(k) \le t(a(k))$ for all $j > k$.

To get \ref{jt:left} and \ref{jt:right} we define the sets $U(k)$ in terms of columns.
First define
\begin{equation}\label{eq:U}
U_0(k)=\bigg\{x\in \Omega:x(a(k)\notin \{0,1,5,6,7,8,9,d(k)-1,...,d(k)+2,2d(k),2d(k)+1\}\bigg\}
\end{equation}
which is a union of columns at scale $a(k)$. We have
\[
\nu(U_0(k)) = \dfrac{13 \P(k)}{t(a(k))} \le \dfrac{13}{d(k)}
\]
and therefore
\begin{equation}
\label{eq:JT est 2}
\mu(U_0(k) \cap X) \le \dfrac{13}{d(k) \nu(X)} \le \dfrac{15}{2^k}
\end{equation}
which goes to zero as $k \to \infty$.

Now, if $x \in X \setminus \bigg(U_0(k) \cup V(k)\bigg)$ 
then for all $0\leq |n|\leq \P(k)$ because $x \notin V(k)$ and $(T^nx)(a(k))\neq 7$, as in 
\eqref{eq:equal}, $(S|Y_{a(k)})^nx=T^nx$.

Setting $U(k)=U_0(k) \cup V(k)$
we have \ref{jt:left} and \ref{jt:right}.
Using \eqref{eq:JT est 2} we have 
\[
\mu(U(k))\leq \frac{14}{2^k}+\frac 1 {\nu(X)} 2(d(k)+1)(2\P(k)-\Q(k))-1)+1)\nu(\bigcup_{j>k}W_{a(j)})\leq \dfrac {15} {2^k}
\]
which is \ref{jt:discard}.

Lastly, \ref{jt:proportion} is a consequence of $\{ x \in Y_{a(k)} : x(k) \le d(k) \}$ and $\{x \in Y_{a(k)} : x(k) > d(k) \}$ both having measure at least $\tfrac{1}{4}$ and \ref{jt:discard}.
\end{proof}

\subsection{Building the joining}
\label{subsec:apply_CE}

In this section we prove \cref{thm:ergodic joining}.
We begin by recalling that the Kantorovich-Rubenstein metric on the space $\meas_1(V)$ of Borel probability measures on a compact metric space $V$ is defined by
\[
\metric_\KR(\eta,\xi) = \sup \left\{ | \eta(f) - \xi(f) | : f \textup{ is 1-Lipschitz on } X \textup{ with respect to } \metric|_{X\times X} \right\}
\]
for all $\eta, \xi$ in $\meas_1(V)$.

The joining $\lambda$ will be built using a minor modification of \cite[Proposition~3.1]{MR4269425} (in the case $d=2$). We highlight the differences in bold and in the appendix we prove the version of \cite[Corollary 3.3]{MR4269425}, which is the only part of the proof that requires significant modification. Note that \cref{ce:kr}  is a strengthening of \cite[Proposition 3.1 (B)]{MR4269425}, the corresponding condition in \cite{MR4269425}, and this is used for \eqref{eq:CEchanged} in the appendix.

\begin{proposition}
\label{prop:ce}
Fix sequences
\begin{itemize}
\item
$k \mapsto J(k)$ of \textbf{subsets of} $X$
\item
$k \mapsto U(k)$ of measurable subsets of $X$
\item
$k \mapsto r(k)$ of natural numbers
\item
$k \mapsto \alpha(k)$ and $k \mapsto \gamma(k)$ of \textbf{integers}
\item
$k \mapsto \epsilon(k)$ of positive real numbers
\end{itemize}
and set
\[
A(k) = \bigcup_{i=0}^{r(k)-1} T^i (J(k)) \bigg\backslash U(k)
\qquad
B(k) = (X \setminus A(k)) \setminus U(k)
\]
and let $\nu_i$ on $X \times X$ be the push-forward of $\mu$ under the map $x \mapsto (x, T^i(x))$. Assume all of the following.
\begin{enumerate}[label={\textbf{\textup{J\arabic*}}.},ref={\textbf{J\arabic*}},leftmargin=5em]
\item
\label{ce:proportion}
There is $c > 0$ such that for all $k \in \N$ one has $\mu(A(k)) \ge c$ and $\mu(B(k)) \ge c$.
\item
\label{ce:return}
For all $k \in \N $ the minimal return time of $T$ on $J(k)$ is at least $\tfrac{3}{2} r(k)$.
\item
\label{ce:discard}
For all $k \in \N$ we have $\mu(U(k)) < \epsilon(k)$.
\item
\label{ce:shrinking}
$\lim\limits_{k \to \infty} r(k) \sum\limits_{j > k} \mu(J(j)) = 0$
\item
\label{ce:error}
The sequence $k \mapsto \epsilon(k)$ is non-increasing and summable.
\item
\label{ce:nearness}
For every $k \in \N$ and every $x \in A(k)$ one has
\begin{equation}
\label{ce:nearness_1}
\metric(T^{\alpha(k)} x, T^{\gamma(k-1)} x) < \epsilon(k)
\qquad \text{and} \qquad 
\metric(T^{\gamma(k)} x, T^{\alpha(k-1)} x) < \epsilon(k)
\end{equation}
and for every $k \in \N$ and every $x \in B(k)$ one has
\begin{equation}
\label{ce:nearness_2}
\metric(T^{\alpha(k)} x, T^{\alpha(k-1)} x) < \epsilon(k)
\qquad\text{and}\qquad
\metric(T^{\gamma(k)} x, T^{\gamma(k-1)} x) < \epsilon(k)
\end{equation}
\item
\label{ce:kr}
For every $k \in \N$ the estimates
\begin{align*}
\metric_\KR \left( \nu_{\alpha(k)}, \dfrac{1}{L} \sum_{i=1}^L (T \times T)^i \delta(x,T^{\alpha(k)} x) \right) &{} < \epsilon(k) \\
\metric_\KR \left( \nu_{\gamma(k)}, \dfrac{1}{L} \sum_{i=1}^L (T \times T)^i \delta(x,T^{\gamma(k)} x) \right) &{} < \epsilon(k)
\end{align*}
both hold for all $x \in X$ and all $L \ge \tfrac{1}{9} \epsilon(k)r(k+1)$.
For every $k \in \N$ the estimates
\begin{align}
\label{eq:ce changed}\mu \left( \, \bigcup_{9L \ge r(k+1)} \left\{ x \in X : \metric_\KR \left( \nu_{\alpha(k)}, \dfrac{1}{L} \sum_{i=1}^L (T \times T)^i \delta(x,T^{\alpha(k)} x) \right) > \epsilon(k) \right\} \right) &{} < \epsilon(k) \\
\mu \left( \, \bigcup_{9L \ge r(k+1)} \left\{ x \in X : \metric_\KR \left( \nu_{\gamma(k)}, \dfrac{1}{L} \sum_{i=1}^L (T \times T)^i \delta(x,T^{\gamma(k)} x) \right) > \epsilon(k) \right\} \right) &{} < \epsilon(k)
\end{align}
both hold.
\end{enumerate} 
Then
\[
\lim_{j \to \infty} \nu_{\alpha(j)} = \lim_{j \to \infty} \dfrac{\nu_{\alpha(j)} + \nu_{\gamma(j)}}{2} = \lim_{j \to \infty} \nu_{\gamma(j)}
\]
and in particular the above limits exist. Moreover the common limiting value $\lambda$ is ergodic for $T \times T$ and there is $C > 0$ such that
\[
\metric_\KR \left( \lambda, \dfrac{\nu_{\alpha(k)} + \nu_{\gamma(k)}}{2} \right) \le C \sum_{i > k} \epsilon(i)
\]
for all $k \in \N$.
\end{proposition}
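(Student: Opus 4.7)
The plan is to first prove that the averaged sequence $\mu_k := (\nu_{\alpha(k)} + \nu_{\gamma(k)})/2$ is Cauchy in $\metric_\KR$, and then to upgrade this to convergence of $\nu_{\alpha(k)}$ and $\nu_{\gamma(k)}$ individually by invoking the ergodic-type bound \ref{ce:kr}. The driving mechanism is the swap-or-stay structure encoded in \ref{ce:nearness}: on $A(k)$ the pair $(T^{\alpha(k)}, T^{\gamma(k)})$ lies within $\epsilon(k)$ of $(T^{\gamma(k-1)}, T^{\alpha(k-1)})$, while on $B(k)$ it lies within $\epsilon(k)$ of $(T^{\alpha(k-1)}, T^{\gamma(k-1)})$. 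The symmetry of this swap and stay will leave the sum $\nu_{\alpha(k)} + \nu_{\gamma(k)}$ essentially preserved when passing from level $k-1$ to level $k$.

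To make this precise, for any $1$-Lipschitz $f \colon X \times X \to [0,1]$ I would split each of $\nu_{\alpha(k)}(f)$ and $\nu_{\gamma(k)}(f)$ into integrals over $A(k)$, $B(k)$, and $U(k)$, apply \ref{ce:nearness} to replace the level-$k$ time shifts by their level-$(k-1)$ analogues, and absorb the $U(k)$ piece via \ref{ce:discard}. On $A(k)$ the contributions from $\nu_{\alpha(k)}$ and $\nu_{\gamma(k)}$ combine symmetrically with those from $\nu_{\gamma(k-1)}$ and $\nu_{\alpha(k-1)}$, while on $B(k)$ the identifications are direct, yielding
\[
\bigl|(\nu_{\alpha(k)} + \nu_{\gamma(k)})(f) - (\nu_{\alpha(k-1)} + \nu_{\gamma(k-1)})(f)\bigr| \le C_0 \epsilon(k)
\]
for some absolute constant $C_0$. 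Summability from \ref{ce:error} then furnishes the limit $\lambda = \lim_k \mu_k$ together with the claimed tail bound for a suitable $C$.

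To deduce individual convergence I would use the tower condition \ref{ce:return} together with \ref{ce:proportion} and \ref{ce:kr}. For $L$ of order $r(k+1)$ the orbital average $\tfrac{1}{L}\sum_{i=1}^L (T \times T)^i \delta_{(x, T^{\alpha(k)}x)}$ drags the base coordinate through essentially one full tower of height $r(k+1)$; by \ref{ce:proportion} this passage equidistributes the basepoint between $A(k+1)$ and $B(k+1)$ up to a fixed proportion, and applying the swap-or-stay at level $k+1$ transforms the average into something within $O(\epsilon(k))$ of $\mu_{k+1}$. The second half of \ref{ce:kr} then identifies this orbital average with $\nu_{\alpha(k)}$ itself on a set of $\mu$-measure at least $1 - O(\epsilon(k))$, so $\metric_\KR(\nu_{\alpha(k)}, \mu_{k+1}) \to 0$ and consequently $\nu_{\alpha(k)} \to \lambda$; the argument for $\nu_{\gamma(k)}$ is symmetric. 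Ergodicity of $\lambda$ for $T \times T$ then follows from \ref{ce:kr}: each $\nu_{\alpha(k)}$ is concentrated on a single $T \times T$-orbit and is hence itself ergodic, and the uniform Cesaro approximation from \ref{ce:kr} passes to the KR-limit.

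The principal obstacle, and precisely what forces the strengthening of \ref{ce:kr} relative to \cite[Proposition~3.1]{MR4269425}, is the simultaneous control of three error sources when the swap-or-stay identity is iterated: the portion of the tower from which the $r(k)$-orbit escapes $A(k)$, the discarded piece $U(k)$, and the $\mu$-bad set appearing in the second inequality of \ref{ce:kr}. These are balanced against one another using \ref{ce:shrinking} and the summability in \ref{ce:error}, and the bookkeeping is what the modified version of \cite[Corollary~3.3]{MR4269425} proved in the appendix is designed to carry out.
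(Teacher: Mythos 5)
Your first paragraph correctly identifies and exploits the swap-or-stay symmetry to show that the averaged sequence $(\nu_{\alpha(k)}+\nu_{\gamma(k)})/2$ is Cauchy: splitting each integral over $A(k)$, $B(k)$, $U(k)$, applying \ref{ce:nearness}, and absorbing the $U(k)$-piece via \ref{ce:discard} does give $|(\nu_{\alpha(k)}+\nu_{\gamma(k)})(f) - (\nu_{\alpha(k-1)}+\nu_{\gamma(k-1)})(f)| \le C_0 \epsilon(k)$, and the tail bound follows from \ref{ce:error}. This matches one ingredient of the intended argument.

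The second paragraph has a genuine gap. Cauchy-ness of the \emph{sum} alone does not give convergence of $\nu_{\alpha(k)}$ and $\nu_{\gamma(k)}$ individually, and the orbital-average mechanism you propose does not close it. If one pushes a long orbit through the swap-or-stay at level $k+1$, what one obtains is that $\nu_{\alpha(k)}$ is close to the \emph{restricted mixture} $\nu_{\gamma(k+1)}|_{A(k+1)} + \nu_{\alpha(k+1)}|_{B(k+1)}$, not to $\mu_{k+1}=(\nu_{\alpha(k+1)}+\nu_{\gamma(k+1)})/2$; and \ref{ce:proportion} supplies only a lower bound on $\mu(A(k+1))$ and $\mu(B(k+1))$, not equidistribution or equal weights. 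The missing step, and the heart of the paper's argument (via the modification of Lemma~3.2 and Corollary~3.3 of \cite{MR4269425} carried out in the appendix), is to show that the \emph{restricted} measure satisfies
\[
\metric_\KR\!\left(\tfrac{1}{\mu(A(k+1))}\,\nu_{\gamma(k)}|_{A(k+1)},\; \nu_{\gamma(k)}\right) \le D\bigl(\epsilon(k)+\epsilon(k+1)\bigr),
\]
which is what \ref{ce:kr}, \ref{ce:return}, and \ref{ce:shrinking} are jointly for: one replaces the restriction to $A(k+1)$ by a sliding window over the tower of height $r(k+1)$, uses \ref{ce:return} so the levels $T^i J(k+1)$ are genuinely disjoint, uses \ref{ce:shrinking} to control re-entries into later towers, and uses \ref{ce:kr} to replace the window-average by $\nu_{\gamma(k)}$ itself. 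This yields the affine recursion $\nu_{\alpha(k+1)} \approx \mu(A(k+1))\,\nu_{\gamma(k)} + \mu(B(k+1))\,\nu_{\alpha(k)}$ (and its mirror for $\gamma$), whence $\nu_{\alpha(k+1)}-\nu_{\gamma(k+1)} \approx (\mu(B(k+1))-\mu(A(k+1)))\bigl(\nu_{\alpha(k)}-\nu_{\gamma(k)}\bigr)$; the contraction factor $|\mu(B(k+1))-\mu(A(k+1))|$ is bounded away from $1$ by \ref{ce:proportion}, and iterating pulls the two sequences together exponentially. Your proof proposal never produces this factorization of the restriction, so the individual-convergence step is not established. (Your last sentence on ergodicity is also informal: $\nu_{\alpha(k)}$ is not concentrated on a single orbit; it is an off-diagonal joining and hence ergodic, and ergodicity of the limit does use \ref{ce:kr}, but the mechanism is the Cesàro approximation to $\lambda$ rather than orbit concentration.)
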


In order to apply \cref{prop:ce} to establish \cref{thm:ergodic joining} the central assumption to establish is \ref{ce:nearness}.
The times $\alpha(k)$ and $\gamma(k)$ will be chosen based on the picture at the unbounded scale $a(k)$.
The set $A(k)$ will consist of points in the left tower at scale $a(k)$ while the set $B(k)$ will consist of points in the right tower at scale $a(k)$.
Indeed, if $x \in A(x)$, then by \eqref{eq:U} we have $x(a(k))\in [2,d(k)-2]$ and so $T^n(x)$ is in the left tower at scale $a(k)$ for all $0\leq |n|\leq \P(k)$. Analogously for $x \in B(k)$ and the right tower.
Recall that $\P(k)$ is the height of the left tower and $\P(k) - \Q(k)$ is the height of the right tower at scale $a(k)$.
Taking
\[
\alpha(0) = 1
\qquad
\alpha(1) = \P(1)
\qquad
\alpha(2) = \P(2) - \P(1) + 1
\qquad
\alpha(3) = \P(3) - \P(2) + \P(1)
\]
and
\[
\gamma(0) = 0
\qquad
\gamma(1) = - \P(1) + 1
\qquad
\gamma(2) = - \P(2) + \P(1)
\qquad
\gamma(3) = -\P(3) + \P(2) - \P(1) + 1
\]
and using the inductive definition $\Q(n+1) = 2\P(n) - \Q(n)$ we have for example that $T^{b(3)} \approx T^{d(2)}$ and $T^{d(3)} \approx T^{b(2)}$ for many points in $A(3)$ and $T^{b(3)} \approx T^{b(2)}$ and $T^{d(3)} \approx T^{d(2)}$ for many points in $B(3)$.

\begin{proof}[Proof of \cref{thm:ergodic joining}]
We will apply \cref{prop:ce}.
Let the constant $c > 0$ and the sequences $k \mapsto J(k)$ and $k \mapsto U(k)$ be those from \cref{prop:joining_prep}.
Define $r(k) = (d(k)+1) \P(k) -1$  and put $\epsilon(k) = 15 \cdot 2^{-k}$ 
With these choices the sets $A(k)$ and $B(k)$ in the hypothesis of \cref{prop:ce} are the same as the sets $A(k)$ and $B(k)$ in the statement of \cref{prop:joining_prep}.
In particular we get \ref{ce:proportion} from \ref{jt:proportion} and \ref{ce:return} from
\[
3(d(k)+1) \P(k) \le 4(d(k)+1) (\P(k) - \Q(k))
\]
and \ref{jt:return}.
Our choice of $\epsilon(k)$ gives \ref{ce:discard} immediately and \ref{ce:shrinking} is just \ref{jt:shrinking}.
We get \ref{ce:error} from \ref{jt:discard}.

To establish \ref{ce:nearness} and \ref{ce:kr} we need to define the sequences $\alpha(k)$ and $\gamma(k)$.
Put $\alpha(0) = 1$ and $\gamma(0) = 0$ and inductively define
\[
\alpha(k+1) = \P(k+1) + \gamma(k)
\qquad\text{and}\qquad
\gamma(k+1) = - \P(k+1) + \alpha(k)
\]
for all $k \in \N$.
One can check by induction that
\begin{equation}
\label{eqn:joining_tower_relations}
\alpha(k) + \Q(k) = \alpha(k-1) + \P(k)
\textup{ and } 
\gamma(k) + \P(k) = \Q(k) + \gamma(k-1)
\end{equation}
for all $k \in \N$.

We verify that \ref{ce:nearness} holds.

We calculate
\[
T^{\alpha(k)}(x) = T^{\P(k) + \gamma(k-1)}(x) \stackrel{a(k)-1}{=} T^{\gamma(k-1)}(x)
\Rightarrow
\metric(T^{\alpha(k)}(x), T^{\gamma(k-1)}(x)) \le \epsilon(k)
\]
and
\[
T^{\gamma(k)}(x) = T^{-\P(k) + \alpha(k-1)}(x) \stackrel{a(k)-1}{=} T^{\alpha(k-1)}(x)
\Rightarrow
\metric(T^{\gamma(k)}(x), T^{\alpha(k-1)}(x)) \le \epsilon(k)
\]
using \ref{jt:left} and \ref{conc:same left}, which gives \eqref{ce:nearness_1}.
Similarly, for all $x \in B(k)$ we have
\[
T^{\alpha(k)}(x) \stackrel{(k)-1}{=} T^{\alpha(k) - \P(k) + \Q(k)}(x) = T^{\alpha(k-1)}(x)
\Rightarrow
\metric( T^{\alpha(k)}(x), T^{\alpha(k-1)}(x)) < \epsilon(k)
\]
and
\[
T^{\gamma(k)}(x) \stackrel{a(k)-1}{=} T^{\gamma(k) + \P(k) - \Q(k)}(x) = T^{\gamma(k-1)}(x)
\Rightarrow
\metric( T^{\gamma(k)}(x), T^{\gamma(k-1)}(x)) < \epsilon(k)
\]
using \ref{jt:right} and \ref{conc:same right}.
This establishes \eqref{ce:nearness_2} and thus \ref{ce:nearness}.

Lastly \ref{ce:kr} follows from ergodicity of $(X,\mu,T)$ after passing to a subsequence of $k \in \N$.
\end{proof}

\section{Mild mixing}
\label{sec:mild mixing}

In this section we will prove \cref{thm:mild mixing}.
Given a non-constant function $f$ in $\ltwo(X,\mu)$ we wish to prove that no sequence $\textbf{\textit{r}}$ in $\Z$ with $|r(n)| \to \infty$ satisfies $\| f - T^{r(n)} f \| \to 0$.

In \cref{subsec:buddies} we define the essential notion of ``buddies'' and give in \cref{thm:getting_constant weak} a criterion for a non-constant $f \in \ltwo(X,\mu)$ not to have $\textbf{\textit{r}}$ as a rigidity sequence.
The thrust of the criterion is that the existence of a uniformly positive proportion of $(r(\tau),\tau)$ buddies as $\tau \to \infty$ precludes $f$ from being rigid along $\textbf{\textit{r}}$.

Thus, given $\textbf{\textit{r}}$ we need to be able to produce buddies. There are certain situations where producing buddies is relatively straightforward. Those are the situations where we do not need to carry out the reduction procedure mentioned in the introduction. In those situations are able to instead produce ``friendly pairs'' which are defined in \cref{subsec:buddies from pairs}.

In fact, we would prefer to work with friendly pairs throughout and avoid altogether the notion of buddies.
However, when using the reduction procedure (see \cref{subsec:expansion_reduction}) to prove \cref{prop:reduction} we are unable to work directly with friendly pairs.

We give in \cref{subsec:reduction summary} a technical summary of the reduction procedure. In \cref{subsec:reduction once} we given some preparatory results describing what happens when we apply the reduction procedure once.
The main technical result is proved in \cref{subsec:reduction main}.
Finally, the proof of \cref{thm:mild mixing} is given in \cref{subsec:mild mixing}.

\subsection{Buddies}
\label{subsec:buddies}

Essential to our proof is the following definition.

\begin{definition}[Buddies]
\label{def:weak_friends}
Fix $r,j \in \N$.
We will say that $(y,z)$ in $X \times X$ are \define{$(r,j)$ buddies} if the following properties all hold.
\begin{enumerate}
\item
\label{cond:start same 2}
 $y \stackrel{j-1}{=} z$

\item
\label{cond:differ by 1}
There exists $0 < |i| <9$ so that
 $(S|Y_{j-1}^i \circ T^r)(y) \stackrel{j-1}{=} (T^r)(z)$ 

\end{enumerate}
\end{definition}

\begin{remark}
In our argument, we will have $|i|=1$. However, in related arguments, $|i|$ will often be in a larger range (see e.g. \cite{MR4251942}). As such we phrase things more generally to have Theorem \ref{thm:getting_constant weak} in a more general situation.
\end{remark}

The points $y$ and $z$ are $(r,j)$ buddies if they are initially close (in that they agree in the first $j-1$ coordinates) and through $r$ iterations of $T$  differ by at least one and at most eight iterations of $S|Y_{j-1}$.

\begin{example}
Fix $k \in \N$ a bounded scale.
When $y \in X \cap [6 7^{k-2} 7]$ with $S(y) = T(y)$ and $z \in X \cap [6 7^{k-2}8]$ with $S^2(y) = T(y)$ the points $y$ and $z$ are $(1,k)$ buddies.
\end{example}

We record here a lemma about buddies that will be  needed later.

\begin{lemma}
\label{lem:buddies through reduc}
Assume $r,u\in \mathbb{N}$, $j<J\in \mathbb{N}$ and $y,z\in X$ so that
\begin{enumerate}
\item $T^r(y) \stackrel{J-1}{=} T^u(y)$
\item $T^r(z) \stackrel{J-1}{=} T^u(z)$
\item $(y,z)$ are $(r,j)$-buddies,
\end{enumerate}
then $(y,z)$ are $(u,j)$-buddies.
\end{lemma}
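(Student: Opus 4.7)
Write $a = T^r(y)$, $a' = T^u(y)$, $b = T^r(z)$, $b' = T^u(z)$. The first condition $y \stackrel{j-1}{=} z$ of \cref{def:weak_friends} for $(y,z)$ being $(u,j)$-buddies is identical to the first condition of $(y,z)$ being $(r,j)$-buddies, so it is automatic. The second condition asks for some $0 < |i'| < 9$ with $(S|Y_{j-1})^{i'}(a') \stackrel{j-1}{=} b'$; the plan is to take $i'$ equal to the integer $i$ supplied by the $(r,j)$-buddy hypothesis and to show that the same $i$ still works at time $u$.

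The engine will be a locality property: whenever $x, x' \in Y_{j-1}$ satisfy $x \stackrel{j-1}{=} x'$, one has $(S|Y_{j-1})^m(x) \stackrel{j-1}{=} (S|Y_{j-1})^m(x')$ for every $m \in \Z$. To prove this, I would first observe that the first $j-1$ coordinates of $S(x)$ are a function only of the first $j-1$ coordinates of $x$: by direct inspection of the definition of $S$, either some coordinate of $x$ among positions $1,\ldots,j-1$ is strictly below its cap, in which case $S$ changes only those coordinates via the usual odometer rule, or all of the first $j-1$ coordinates of $x$ are at their cap, in which case the first $j-1$ coordinates of $S(x)$ are identically zero. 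Since $W_1, \ldots, W_{j-1}$ are also cylinders in the first $j-1$ coordinates only, membership in $Y_{j-1}$ depends only on those coordinates; hence the first return time to $Y_{j-1}$ under $S$ is determined by the first $j-1$ coordinates of the starting point. Iterating gives the locality claim for all $m \in \Z$.

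Granted the locality property, the lemma will follow in one chain. Hypotheses (1) and (2) give $a \stackrel{J-1}{=} a'$ and $b \stackrel{J-1}{=} b'$, which by $j < J$ restrict to $a \stackrel{j-1}{=} a'$ and $b \stackrel{j-1}{=} b'$. Picking $0 < |i| < 9$ from the $(r,j)$-buddy hypothesis and chaining,
\[
(S|Y_{j-1})^i(a') \stackrel{j-1}{=} (S|Y_{j-1})^i(a) \stackrel{j-1}{=} b \stackrel{j-1}{=} b',
\]
which is exactly the second buddy condition at time $u$ with $i' = i$. The only real obstacle is the locality property itself, which is a mechanical unpacking of the definition of $S$; once that is in hand, neither the bound $|i| < 9$ nor the strict inequality $j < J$ is used in an essential way beyond ensuring that agreement in the first $J-1$ coordinates implies agreement in the first $j-1$ coordinates.
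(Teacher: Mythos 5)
Your proposal is correct and takes essentially the same approach as the paper's proof: observe that the first buddy condition does not involve the time, and that the second condition depends only on the first $j-1$ coordinates of the iterates $T^r(y)$ and $T^r(z)$, because $S|Y_{j-1}$ acts locally on those coordinates; hypotheses (1) and (2) then transfer the condition from time $r$ to time $u$. Your explicit verification of the locality property (that the first $j-1$ coordinates of $(S|Y_{j-1})^m(x)$ are a function of the first $j-1$ coordinates of $x$, since both the odometer carry and membership in $Y_{j-1}$ are determined there) is exactly the content the paper compresses into the phrase ``$S|Y_{j-1}$ depends only on these coordinates.''
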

\begin{proof}
The first condition of being $(a,j)$ buddies is independent of $a$ and so we just need to check the second condition. The second condition of being $(a,j)$-buddies depends only on the first $j-1$ coordinates of $T^a$. Indeed, $S|Y_{j-1}$ depends only on these coordinates. So by condition 1.\ and 2.\ above, $(y,z)$ being $(r,j)$-buddies and being $(u,j)$-buddies is equivalent.
\end{proof}

The following theorem is our main criterion for non-rigidity of $f \in \ltwo(X,\mu)$ along a sequence $\textbf{r}$ in $\Z$.

\begin{theorem}
\label{thm:getting_constant weak}
Let $f:X \to \mathbb{C}$ be a non-constant $\ltwo(X,\mu)$ function and let $\textbf{r}$ be a sequence in $\Z$ with $|r(n)| \to \infty$.
Assume there exists $\rho>0$ so that for all $\tau \in \mathbb{N}$ there exists 
$ i_0 \in \mathbb{N}$ so that for all $i\geq i_0$ there exists 
a Borel set $\Gamma_i \subset X$ and a measure preserving function $\phi_i : X \to X$ with
\begin{itemize}
\item
$\mu(\Gamma_i) > \rho$
\item
for all $z \in \Gamma_i$ we have that $(\phi_i(z),z)$  are $(r(i),\tau)$-buddies
\end{itemize}
all hold.
Then $\textbf{r}$ is not a rigidity sequence for $f$.
\end{theorem}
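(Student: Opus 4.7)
I argue by contradiction: suppose $f \in \ltwo(X,\mu)$ is non-constant and that $\textbf{r}$ is a rigidity sequence for $f$, so $\|T^{r(n)} f - f\|_{\ltwo} \to 0$. The plan is to use the buddy hypothesis to produce a non-trivial approximate invariance of $f$, and then contradict non-constancy using the weak mixing of $(X,\mu,T)$.

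First I would approximate $f$ by a cylinder function. For any $\epsilon > 0$, by the martingale convergence theorem I may choose $\tau$ so large that $g := \mathbb{E}(f \mid \mathcal{F}_{\tau-1})$---the conditional expectation of $f$ onto the $\sigma$-algebra $\mathcal{F}_{\tau-1}$ generated by the first $\tau-1$ coordinates of $\Omega$ restricted to $X$---satisfies $\|f-g\|_{\ltwo} < \epsilon$, with $g$ still having variance bounded below by $\tfrac{1}{2}\mathrm{Var}(f) > 0$. I may also assume $f \in \lp^\infty$ after truncation.

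Next I apply the hypothesis at this scale: for $i$ large there are $\Gamma_i$ with $\mu(\Gamma_i) > \rho$ and a measure-preserving $\phi_i : X \to X$ so that for every $z \in \Gamma_i$ the pair $(\phi_i(z), z)$ is an $(r(i),\tau)$-buddy pair, witnessed by some shift $k = k(z) \in \{\pm 1, \dots, \pm 8\}$. A pigeonhole argument (first inside each $\Gamma_i$ to pick a subset $\Gamma_i' \subset \Gamma_i$ of measure $\ge \rho/16$ on which $k(z)$ is constant, then across $i$ using finiteness of the shift set) produces a subsequence of $i$'s on which a single $k_0 \in \{\pm 1, \dots, \pm 8\}$ works. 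Writing $V := S|Y_{\tau-1}$, the $\mathcal{F}_{\tau-1}$-measurability of $g$ together with the two buddy conditions gives $g(\phi_i(z)) = g(z)$ and $g(V^{k_0} T^{r(i)} \phi_i(z)) = g(T^{r(i)} z)$ for $z \in \Gamma_i'$. Setting $\psi_i := T^{r(i)} \circ \phi_i$, which is measure-preserving, and rearranging yields
\[
(V^{k_0} g - g)\bigl(\psi_i(z)\bigr) = g\bigl(T^{r(i)} z\bigr) - g\bigl(T^{r(i)} \phi_i(z)\bigr)
\]
on $\Gamma_i'$. The right-hand side has $\ltwo(\mu)$-norm at most $O(\epsilon) + o_i(1)$: rigidity of $f$ combined with $\|f - g\|_{\ltwo} < \epsilon$ gives $\|g \circ T^{r(i)} - g\|_{\ltwo} \le 2\epsilon + o_i(1)$, and the same bound holds with $\phi_i(z)$ in place of $z$ because $\phi_i$ is measure-preserving, while the term $g(\phi_i(z)) - g(z)$ vanishes on $\Gamma_i'$. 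Taking a weak-$\ast$ accumulation point of the subprobability measures $(\psi_i)_\ast(\mu|_{\Gamma_i'}) \le \mu$, each of mass $\ge \rho/16$, produces a measure $\eta_\tau \le \mu$ with $\eta_\tau(X) \ge \rho/16$ and $\int |V^{k_0} g - g|^2 \intd \eta_\tau = O(\epsilon^2)$.

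The final step is to close the argument, and this is the main obstacle. The idea is that $V^{k_0}$ coincides with $T^{k_0}$ on the set where the first $|k_0|$ many $V$-returns to $Y_{\tau-1}$ all lie in $X$; by \cref{lem:removed_mass} the complement of this set has $\mu$-measure $o_\tau(1)$. Using $f \in \lp^\infty$ to control boundary terms, the approximate $V^{k_0}$-invariance transfers to $\int |T^{k_0} g - g|^2 \intd \eta_\tau = O(\epsilon^2) + o_\tau(1)$. Letting $\epsilon \to 0$ and $\tau \to \infty$ along a subsequence on which $k_0$ remains fixed (possible by another pigeonhole, since the shift set is finite), and passing to a weak-$\ast$ limit $\eta \le \mu$ with $\eta(X) \ge \rho/16$, I conclude that $T^{k_0} f = f$ holds $\eta$-almost everywhere, and in particular on a set of $\mu$-measure at least $\rho/16$. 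Upgrading this positive-measure invariance to almost-everywhere invariance is the core difficulty: the plan is to exploit the weak mixing of $(X,\mu,T)$ (so that $T^{k_0}$ is ergodic for every $k_0 \ne 0$) together with an iteration of the approximate invariance along $T$-translates of the good set, which should force $T^{k_0} f = f$ almost everywhere, and hence $f$ constant by ergodicity of $T^{k_0}$---contradicting the assumption that $f$ is non-constant.
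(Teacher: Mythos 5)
Your proposal shares the opening moves with the paper (cylinder approximation, pigeonhole on the shift $k_0\in\{\pm1,\dots,\pm8\}$, Lusin/uniform continuity considerations), but the closing step is genuinely different and is where a real gap sits. You derive that $T^{k_0}f=f$ on a set of positive measure and then write ``upgrading this positive-measure invariance to almost-everywhere invariance is the core difficulty,'' proposing to use weak mixing and an unspecified iteration. That upgrade cannot be carried out in the form stated: knowing $f(T^{k_0}x)=f(x)$ for $x$ in a positive-measure set $A$ says nothing about what happens outside $A$, and there is no mechanism that forces equality to propagate. (For instance, any $f$ that takes the same value on $A$ and on $T^{k_0}A$ satisfies the positive-measure equality without being $T^{k_0}$-invariant.) Weak mixing makes $T^{k_0}$ ergodic, but ergodicity only helps once you already have an invariant function; it does not convert partial agreement into global agreement.

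The paper avoids this pitfall entirely by never aiming for pointwise invariance. Because $T^{k_0}$ is ergodic, $\int |f - f\circ T^{k_0}|\,\mathrm{d}\mu = c_{k_0} > 0$, and the pointwise ergodic theorem is applied not to $f$ but to the orbit averages of $|f - f\circ T^{k_0}|$: on most of the space, $\frac{1}{B+1}\sum_{j=0}^{B}|f(T^{j}y)-f(T^{j+k_0}y)|$ is close to $c_{k_0}$. The buddy hypothesis is then used to replace the shifted term $f(T^{j+k_0}y)$ by $f(T^j \phi_i(y))$ (up to $\varepsilon$), and a triangle inequality separates
\[
|f(T^{n_i+j}z) - f(T^{n_i+j}\phi_i z)| \le |f(T^{n_i+j}z)-f(T^j z)| + |f(T^{n_i+j}\phi_i z)-f(T^j\phi_i z)| + |f(T^j z)-f(T^j\phi_i z)|,
\]
with the last summand small since $z$ and $\phi_i(z)$ agree on the first $\tau-1$ coordinates and both lie in a Lusin set. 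Integrating over the good set $D$ of measure $\gtrsim\rho$ and summing in $j$ shows $\int|f - f\circ T^{n_i}|\,\mathrm{d}\mu$ is bounded below by a constant depending only on $\rho$ and $c_{k_0}$, directly contradicting rigidity. If you want to repair your write-up, the key change is to abandon the goal of showing $T^{k_0}f=f$ and instead run the ergodic theorem on $|f - f\circ T^\ell|$ to extract a uniform lower bound on $\int|f - f\circ T^{r(i)}|$.
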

\begin{lemma}\label{lem:same power} 
For all $\epsilon>0$ and $B\in \mathbb{N}$ there exists $\tau$ so that if $r \in \mathbb{Z}$, $\Gamma$ is a Borel set and $\phi:\Gamma \to X$ is a measure preserving function satisfying $(\phi(z),z)$ are $(r,\tau)$-buddies for then 
\[
\mu
\left(
\left\{
y\in \Gamma
:
\begin{aligned}
\exists \ell \text{ with }0<|\ell|<10 \text{ so that for all } \\ -B\leq a\leq B \text{ we have } \metric \Big( (T^{r+a} \circ \phi_i)(y), T^{r+a+\ell}(y) \Big) > \epsilon
\end{aligned}
\right\}
\right)
<\epsilon.
\]
\end{lemma}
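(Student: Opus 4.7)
The plan is to pick $\tau$ very large in terms of $B$ and $\epsilon$, extract from each buddy pair $(\phi(y),y)$ a single shift $\ell(y)\in\{\pm 1,\dots,\pm 8\}$ that works uniformly across the window $|a|\le B$, and show that the set of $y$ for which this shift fails has $\mu$-measure less than $\epsilon$. Concretely, for each $y\in\Gamma$ the buddy hypothesis supplies some $i(y)\in\{\pm 1,\dots,\pm 8\}$ with
\[
(S|Y_{\tau-1})^{i(y)}(T^r\phi(y))\stackrel{\tau-1}{=}T^r y,
\]
and I will take $\ell(y):=-i(y)$, which automatically satisfies $0<|\ell(y)|<10$.

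The first step is to carve out a small exceptional set $E\subset\Gamma$ consisting of those $y$ for which either (i) some point in the partial $S|Y_{\tau-1}$-orbit $\{(S|Y_{\tau-1})^j(T^r\phi(y)):|j|\le 8\}$ lies in $Y_{\tau-1}\setminus X=\bigcup_{m\ge\tau}W_m$, or (ii) the segment $\{T^{r+b}y,T^{r+b}\phi(y):|b|\le B+20\}$ meets a point $w$ with $(S|Y_{\tau-1})(w)\notin X$. Condition (i) ensures that $(S|Y_{\tau-1})^{i(y)}$ coincides step-for-step with $T^{i(y)}$ at $T^r\phi(y)$, so that the buddy identity rewrites as $T^{r+i(y)}\phi(y)\stackrel{\tau-1}{=}T^r y$; condition (ii) ensures that $T$ and $S|Y_{\tau-1}$ agree along the orbit segments we care about. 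Using $T$-invariance of $\mu$, $S|Y_{\tau-1}$-invariance of $\nu|_{Y_{\tau-1}}$, and \cref{lem:removed_mass}, a union bound yields
\[
\mu(E)\le C(B+1)\sum_{m\ge\tau}\frac{\nu(W_m)}{\nu(X)}
\]
for some absolute constant $C$, which is less than $\epsilon$ once $\tau$ is large.

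For $y\in\Gamma\setminus E$ the key observation is that $S$ preserves agreement on the first $\tau-1$ coordinates: two points that match on coordinates $1,\dots,\tau-1$ either both have their first non-maximal coordinate in that range (in which case $S$ acts identically on those coordinates) or both have coordinates $1,\dots,\tau-1$ all maximal (in which case $S$ resets them to zero on both). Because membership in $W_m$ for $m<\tau$ is determined by the first $\tau-1$ coordinates, the number of $S|Y_{\tau-1}$-steps needed to realise each $T$-step is also the same for the two points, so $T$ inherits the same preservation along any orbit segment avoiding $\bigcup_{m\ge\tau}W_m$. Condition (ii) supplies this avoidance for $T^n$ with $|n|\le B+8$ applied to $T^{r+i(y)}\phi(y)$ and $T^r y$, so iterating the rewritten buddy identity under $T^{a-i(y)}$ yields
\[
T^{r+a}\phi(y)\stackrel{\tau-1}{=}T^{r+a-i(y)}y\qquad\text{for every }|a|\le B,
\]
and choosing $\tau$ with $2^{-(\tau-1)}<\epsilon$ makes the metric distance less than $\epsilon$ for the single shift $\ell(y)=-i(y)$.

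The main obstacle is the careful step-counting in condition (ii): one must verify that the number of $S$-iterates needed to pass from $T^{r+b}y$ to $T^{r+b+1}y$ matches the number needed from $T^{r+b}\phi(y)$ to $T^{r+b+1}\phi(y)$, which reduces to both points lying in exactly the same $W_m$'s for $m<\tau$---a property forced by agreement on the first $\tau-1$ coordinates together with the absence of any visit to $W_m$ with $m\ge\tau$. Once this step-counting is in hand the rest is a routine union bound using the rapid decay of $\nu(W_m)$ from \cref{lem:removed_mass}.
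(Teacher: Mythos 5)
Your proof is correct, but it follows a genuinely different route from the paper's. The paper argues softly: it observes that buddyhood makes $T^r\phi(y)$ and $(S|Y_{\tau-1})^{i(y)}T^r(y)$ metric-close (once $\tau$ is large), and then applies Lusin's theorem to find a compact set $K$ with $\mu(K)>1-\epsilon$ on which each $T^a$, $|a|\le B$, is uniformly continuous; combining the two finishes the argument. You instead replace Lusin entirely by a structural argument tailored to the induced-odometer form of $T$: you carve out an explicit exceptional set $E$ — those $y$ whose $S|Y_{\tau-1}$-orbit near $T^r\phi(y)$ or whose $T$-orbit window of length $\mathsf{O}(B)$ meets $\bigcup_{m\ge\tau}W_m$ — bound $\mu(E)$ by a union bound plus the tail of $\sum\nu(W_m)$ from Lemma~\ref{lem:removed_mass}, and on $\Gamma\setminus E$ deduce actual coordinate agreement $T^{r+a}\phi(y)\stackrel{\tau-1}{=}T^{r+a-i(y)}(y)$ for all $|a|\le B$ by propagating the buddy identity through the odometer. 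Each approach has a virtue: the paper's Lusin argument is shorter and works formally for any measurable $T^a$, whereas yours is elementary and self-contained, exploits (and makes visible) the mechanism by which closeness persists under $T^a$, and delivers the exact $\stackrel{\tau-1}{=}$ form that is actually invoked later in the proof of Theorem~\ref{thm:getting_constant weak}. One small remark: the subtle step the paper elides in ``combining'' — namely that $(S|Y_{\tau-1})^{i(y)}T^r(y)$ must actually equal some $T^{r+\ell}(y)$ before the Lusin estimate has the right shape — is precisely what your condition~(i) handles explicitly, so your version is arguably the more careful filling-in of the gap.

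One caveat unrelated to your argument's correctness: as written, the set in the lemma statement is $\{y:\exists\ell\;\forall a\;\metric>\epsilon\}$, but both your proof and the subsequent use of the lemma in Theorem~\ref{thm:getting_constant weak} concern the (different) set $\{y:\forall\ell\;\exists a\;\metric>\epsilon\}$ — i.e.\ the set where no single shift $\ell$ works uniformly. You correctly prove the latter, which is clearly the intended statement; the quantifiers in the displayed set appear to be transposed in the source.
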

\begin{proof}
   
The definition of $(y,z)$-buddies gives that for any $\delta>0$ there exists $j_0$ so that for all $j'\geq j_0$ we have that for every $(y,z)$ which are $(r,j')$-buddies we have that there exists $0<|k|<10$ so that 
\begin{equation}\label{eq:buddies close}
\min_{0<|k|<10}\metric\left( \Big(T^{r} \circ \phi_i\Big)(y) ,\Big((S|Y_{j'-1} )^k \circ T^{r}\Big)(y)\right)<\delta.
\end{equation}

Because $T^k:X \to X$ is measurable for all $k \in \mathbb{Z}$, by Lusin's theorem and the fact that continuous functions on compact sets are uniformly continuous, for any $B \in \N$ there exists $\delta>0$ and $K \subset X$, compact with $\mu(K)>1-\epsilon$, so that for all $x,y \in K$ with $d(x,y)<\delta$ and $-B\leq a\leq B $ we have 
\[
\metric(T^ax,T^ay)<\epsilon.
\]
Combining this with \eqref{eq:buddies close} gives the lemma.
\end{proof}

\begin{proof}[Proof of Theorem \ref{thm:getting_constant weak}]
Let $\rho>0$ be given and $f$ be a non-constant $\ltwo(\mu)$ function. Because $T$ is weakly mixing and thus every power of $T$ is ergodic, for all $0<|\ell|<9$ there exists $c_\ell>0$ so that
\[
\int |f-f\circ T^\ell| \intd \mu>c_\ell
\]
holds.
Let $\tilde{c}=\min\{1,c_\ell:0<|\ell|<9\}$ and 
$0<\varepsilon<\frac{\tilde{c}}9 \frac{\rho}{400}.$ By the ergodic theorem (either Birkhoff or Von Neumann), for each  $0<|\ell|<9$ there exists $B_\ell \in \N$ so that for all $B\geq B_\ell$ we have
\begin{equation}
\label{eq:erg}
\mu \left( \left\{ y : \bigg|\frac 1 {B+1} \sum_{k=0}^{B} |f(T^{k}y)-f(T^{\ell+k}y)|-\int|f-f \circ T^{\ell}| \intd\mu\bigg|>\varepsilon\right\} \right) < \varepsilon.
\end{equation}
Denote these sets $A_{\ell,B}$.
Let 
$\tilde{B}=\max\{B_\ell:0<|\ell|<9\}$. 
By Lusin's theorem, 
there exists $K:=K_{\varepsilon,\tilde{B}}\subset X$ compact so that $\mu(K)>1-\varepsilon$

and 
$f\circ T^i$ restricted to $K$ is continuous for all $-9<i<\tilde{B}+9$. Because continuous functions on compact sets are uniformly continuous, 
there exists $\tau_{\varepsilon}$ so that when $x,y \in K_{\varepsilon,B}$ and $x \stackrel{\tau_{\varepsilon}}{=} y$  then 
\begin{equation}
\label{eq:all close} 
|f(T^jx)-f(T^jy)|<\varepsilon.
\end{equation}

We now assume that the assumption of Theorem \ref{thm:getting_constant weak} holds and obtain $\tau$ as in Lemma \ref{lem:same power} for  $\epsilon=\min\{2^{-\tau_{\varepsilon}},\varepsilon\}$ and let $\tau'=\max\{\tau_{\varepsilon},\tau\}.$ We choose $i_0$ as in the statement of Theorem \ref{thm:getting_constant weak} with the $\rho$ we have been given and  $\tau=\tau'$. 

By Lemma \ref{lem:same power}
there exists $0<|\ell|<9$ (depending on $i$) and a set $G_i$ so that $\mu(G_i)\geq \frac 1 {18}\mu(\Gamma_i)-\varepsilon>\frac{1}{20} \rho$ and  
\begin{equation}
\label{eq:specific ell}
(T^{n_i+a} \circ \phi_i)(y) \stackrel{\tau}{=} (T^{n_i+a+\ell})(z)\textup{ for all } -\tilde{B}\leq a \leq \tilde{B} \textup{ and all } z \in G_i
\end{equation}
both hold.

Now let $D$ denote the set of $z$ so that 
\begin{itemize}
\item $z \in K$
\item $\phi_i(z) \in K$
\item $T^{n_i}(z) \in K$
\item $(T^{n_i} \circ \phi_i)(z)\in K$
\item $z \in G_i$
\item $T^{n_i}(z) \in A_{\ell,\tilde{B}}.$ 
\end{itemize}
Observe that 
$\mu(D)=\mu(G_i)-\mu(A_{\ell,\tilde{B}}^c)-4 \mu(K^c)>\frac 1 {40}\rho.$ 
For all $z\in D$ we have by \eqref{eq:erg}, \eqref{eq:all close} and \eqref{eq:specific ell} that

\[
\bigg| \frac{1}{\tilde{B}+1} \sum_{j=0}^{\tilde{B}} |f(T^{n_i+j}(z))-f((T^{n_i+j} \circ \phi_i)(z))| - \int |f-f \circ T^\ell| \intd\mu\bigg| < 2\varepsilon
\]
and so
\[
\frac{1}{\tilde{B}+1} \sum_{j=0}^{\tilde{B}} \Big| f(T^{n_i+j}(z))-f((T^{n_i+j} \circ \phi_i)(z)) \Big| > \tilde{c} - 2\varepsilon
\]
as well.

Because the triangle inequality gives
\begin{multline*}|f(T^{n_i+j}(z))-f((T^{n_i+j} \circ \phi_i)(z))|-
|f(T^j(z))-f((T^j \circ \phi_i)(z))|\leq \\
|f(T^{n_i+j}(z))-f(T^j(z))|+|f((T^{n_i+j} \circ \phi_i)(z))-f((T^j\circ \phi_i)(z))|
\end{multline*}
we can estimate
\begin{align*}
&
\frac{1}{\tilde{B}+1} \sum_{j=0}^{\tilde{B}}\int\limits_{D} \Big| f(T^{n_i+j}(z))-f(T^j(z)) \Big| + \Big| f((T^{n_i+j} \circ \phi_i)(z))-f((T^j \circ \phi_i)(z)) \Big| \intd\mu
\\
\ge{}
&
\mu(D) \tilde{c} - 2\varepsilon - \frac{1}{\tilde{B}+1} \sum_{j=0}^{\tilde{B}}\int\limits_{D} |f(T^j(z))-f((T^j \circ \phi_i)(z))| \intd \mu\geq \frac{1}{80} \rho \tilde{c}
\end{align*}
where the last inequality uses \eqref{eq:all close}. 
Thus 
\begin{equation}
\label{eq:not rigid}
\int\limits_X |f(T^{n_i}x)-f(x)|\geq \frac 1 {160} \rho \tilde{c}
\end{equation}
proving the theorem. 

\end{proof}

In fact the proof gives the following.

\begin{corollary}
\label{cor:end}
For any non-constant $f \in \ltwo(X,\mu)$ and any $\rho > 0$ there exists $\tau':=\tau'(\rho,f) \in \mathbb{N}$ and $\delta:=\delta(\rho,f)>0$ so that if 
\begin{itemize}
\item 
$m \in \mathbb{Z}$,
\item
$\Gamma$ is a Borel set
\item
$\phi:\Gamma \to X$ is $\mu$-measure preserving
\end{itemize} 
satisfy 
\begin{itemize}
\item
$\mu(\Gamma)\geq \rho$ and 
\item
for all $z \in \Gamma$ we have that $(z,\phi(z))$ are $(m,\tau)$-buddies
\end{itemize}
then 
\[
\int_X |f -f\circ T^m| \intd\mu>\delta
\]
holds.
\end{corollary}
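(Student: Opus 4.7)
The plan is to extract the quantitative content that is already established in the proof of \cref{thm:getting_constant weak}, but to package it as a uniform statement about a single triple $(m,\Gamma,\phi)$ rather than a whole sequence. A careful reading of that proof shows that the estimate \eqref{eq:not rigid}, namely $\int_X |f - f\circ T^{n_i}|\intd\mu \geq \tfrac{1}{160}\rho\tilde{c}$, is obtained from the existence of a single index $i$ with the buddy/measure-preserving structure; all of the constants that appear ($\tilde{c}$, $\tilde{B}$, $K$, $\tau_\varepsilon$, $\tau$) depend only on $f$, $\rho$, and auxiliary choices of $\varepsilon$, not on the specific triple. The corollary is essentially the observation that one may declare $\delta = \tfrac{1}{160}\rho\tilde{c}$ and $\tau' = \max\{\tau_\varepsilon,\tau\}$ once and for all.

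First I would fix $f$ and $\rho$ and reconstruct the ladder of constants: set $c_\ell = \int |f - f\circ T^\ell|\intd\mu$ for $0<|\ell|<9$, which is strictly positive by weak mixing of $(X,\mu,T)$ and non-constancy of $f$, so $\tilde{c} = \min\{1, c_\ell : 0<|\ell|<9\} > 0$; then pick $\varepsilon < \tfrac{\tilde{c}\rho}{9\cdot 400}$. I would then apply the mean ergodic theorem to $|f - f\circ T^\ell|$ for $0<|\ell|<9$ to obtain a single $\tilde{B} \in \N$ and the associated sets $A_{\ell,\tilde{B}}$ satisfying \eqref{eq:erg}. Next, apply Lusin's theorem to the finite family $\{f\circ T^i : -9 < i < \tilde{B}+9\}$ to produce a compact set $K \subset X$ with $\mu(K) > 1-\varepsilon$ on which each member of the family is continuous and hence uniformly continuous; this produces a scale $\tau_\varepsilon$ such that the conclusion \eqref{eq:all close} holds for any two points of $K$ agreeing in the first $\tau_\varepsilon$ coordinates. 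Finally, apply \cref{lem:same power} with the choice $\epsilon = \min\{2^{-\tau_\varepsilon},\varepsilon\}$ to obtain a scale $\tau$, and set $\tau' = \max\{\tau_\varepsilon,\tau\}$.

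Now suppose we are given any $(m,\Gamma,\phi)$ as in the hypothesis with buddy scale $\tau'$. Because $\tau' \geq \tau$, \cref{lem:same power} produces some $0<|\ell|<9$ (depending on the triple) and a subset $G \subset \Gamma$ with $\mu(G) \geq \tfrac{1}{18}\mu(\Gamma) - \varepsilon \geq \tfrac{1}{20}\rho$ on which $(T^{m+a}\circ \phi)(z) \stackrel{\tau_\varepsilon}{=} T^{m+a+\ell}(z)$ for all $-\tilde{B} \leq a \leq \tilde{B}$. Form the set
\[
D = G \cap K \cap \phi^{-1}(K) \cap T^{-m}(K) \cap (T^m\circ\phi)^{-1}(K) \cap T^{-m}(A_{\ell,\tilde{B}}),
\]
which, using that $\phi$ is measure preserving and each removed set has measure less than $\varepsilon$, satisfies $\mu(D) \geq \tfrac{1}{40}\rho$.

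For $z \in D$, the same triangle-inequality chain used in the proof of \cref{thm:getting_constant weak} then gives
\[
\tfrac{1}{\tilde{B}+1} \sum_{j=0}^{\tilde{B}} \bigl| f(T^{m+j}z) - f((T^{m+j}\circ\phi)(z)) \bigr| > \tilde{c} - 2\varepsilon,
\]
from which one deduces by splitting via $f(T^j z)$ and $f((T^j\circ\phi)(z))$ and using \eqref{eq:all close} on $D$ that $\int_X |f - f\circ T^m|\intd\mu \geq \tfrac{1}{160}\rho\tilde{c}$. Setting $\delta = \tfrac{1}{160}\rho\tilde{c}$ yields the conclusion. The only thing to verify carefully is that $\tau_\varepsilon$, $\tilde{B}$, $K$, and $\tau$ were all fixed before the triple $(m,\Gamma,\phi)$ was introduced; this is the essential point that converts the qualitative proof of \cref{thm:getting_constant weak} into the uniform corollary, and no new estimate is required.
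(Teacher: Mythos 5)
Your proposal is correct and matches the paper's intent exactly: the paper gives no separate argument for \cref{cor:end}, stating only ``In fact the proof gives the following'' after \cref{thm:getting_constant weak}, and your careful re-reading of that proof is precisely the intended unpacking. You correctly identify that the ladder of constants $\tilde{c}$, $\varepsilon$, $\tilde{B}$, $K$, $\tau_\varepsilon$, $\tau$ is fixed before any triple $(m,\Gamma,\phi)$ appears, that $\tau' = \max\{\tau_\varepsilon,\tau\}$ and $\delta = \tfrac{1}{160}\rho\tilde{c}$ are therefore uniform in the triple, and that the rest is a verbatim replay of the estimate on the set $D$.
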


\subsection{Buddies from friendly pairs}
\label{subsec:buddies from pairs}

We define friendly pairs and prove that friendly pairs are always buddies. We then show how to produce friendly pairs at bounded scales, 
establishing that if $f\in \ltwo$, non-constant and $\textbf{\textit{r}}$ is a sequence such that $\scl(r(n))$ is at a bounded scale for infinitely many $n$ then $\textbf{\textit{r}}$ is not an $f$-rigidity sequence (see Corollary \ref{cor:bounded mmix}).

\begin{definition} [Friendly pairs]
\label{def:friends} 
Fix $r \in \mathbb{Z}$ and $j \in \N$ with $j$ at a bounded scale. 
We will say that $(y,z)$ in $X \times X$ is an \define{$(r,j)$ friendly pair} if the following properties all hold.
\begin{enumerate}[label={\textbf{\textup{F\arabic*}}.},ref={\textbf{F\arabic*}},leftmargin=5em]
\item \label{cond:start same}
$y(n) \stackrel{j-1}{=} z(n)$ for all $1 \le n \le j-1$
\item \label{cond:T defined 1}
$T^n(y)= (S|Y_j)^n(y)$ for all $1 \le |n| \le |r|+1$ with $sgn(n)=sgn(r)$
\item \label{cond:T defined 2}
$T^n(z) = (S|Y_j)^n(z)$ for all $1 \le |n| \le |r|+1$ with $sgn(h)=sgn(r)$
\item \label{cond:friend miss}
$(S|Y_{j-1})^n(y) \not\in W_j$ for all $1 \le |n| \le |r|+1$ with $sgn(h)=sgn(r)$
\item \label{cond:friend hit}
$(S|Y_{j-1})^h(z) \in W_j$ for a unique $1 \le |h| \le |r|$ with $sgn(h)=sgn(r)$
\end{enumerate}
\end{definition}

The following lemma says that friendly pairs are always buddies.

\begin{lemma}
\label{lem:friend to weak}
If $(y,z)$ is an $(r,j)$ friendly pair

 then $(y,z)$ are $(r,j)$-buddies.
\end{lemma}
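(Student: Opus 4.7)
The plan is to verify the two conditions of \cref{def:weak_friends} directly, with $i = \mathrm{sgn}(r)$ as the witness for condition~\ref{cond:differ by 1}. Condition~\ref{cond:start same 2} is immediate from \ref{cond:start same}, so the work is entirely in establishing condition~\ref{cond:differ by 1}.

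The central observation is that both the set $Y_{j-1}$ and the transformation $S|Y_{j-1}$ are determined by the first $j-1$ coordinates: $Y_{j-1}$ is the complement of the cylinders $W_1, \ldots, W_{j-1}$, each of which is defined by the first $j-1$ coordinates, and $S|Y_{j-1}$ is the induced first-return map whose return time to $Y_{j-1}$ likewise depends only on those coordinates. Consequently, if $y \stackrel{j-1}{=} z$ then $(S|Y_{j-1})^n(y) \stackrel{j-1}{=} (S|Y_{j-1})^n(z)$ for every $n \in \Z$. This will be the mechanism that transports the initial agreement of $y$ and $z$ to agreement of their orbits.

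I would then assume $r > 0$ (the case $r < 0$ is identical after flipping signs) and compute $T^r(y)$ and $T^r(z)$ in terms of $S|Y_{j-1}$. By \ref{cond:T defined 1} we have $T^r(y) = (S|Y_j)^r(y)$, and by \ref{cond:friend miss} none of the first $r+1$ iterates of $y$ under $S|Y_{j-1}$ land in $W_j$, so no ``skip'' is incurred when passing to the induced map on $Y_j$; thus $T^r(y) = (S|Y_{j-1})^r(y)$. For $z$, by \ref{cond:T defined 2} we have $T^r(z) = (S|Y_j)^r(z)$, and by \ref{cond:friend hit} the orbit of $z$ under $S|Y_{j-1}$ hits $W_j$ exactly once at some step $h$ with $1 \le h \le r$; since $S|Y_j$ is the first return to $Y_j = Y_{j-1}\setminus W_j$, this forces a single one-step skip and hence $T^r(z) = (S|Y_{j-1})^{r+1}(z)$.

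Combining these identities with the coordinate-preservation observation above yields
\[
(S|Y_{j-1})^{1}\bigl(T^r(y)\bigr) = (S|Y_{j-1})^{r+1}(y) \stackrel{j-1}{=} (S|Y_{j-1})^{r+1}(z) = T^r(z),
\]
so $i = 1$ satisfies $0 < |i| < 9$ and witnesses condition~\ref{cond:differ by 1}. The $r < 0$ case uses $i = -1$ symmetrically. The only subtle point is checking that the single skip really accounts for exactly one ``extra'' application of $S|Y_{j-1}$ -- this is where the uniqueness clause in \ref{cond:friend hit} together with the range $|n|\le |r|+1$ in \ref{cond:friend miss}/\ref{cond:T defined 1}/\ref{cond:T defined 2} matters, since it guarantees the skip lands back inside $Y_j$ and no further skips occur before time $r$.
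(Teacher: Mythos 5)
Your proof is correct and takes essentially the same route as the paper's: use the hit/miss conditions to express $T^r(y)=(S|Y_{j-1})^r(y)$ and $T^r(z)=(S|Y_{j-1})^{r\pm 1}(z)$, then transport the initial agreement in the first $j-1$ coordinates through iterates of $S|Y_{j-1}$, which depends only on those coordinates. You are simply more explicit than the paper about the sign of $r$ and about why agreement propagates under $S|Y_{j-1}$, but the argument is the same.
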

\begin{proof}
By conditions \ref{cond:T defined 2} and \ref{cond:friend hit} we have that $T^r(z)=(S|Y_{j-1})^{r+1}(z)$ and by conditions \ref{cond:T defined 1} and \ref{cond:friend miss} we have that $T^r(y)=(S|Y_{j-1})^r(y)$ and $T^{r+1}(y)=(S|Y_{j-1})^{r+1}(y)$. By condition \ref{cond:start same} we have that the first $j-1$ entries of $(S|Y_{j-1})^k(y)$ are the same as the first $j-1$ entries of $(S|Y_{j-1})^k(z)$ for all $k$ and in particular for $k \in \{r,r+1\}$ proving the second condition in the definition of buddies.
This proves the lemma.
\end{proof}

There are some situations where it is relatively straightforward to produce friendly pairs. The first, formalized by the next result, is when $r \in \Z$ is at a bounded scale. The second, given by \cref{prop:small friend}, is when $r$ is small for its scale and $\scl(r) - 1$ is a bounded scale. The second applies in particular when $r$ is at an unbounded scale, but small at that scale. The salient point is that neither of these situations require us to reduce to smaller scales.

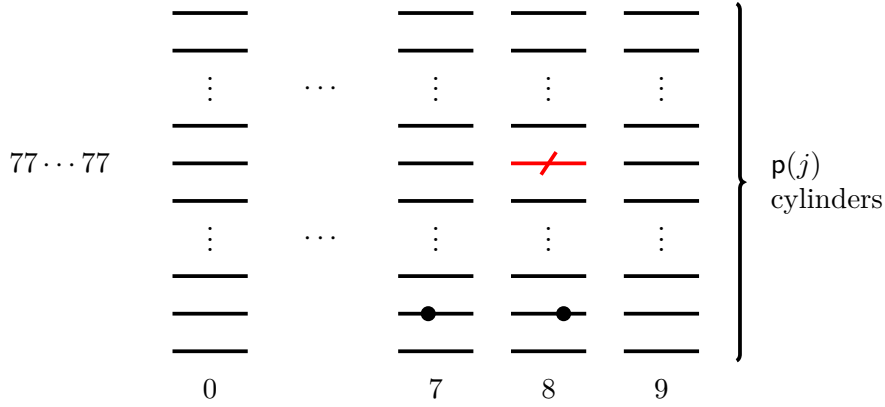
\begin{figure}[t]
\centering
\begin{tikzpicture}[yscale=0.5, line width=0.5mm]

\begin{scope}[shift={(-1.5,0)}]
\node at (0,5) {$77 \cdots 77$};
\end{scope}

\begin{scope}
\draw (0,9) -- (1,9);
\draw (0,8) -- (1,8);
\node at (0.5,7.2) {$\vdots$};
\draw (0,6) -- (1,6);
\draw (0,5) -- (1,5);
\draw (0,4) -- (1,4);
\node at (0.5,3.2) {$\vdots$};
\draw (0,2) -- (1,2);
\draw (0,1) -- (1,1);
\draw (0,0) -- (1,0);
\node at (0.5,-1) {$0$};
\end{scope}

\begin{scope}[shift={(1.5,0)}]
\node at (0.5,7) {$\cdots$};
\node at (0.5,3) {$\cdots$};
\end{scope}

\begin{scope}[shift={(3,0)}]
\draw (0,9) -- (1,9);
\draw (0,8) -- (1,8);
\node at (0.5,7.2) {$\vdots$};
\draw (0,6) -- (1,6);
\draw (0,5) -- (1,5);
\draw (0,4) -- (1,4);
\node at (0.5,3.2) {$\vdots$};
\draw (0,2) -- (1,2);
\draw (0,1) -- (1,1);
\fill (0.4,1) ellipse (0.1 and 0.2);
\draw (0,0) -- (1,0);
\node at (0.5,-1) {$7$};
\end{scope}

\begin{scope}[shift={(4.5,0)}]
\draw (0,9) -- (1,9);
\draw (0,8) -- (1,8);
\node at (0.5,7.2) {$\vdots$};
\draw (0,6) -- (1,6);
\draw[color=red] (0,5) -- (1,5);
\draw[color=red] (0.4,4.7) -- (0.6,5.3);
\draw (0,4) -- (1,4);
\node at (0.5,3.2) {$\vdots$};
\draw (0,2) -- (1,2);
\draw (0,1) -- (1,1);
\fill (0.7,1) ellipse (0.1 and 0.2);
\draw (0,0) -- (1,0);
\node at (0.5,-1) {$8$};
\end{scope}

\begin{scope}[shift={(6,0)}]
\draw (0,9) -- (1,9);
\draw (0,8) -- (1,8);
\node at (0.5,7.2) {$\vdots$};
\draw (0,6) -- (1,6);
\draw (0,5) -- (1,5);
\draw (0,4) -- (1,4);
\node at (0.5,3.2) {$\vdots$};
\draw (0,2) -- (1,2);
\draw (0,1) -- (1,1);
\draw (0,0) -- (1,0);
\node at (0.5,-1) {$9$};
\end{scope}

\begin{scope}[shift={(7.5,0)}]
\draw [decorate, decoration = {brace}] (0,9.25) --  (0,-0.25);
\node[align=left] at (1.2,4.5) {$\p(j)$\\cylinders};
\end{scope}

\end{tikzpicture}

\caption{The picture of $Y_{j-1}$ at the bounded scale $j$. The marked interval is $W_j$. The marked points form a $(\p(j), j)$ friendly pair if both points are outside $\bad(j,\p(j))$.}
\label{fig:acquainted_pair}

\end{figure}

\begin{theorem}
\label{thm:bounded wfriends}
For all $r \in \Z$ with $\scl(r)$ bounded there is a set $B \subset X$ with the following properties.

\begin{itemize}
\item
$\mu(B) \ge 10^{-5}$
\item
There is $\Phi : B \setminus \bad(\scl(r),8r) \to X$ a measure-preserving map that is a bijection onto its image such that $(\Phi(z), z)$ is are $(r,\scl(r))$-friends for all $z \in B \setminus \bad(\scl(r),8r)$.
\end{itemize}
\end{theorem}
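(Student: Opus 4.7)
Without loss of generality assume $r>0$; the case $r<0$ is symmetric via $T^{-1}$. Set $j=\scl(r)$. Since $j$ is bounded, $W_j=[7^{j-1}8]$ is a single cylinder of length $j$. From \cref{cor:obvioius_thing} and $(c(j)+1)\p(j)\le t(j)$ (\eqref{eqn:pn_bound}), $|r|<t(j)$; since $S|Y_{j-1}$ cyclically permutes the length-$j$ cylinders in $Y_{j-1}$ with period at least $(7/8)t(j)>r+1$ (using $\sum_{i<j}\nu(W_i)\le 1/8$ from \cref{lem:removed_mass}), the forward $S|Y_{j-1}$-orbit of any point visits $W_j$ at most once in $r+1$ iterations. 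This is the driving geometric fact.

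I take $B$ to be the set of $z\in X$ whose forward $S|Y_{j-1}$-orbit meets $W_j$ at a (necessarily unique) time $h(z)\in[1,r]$, so that $B=\bigsqcup_{h=1}^r(S|Y_{j-1})^{-h}(W_j)\cap X$. Using $\sum_h\nu((S|Y_{j-1})^{-h}(W_j))=r/t(j)$, together with \cref{lem:cylinder_size} and \cref{lem:indep} to argue that most of each preimage lies in $X$, yields $\mu(B)\ge c$ for an absolute constant $c>0$, comfortably exceeding $10^{-5}$. For $z\in B\setminus\bad(\scl(r),8r)$ I set $\delta(z)=\lceil h(z)/\p(j)\rceil\in\{1,\ldots,c(j)\}$ and define $\Phi(z)=y$ by $y(i)=z(i)$ for $i\ne j$ and $y(j)=z(j)+\delta(z)\bmod(c(j)+1)$.

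Conditions \ref{cond:start same} and \ref{cond:friend hit} are immediate from the construction. For \ref{cond:friend miss}, shifting the $j$-th coordinate of $z$ by $\delta$ shifts the corresponding point's position in the scale-$j$ tower by $\delta\p(j)$, so $y$'s first forward $W_j$-visit occurs at $h(y)\equiv h(z)-\delta\p(j)\pmod{t(j)}$; the choice of $\delta$ places $h(z)-\delta\p(j)$ in $(-\p(j),0]$, whose positive lift modulo $t(j)$ is at least $t(j)-\p(j)\ge 9\p(j)>r+1$, so $y$'s orbit avoids $W_j$ on $[1,r+1]$. Conditions \ref{cond:T defined 1} and \ref{cond:T defined 2} follow from $z\notin\bad(j,8r)$, which eliminates visits by the $S|Y_j$-orbits of $z$ (and therefore of $y$, which agrees with $z$ outside the $j$-th coordinate) to the unbounded $W_{a(m)}$ with $a(m)>j$ for all $|n|\le r+1$. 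Measure-preservation and injectivity of $\Phi$ follow by partitioning $B\setminus\bad(j,8r)$ into the level sets of $\delta$; on each piece $\Phi$ is the coordinate-$j$ shift $c\mapsto c+\delta\bmod(c(j)+1)$, a measure-preserving bijection because the marginal of $\nu$ on the $j$-th coordinate is uniform.

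The main technical obstacle I anticipate is the exceptional clause immediately after the definition of $\bad$: $z\notin\bad(j,8r)$ does not alone guarantee $T^n(z)=(S|Y_j)^n(z)$ when the $S|Y_j$-orbit enters a \emph{bounded} $W_k$ with $k>j$. I would handle this by further trimming $B$ to exclude $z$ whose forward $S|Y_j$-orbit on $[1,r+1]$ enters any such $W_k$; the union bound $(r+1)\sum_{k>j,\,\text{bounded}}\mu(W_k)\le(r+1)/(9t(j))\le 2/9$ shows the lost measure is bounded by a constant independent of $j$ and $r$, and an approximate-independence argument in the spirit of \cref{lem:indep} shows the remaining set still has $\mu$-measure at least $10^{-5}$. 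Carrying out this bookkeeping -- and verifying the analogous orbit-avoidance for $y$ follows automatically from that for $z$ since they share all coordinates except the $j$-th -- is where the care is required.
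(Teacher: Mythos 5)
Your strategy is the same in spirit as the paper's -- find a substantial set of points whose $S|Y_{\scl(r)-1}$-orbit meets $W_{\scl(r)}$ in the relevant window, and shift the $\scl(r)$-coordinate to manufacture a partner that misses it -- but your more flexible construction opens a genuine gap that the paper's rigid version closes by design.

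The gap is in your closing paragraph, where you assert that the avoidance of the bounded $W_k$ with $k>j$ by the $S|Y_j$-orbit of $y=\Phi(z)$ "follows automatically from that for $z$ since they share all coordinates except the $j$-th." That is exactly backwards: the bounded $W_k=[7^{k-1}8]$ with $k>j$ \emph{constrain the $j$-th coordinate} (they require $x(j)=7$), and $y$ and $z$ differ precisely at coordinate $j$. Because $\delta(z)$ is a variable shift in $\{1,\dots,c(j)\}$, nothing prevents $z(j)\ne 7$ but $y(j)=7$; for instance $z(j)=3$, $\delta(z)=4$, $z(j+1)=8$ gives $y(j)=7$, $y(j+1)=8$, and then the forward $S|Y_j$-orbit of $y$ lands in $W_{j+1}=[7^{j}8]$ as soon as the first $j-1$ coordinates cycle to $(7,\dots,7)$ -- well within $r+1$ steps -- even though the orbit of $z$ may avoid every bounded $W_k$. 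So \ref{cond:T defined 1} can fail for the pair $(\Phi(z),z)$ after your trimming. A related omission: you never verify $\Phi(z)\in X$, and because you take $y(j)=z(j)+\delta \bmod (c(j)+1)$, the wrapped point need not lie on the $T$-orbit of $z$, so that route to $\Phi(z)\in X$ is not available either.

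The paper avoids both issues by choosing a fixed shift landing on a digit that cannot be $7$. In the exemplary case $r<0$ it takes $B\subset\{z:z(\scl(r))=8\}$ and sets $\Phi(z)(\scl(r))=6$: since $6\ne 7$ and $6\ne 8$, the point $\Phi(z)$ is automatically outside $W_{\scl(r)}$ and outside every bounded $W_k$ with $k>\scl(r)$, and under the relevant number of $S|Y_{\scl(r)-1}$-iterates the $\scl(r)$-coordinate of $\Phi(z)$ never reaches $7$ or $8$ (using $|r|\le\p(\scl(r)+1)/2$). For $z$ itself the paper additionally imposes $z(\scl(r)+1)\in[\tfrac{c(\scl(r)+1)+1}{5},\tfrac{c(\scl(r)+1)+1}{2}]$; since that coordinate moves by at most one unit over $|r|$ iterates of $S|Y_{\scl(r)}$, the orbit of $z$ (and of $\Phi(z)$, which shares this coordinate) never sees $\{7,8\}$ in position $\scl(r)+1$, hence never enters any bounded $W_k$ with $k>\scl(r)$. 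These two fixed-digit constraints, which have no counterpart in your proposal, are precisely what make the partner $\Phi(z)$ well-behaved without any case analysis on $\delta$ and what allow the clean observation that $\Phi(z)=T^n(z)\in X$ for some small negative $n$.
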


\begin{proof}
Fix $r \in \Z$ with $\scl(r)$ bounded.
We prove the case $r < 0$.
The case $r > 0$ is similar and therefore omitted.
Note that
\begin{equation}
\label{eqn:wee bound}
\p(\scl(r)+1)/2\ge|r| \ge \p(\scl(r)) / 2
\end{equation}
holds from the definition of the expansion. Define
\begin{gather}
\label{eqn:Bset}
B = \Big\{ x \in X : x(\scl(r)) = 8 , x(\scl(r)+1)\in [\tfrac{(c(\scl(r)+1)+1)}{5},\tfrac{(c(\scl(r)+1)+1)}{2}]  \\
\textup{ and } (S|Y_{\scl(r)-1})^i(x) \in W_{\scl(r)} \textup{ for some } -|r| < i \le -1 \Big\}
\end{gather}
and note that $\nu(B) \ge 10^{-3}$ by independence so $\mu(B) \ge 10^{-3}$ by definition of $\mu$.

Given $y \in B \setminus \bad(\scl(r),8r)$ define $\Phi(y)$ by $(\Phi(y))(\scl(r)) = 6$ and $(\Phi(y))(n) = y(n)$ otherwise.
Fix $z \in B \setminus \bad(\scl(r),8r)$.
The pair $(\Phi(z),z)$ satisfies \ref{cond:start same}.
Observe that $(S|Y_{\scl(r)-1})^h(z) \in W_{\scl(r)}$ for exactly one $r< h\leq-1$ because of \eqref{eqn:wee bound} giving \ref{cond:friend hit}.
We also have \ref{cond:T defined 2} because 
\[
(S|Y_{\scl(r)})^h(z) \notin \bigcup_{i>\scl(r)} W_i
\]
for all $-r \le h \le -1$ follows upon combining $z \notin \bad(\scl(r),8r)$ with the condition on $z(\scl(r) + 1)$.
As $(\Phi(z))(\scl(r)) = 6$ we have $(S|Y_{\scl(r)-1})^h(\Phi(z)) \notin W_{\scl(r)}$ for all $-r-1 \le h \le -1$.
That gives \ref{cond:friend miss}.
Next note that $\Phi(z) = T^n(z)$ for some $4r<n<0$ which implies $\Phi(z) \in X$.
As $z \notin \bad(\scl(r),8r)$, we have
\[
(S|Y_{\scl(r)-1})^h(\Phi(z)) \notin  \bigcup_{a(k)>\scl(r)}W_{a(k)}
\]
which, together with the fact that $(\Phi(z))(\scl(r)) = 6$ implies
\[
(S|Y_{\scl(r)-1})^h(\Phi(z)) \notin \bigcup_{j>\scl(r)} W_j
\]
for all $r \le h \le 0$, gives \ref{cond:T defined 1}.
Thus $(\Phi(z),z)$ is an $(r,\scl(r))$ friendly pair.
\end{proof}

\begin{corollary}
\label{cor:bounded mmix}
If $\textbf{r}$ is a sequence in $\Z$ with $|r(i)| \to \infty$ and $r(i)$ at a bounded scale infinitely often then $\textbf{r}$ is not a rigidity sequence for any non-constant $\lp^2(X,\mu)$ function. 
\end{corollary}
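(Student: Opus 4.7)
My plan is to argue by contradiction, using the construction of \cref{thm:bounded wfriends} to build buddies at the (growing) scale $\scl(r(i))$ and then feeding them into \cref{cor:end}. First I suppose that some non-constant $f\in\ltwo(X,\mu)$ is rigid along $\textbf{r}$. By passing to a subsequence I may assume every $r(i)$ is at a bounded scale. Since \cref{cor:obvioius_thing} gives $|r|\leq\tfrac{3}{4}\p(\scl(r)+1)$, any fixed scale accommodates only finitely many integers, so $|r(i)|\to\infty$ forces $\scl(r(i))\to\infty$.

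Next I fix $\rho=10^{-5}$ and let $\tau'=\tau'(\rho,f)\in\N$ together with $\delta=\delta(\rho,f)>0$ be supplied by \cref{cor:end}. For each $i$ large enough that $\scl(r(i))\geq\tau'$, \cref{thm:bounded wfriends} produces a Borel set $B_i\subset X$ with $\mu(B_i)\geq10^{-5}$ and a measure-preserving bijection $\Phi_i\colon B_i\setminus\bad(\scl(r(i)),8r(i))\to X$ such that each $(\Phi_i(z),z)$ is an $(r(i),\scl(r(i)))$-friendly pair. By \cref{lem:friend to weak}, every such friendly pair is an $(r(i),\scl(r(i)))$-buddy pair, so setting $\Gamma_i=B_i\setminus\bad(\scl(r(i)),8r(i))$ and $\phi_i=\Phi_i$ one obtains the required buddies on a set of measure at least $\rho$ (after discarding the negligible $\bad$-set, which vanishes in measure by the bounds used in the proof of \cref{thm:bounded wfriends}).

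Finally I feed $m=r(i)$, $\Gamma_i$, and $\phi_i$ into \cref{cor:end} to conclude
\[
\int_X |f-f\circ T^{r(i)}|\intd\mu > \delta
\]
for every sufficiently large $i$, which is incompatible with $\|f\circ T^{r(i)}-f\|_{\ltwo}\to 0$ and hence with rigidity of $f$ along $\textbf{r}$.

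The main delicacy I anticipate is that \cref{thm:bounded wfriends} delivers buddies at the growing scale $\scl(r(i))$, whereas \cref{cor:end} is phrased in terms of the single scale $\tau'$ determined by $f$ and $\rho$. This is not a real obstruction because the buddy scale enters the proof of \cref{thm:getting_constant weak} only through \cref{lem:same power}, whose conclusion is uniform in the buddy scale once that scale exceeds a threshold $j_0$ depending on $f$. Since $\scl(r(i))\to\infty$, this threshold is eventually surpassed, so the buddies at scale $\scl(r(i))$ play precisely the role of the hypothesised $(r(i),\tau')$-buddies in the proof of \cref{cor:end}.
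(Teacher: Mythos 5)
Your high-level strategy matches the paper's: pass to a subsequence at bounded scales, apply \cref{thm:bounded wfriends} to obtain friendly pairs, convert them to buddies via \cref{lem:friend to weak}, and feed them into \cref{cor:end}/\cref{thm:getting_constant weak}. Your discussion of why the buddy scale $\scl(r(i))$ growing past the threshold $j_0$ (rather than equalling a fixed $\tau$) is harmless is a reasonable reading of how \cref{lem:same power} is actually used. However, there is a genuine gap at the point where you set $\Gamma_i = B_i \setminus \bad(\scl(r(i)),8r(i))$ and assert that the $\bad$-set is ``negligible'' and ``vanishes in measure by the bounds used in the proof of \cref{thm:bounded wfriends}''.

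This is not correct. \cref{thm:bounded wfriends} guarantees only $\mu(B_i)\geq 10^{-5}$ and says nothing about $\mu(B_i\setminus\bad(\scl(r(i)),8r(i)))$; no bound on $\bad(\scl(r(i)),8r(i))$ appears in its proof. In fact $\bad(\scl(r),8r)$ is not small in absolute terms: since $8|r|$ is comparable to $\p(\scl(r)+1)$, a definite proportion of $\Omega$ has its $S|Y_{\scl(r)}$-orbit hit $W_{a(k)}$ (for $a(k)$ the smallest unbounded scale above $\scl(r)$) in that window, and a priori this could swallow all of $B_i$. The content of the paper's own proof of \cref{cor:bounded mmix} is precisely to bound $\mu\bigl(B_i\cap\bad(\scl(r(i)),8r(i))\bigr)<\tfrac{8}{9}\cdot 10^{-5}$, which relies on the structural features of $B$ built into its definition \eqref{eqn:Bset} (notably the restriction on the $\scl(r)+1$ coordinate) together with \cref{lem:indep} and the fast decay of $\mu(W_{a(j)})$. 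You need to supply this estimate; without it, $\mu(\Gamma_i)$ may be smaller than your chosen $\rho=10^{-5}$, and in fact even with the paper's bound the surviving measure is only about $\tfrac{1}{9}\cdot 10^{-5}$, so $\rho$ must be taken smaller than that before invoking \cref{cor:end}.
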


\begin{proof}
 Pass to a subsequence with the property that $r(i)$ is at a bounded scale for all $i \in \N$. Let $B_i$ and $\Phi_i$ result from applying \cref{thm:bounded wfriends} to each $r(i)$.
By \cref{lem:friend to weak} and \cref{thm:getting_constant weak} it suffices to show
\[
\mu\Big( B_i \cap \bad(\scl(r(i)),8r(i)) \Big) < \frac{8}{9} \cdot \frac{1}{10^5}
\]
for all large enough $i$.
Fix $i$ and write $r$ for $r(i)$ and $B$ for $B_i$.
For convenience of exposition we assume $r < 0$. Because of the fast decay of $j \mapsto \mu(W_{a(j)})$, it suffices to show that if $a(k)$ is the smallest unbounded scale strictly greater than $\scl(r)$ we have 
\[
\mu(\{x\in B:(S|Y_{\scl(r)})^i(x)\in W_{a(k)} \text{ for some }-8|r|<i<8|r|\})<\frac 3 4 \mu(B).
\]
Observe that
\[
\{x\in X:(S|Y_{\scl(r)})^i(x)\in W_{a(k)} \text{ for some }-8|r|<i<8|r|\}
\]
is contained in $\{x \in X : x(a(k))\in [0,9] \cup [d(k)-8,2d(k)+1 ] \}$.
From the definition of $B$, \eqref{eqn:Bset}, if $\scl(r)=a(k)-1$  then
\[
\{x\in B:(S|Y_{\scl(r)})^i(x)\notin W_{\scl(r)+1}=W_{a(k)} \text{ for all }-8|r|<i<8|r| \}
\]
and otherwise
\[
\nu(B \setminus \{x\in X:(S|Y_{\scl(r)})^i(x)\in W_{a(k)} \text{ for some } -8|r|<i<8|r|\})\geq \frac{d(k)}{2d(k)+2}\nu(B)
\]
and
analogously to the proof of Lemma \ref{lem:indep} we have our estimate.
\end{proof}

We conclude this section with a second situation in which friends can be produced directly: when the time $r$ is relatively small for its scale and $\scl(r) - 1$ is a bounded scale.

\begin{proposition}
\label{prop:small friend}
Fix $r \in \Z$.
If $\scl(r) - 1$ is bounded and $|r|\leq \frac{3}{4} \p(\scl(r))$ then there exists a Borel set $B$ with $\mu(B)>10^{-5}$ and $\Phi:B \setminus \bad(\scl(r),2r) \to X$ measure preserving bijection onto its image, so that for all $x \in B$, $(x,\Phi(x))$ are $(r,\scl(r)-1)$ friends.
\end{proposition}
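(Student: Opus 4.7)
The plan is to adapt the argument of \cref{thm:bounded wfriends} one scale lower, setting $j := \scl(r) - 1$, which is bounded by hypothesis. I will treat $r > 0$; the case $r < 0$ is symmetric. The key quantitative input comes from the greedy definition of $\scl(r)$ together with \cref{lem:expansion_defined}: since $j$ is bounded, the recursion gives $\p(j+1) = 10\p(j) - 1$ whether $j+1$ is bounded or unbounded, and the greedy expansion gives $\tfrac12 \p(j+1) < |r| \leq \tfrac34 \p(j+1)$. In particular $|r|+1 < \p(j+1) \leq 10\p(j)$, so a forward $S|Y_{j-1}$-orbit meets the single cylinder $W_j = [7^{j-1}8]$ at most once over $|r|+1$ steps, while travelling forward from the column $\{z : z(j) = 9\}$ to the column $\{z : z(j) = 8\}$ requires at least $9\p(j) > |r|+1$ steps.

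I would define $B$ to consist of those $x \in X$ satisfying (i) $x(j) = 9$; (ii) $x(j+1)$ lies in a middle range $I \subset \{0,\ldots,c(j+1)\}$ large enough that the $S|Y_{j-1}$-orbit for $|n|\leq|r|+1$ stays in one safe portion of the scale-$(j+1)$ tower, taking $I \subset \{0,\ldots,d(n)\}$ in the subcase $j+1 = a(n)$ unbounded so as to remain in the left tower away from $W_{a(n)}$; and (iii) the odometer visit index of $(x(1),\ldots,x(j-1))$ is strictly less than that of $(7,\ldots,7)$. These conditions are essentially independent in $\nu$ and each has density bounded below, giving $\mu(B) > 10^{-5}$ with room to spare. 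The map $\Phi$ is defined by $\Phi(x)(j) = 8$ with $\Phi(x)(n) = x(n)$ for $n \neq j$; it is a measure-preserving bijection from $B$ onto its image, and $\Phi(x) \in X$ for $x \in B \setminus \bad(\scl(r), 2r)$ because (iii) prevents $\Phi(x) \in W_j$, the value $\Phi(x)(j) = 8$ prevents $\Phi(x) \in W_k = [7^{k-1}8]$ for every other bounded $k$, (ii) prevents $\Phi(x) \in W_{\scl(r)}$ when $\scl(r)$ is unbounded, and $\bad$-removal controls the remaining higher unbounded $W_{a(m)}$ along the orbit.

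I would then verify the five conditions of \cref{def:friends} for the pair $(x, \Phi(x))$ at scale $j$. F1 is immediate since $x$ and $\Phi(x)$ agree off coordinate $j$. F2 and F3 follow from $x \notin \bad(\scl(r),2r)$ together with (ii) to match $T$-orbits with $S|Y_j$-orbits on $|n|\leq|r|+1$ (the $S$-orbits of $x$ and $\Phi(x)$ differ by a shift of order $\p(j)$, which is absorbed by the width $2|r|$ in the definition of $\bad$ since $|r| > \p(j)$). F4 is the column-gap observation from the first paragraph: $x$ begins in column $9$ at coordinate $j$, so its forward orbit cannot reach column $8$ in fewer than $9\p(j) > |r|+1$ steps, hence avoids $W_j$. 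F5 is the single-hit observation: $\Phi(x)$ sits in column $8$ strictly below $W_j$ by (iii), so the forward $S|Y_{j-1}$-orbit meets $W_j$ at the unique step $h \in [1,\p(j))$ equal to the position gap, and this $h$ lies in $\{1,\ldots,|r|\}$ since $|r| > \p(j)$, with the next visit $\p(j+1)$ steps later, safely beyond $|r|$.

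The hard part, though still routine, will be choosing the middle range $I$ in the subcase $\scl(r) = j+1 = a(n)$ unbounded so as to guarantee $\Phi(x)$ avoids the many cylinders composing $W_{a(n)}$; this is the same kind of bookkeeping already carried out in \cref{thm:bounded wfriends} and in the construction of the $\bad$ sets, and should introduce no new difficulty.
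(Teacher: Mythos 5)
Your strategy is the same as the paper's: produce an $(r,\scl(r)-1)$-friendly pair by changing the $(\scl(r)-1)$-th coordinate of $x$ and absorbing the higher unbounded removals with $\bad(\scl(r),2r)$. The one substantive departure is the target digit. Writing $j=\scl(r)-1$, the paper sends $x(j)=9$ to $x'(j)=6$: the forward $S|Y_{j-1}$-orbit of $x'$ then enters column $8$ after roughly $2\p(j)$ steps and so hits $W_j$ within $3\p(j)<|r|$ steps \emph{regardless of the level of $x$}, and no condition like your (iii) is needed. You instead send $9\mapsto 8$, placing $\Phi(x)$ in the very column containing $W_j$; this forces the extra level constraint (iii) (strictly below $[7^{j-2}]$), without which $\Phi(x)$ could lie at or above $W_j$ and F5 would fail (the forward orbit would only return to $W_j$ after $10\p(j)>|r|$ steps). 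Your variant works for $r>0$, with (iii) doing genuine load-bearing work, but it is a bit less economical than the paper's choice, which sidesteps the level question entirely. For $r<0$ the inequality in (iii) flips as well as the target digit, so the ``symmetry'' is not a literal mirror image; the same caveat applies to the paper's own ``similar''.

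Two details worth flagging. First, you do not state a condition on $x(j+2)=x(\scl(r)+1)$; the paper imposes $x(\scl(r)+1)\notin\{6,7,8\}$ precisely because $\bad(\scl(r),2r)$ only controls the unbounded removals, while $W_{\scl(r)+1}=[7^{\scl(r)}8]$ and the larger bounded $W_k$ are controlled by this digit. Your condition (ii) on $x(j+1)=x(\scl(r))$ does not cover this, so it needs to be added to your bookkeeping, as you anticipate. Second, two small numerical slips: the return time of $W_j$ under $S|Y_{j-1}$ is $10\p(j)$, not $\p(j+1)=10\p(j)-1$ (the latter is the return time under $S|Y_j$); and the minimal forward gap from column $9$ to column $8$ is $8\p(j)+1$ rather than $9\p(j)$. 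Neither affects the conclusion since both exceed $|r|+1\leq \tfrac34\p(j+1)+1$.
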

\begin{proof}
By Theorem \ref{thm:bounded wfriends} may assume $r$ is at an unbounded scale.
Thus $\scl(r) \in \{a(\ell)\}$.
For readability, in this proof we write $b$ for $\scl(r)$.
It suffices to construct a set of $(x,x')$ that form an $(r,b-1)$-friendly pair.
We present this in the case that $r>0$.
The case $r<0$ is similar.
The pairs are formed by $x$ so that $x(b-1)=9$, 
\begin{equation}
\label{eq:index good}
x(b) < d(b)-2, \text{ and } x(b+1)\notin \{6, 7,8\}.
\end{equation} 
and $x'$ so that  
\begin{equation}
\label{eq:index same}
x'(b-1) = 6 \text{ and } x'(\ell)=x(\ell) \text{ for all }\ell \neq b-1.
\end{equation} 
Observe that $x \stackrel{b-2}{=} y$ gives \ref{cond:start same}.
Also, from \eqref{eq:index good} and \eqref{eq:index same}, $(S|Y_{b-1})^k(x') \in W_{b-1}$ for some $0<k<i$ and this is not true for $x$. This gives \ref{cond:friend hit} and \ref{cond:friend miss}. 
Further observe that $(S|Y_{b-1})^i \{x,x'\} \cap W_\ell = \emptyset$ for all $0<k<i+1$ and $\ell \geq b$ with $\ell \notin \{a(j)\}$.
From these pairs we remove any pair $x,x'$ so that at least one is in  $\bad(b,i)$ gives \ref{cond:T defined 1} and \ref{cond:T defined 2}.
Because $x'=T^n(x)$ for $-i<n<0$, we have that $x' \in \bad(b,i)$ implies $x \in \bad(b,2i)$.
Thus $(x,x')$ are $(i,b-1)$-friends whenever $x \notin \bad(b,2i)$.
Defining $\Phi$ to change the $b-1$ digit from $9$ to $6$ we obtain the proposition. 

\end{proof}

\subsection{The reduction argument}
\label{subsec:reduction summary}

We have established via \cref{cor:bounded mmix} that any non-constant $f \in \ltwo(X,\mu)$ cannot be rigid along $\textbf{\textit{r}}$ if, infinitely often, the time $r(n)$ is at a bounded scale.
It remains (after passing to a subsequence if necessary) to consider the complementary case: that $r(n)$ is at an unbounded scale for all $n \in \N$.
Suppose therefore that every term of the sequence $\textbf{\textit{r}}$ has tower expansion
\[
r(n) = \sum_{j=1}^{\scl(r(n))} e_n(j) \p(j)
\]
with $\scl(r(n))$ unbounded for all $n \in \N$. This is the most technical part of the argument.

We are unable to produce $(r(n), \scl(r(n))-1)$ friends or buddies directly when $r(n)$ is at an unbounded scale as each $W_{a(j)}$ consists of a large number of cylinders.
Our strategy in this case is to fix in advance an unbounded scale $\tau$ that is much smaller than $\scl(r(n))$ and to understand $T^{r(n)}$ at scale $\tau$ instead of the more natural scale of $\scl(r(n))$.
To begin to understand $T^{r(n)}$ at lower scales we use the following observation: in the picture at scale $\scl(r(n))$ there is a large set of points $A^{(1)}_1$ where $T^{\p(\scl(r(n)))} \approx \id$ and a large proportion of points $A^{(1)}_2$ where $T^{\p(\scl(r(n))) - \q(\scl(r(n)))} \approx \id$.
We then have $T^{r(n)} \approx T^{\lft(r(n))}$ and $T^{r(n)} \approx T^{\rht(r(n))}$ on sets $A^{(1)}_1$ and $A^{(1)}_2$ respectively where
\begin{align*}
\lft(m) &= m - \lst(m) \p(\scl(m)) \\
\rht(m) &= m - \lst(m) \p(\scl(m)) + \lst(m) \q(\scl(m))
\end{align*}
are the reduction operations that we will apply iteratively.

A consequence of the above is that we need to keep track not only of the times we obtain by applying $\lft$ and $\rht$ but also paired sets of points where the approximations hold.
As we repeatedly reduce (i.e. compose with $\lft$ and $\rht$ again and again) there is a proliferation of such pairs of times and sets.
We need to analyze how they are related, whether their associated times are at bounded or unbounded scales, and various other features.
We prepare to undertake this analysis in \cref{subsec:reduction once} by carefully characterizing (cf.\ \cref{cor:pos reduc conseq} and \cref{cor:neg reduc conseq}) which points are well-behaved under a single application of $\lft$, which are well-behaved under a single application of $\rht$, and which are neither.

The analysis of repeated reduction is the content of \cref{prop:reduction}.
It records in \ref{cond:setup} through \ref{cond:paired cyl} what happens as we reduce a time $m$ at an unbounded scale $\scl(m)$ to a smaller unbounded scale $\lambda^I(\scl(m))$ above our target unbounded scale $\tau$.
\ref{cond:setup} and \ref{cond:partition} records the data we keep track of: pairs of points and times $(A^{(i)}, n^{(i)}_j)$ that result from composing $i$ times in various ways the maps $\lft$ and $\rht$ and a repository $A^{(i)}_0$ of ``left over'' points.
\ref{cond:agree} records how times for different values of $i$ are related.

We see from \ref{cond:Azero friends} that a positive proportion of the leftover points $A^{(i)}_0$ belong to friendly pairs.
The sets $A^{(1)}_0,A^{(2)}_0,\dots,A^{(I)}_0$ are nested, and it may be that $\mu(A^{(i)}_0)$ is large enough for \ref{cond:Azero friends} to be compatible with \cref{cor:end}.

This leaves the case that $A^{(I)}_0$ is not particularly large.
When this happens we need to worry about the possibility that some scales $\scl(n^{(i)}_j)$ are smaller than $\tau$.
It is certainly possible (consider $m = \p(\scl(m))+1$ for example) that scales drop precipitously when $\lft$ or $\rht$ is applied.
However (cf.\ \cref{lem: reduction not both small}) it turns out that $\lft(m)$ and $\rht(m)$ cannot both behave badly in this sense.
To make use of this along the reduction procedure we pair up our times $n^{(i)}_1,\dots,n^{(i)}_{k(i)}$ for each $1 \le i \le I$ in such a way that at most one of them is at a scale coarser than $\tau$.
The properties we need of this pairing are given by \ref{cond:differ} and \ref{cond:paired cyl}.

All of this puts us in the following position: we have for each large enough $n$ a description of what happens as each $r(n)$ is reduced $I(n)$ times to the target unbounded scale $\tau$.
If for infinitely many $n$ the set $A^{I(n)}_0$ is large enough to apply \cref{cor:end} then $\textit{\textbf{r}}$ is not a rigidity sequence for any non-constant $f \in \ltwo(X,\mu)$.
If that does not happen, we are left with times $n^{(I(n))}_1,\dots,n^{(I(n))}_{k(I(n))}$ at least half of which are at scale $\tau$ and behave like $r(n)$ with respect to $T$.
As $\tau$ is relatively small we can scale each of these times by a relatively small amount $d$ so that half of them have expansions at the bounded scale $\tau+1$.
We will be able to conclude the whole argument if it is the case that the times
\[
d n^{(I(n))}_1,d n^{(I(n))}_2,\dots,d n^{(I(n))}_{k(I(n))}
\]
are what result if the whole procedure above is instead applied to the scaled sequence $d \textit{\textbf{r}}$.
Thus, when reducing we want also to keep track of how scaling affects everything.
What we need for that is recorded in \ref{cond:paired numbers}.

\subsection{Reducing at a single scale}
\label{subsec:reduction once}

\begin{lemma}
\label{lem:reduction positive}
Fix $r \in \N$ with $\scl(r)$ an unbounded scale $a(b)$ and fix $x \in X$.
\begin{enumerate}[label={\textbf{\textup{\ref{lem:reduction positive}.\arabic*}}.},ref={\textbf{\ref{lem:reduction positive}.\arabic*}},leftmargin=3em]
\item
\label{lem:pos red S left}
If $1 \le x(\scl(r)) \le d(b) - \lst(r)$ then $(S|Y_{\scl(r)})^r(x)$ and $(S|Y_{\scl(r)})^{\lft(r)}(x)$ agree in their first $\scl(r)-1$ coordinates.
\item
\label{lem:pos red S right}
If $d(b) + 2 \le x(\scl(r)) \le c(b) - \lst(r)$ then $(S|Y_{\scl(r)})^r(x)$ and $(S|Y_{\scl(r)})^{\rht(r)}(x)$ agree in their first $\scl(r)-1$ coordinates.
\item
\label{lem:pos red S bdd}
If $\lst(r) \ge 11$ and $c(b) - \lst(r) + 10 \le x(\scl(r)) \le c(b)$ then $((S|Y_{\scl(r)})^i (x))(\scl(r)) = 0$ for some $0 \le i < r$. If $x(\scl(r)+1) = 7$ as well there is $0 \le i \le r$ with $(S|Y_{\scl(r)})^i(x) \in W_{\scl(r)+1}$.
\end{enumerate}
\end{lemma}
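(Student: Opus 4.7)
The plan is to verify each bullet by careful analysis of the scale $\scl(r) = a(b)$ picture in $Y_{a(b)}$. The structural facts I would use are: all $c(b) + 1$ columns share the same internal level-to-coordinate correspondence at scale $a(b) - 1$; columns $0, \ldots, d(b)$ have height $\p(a(b))$ (the left tower) while columns $d(b) + 1, \ldots, c(b)$ have height $\p(a(b)) - \q(a(b))$ (the right tower, shortened by the removal of $W_{a(b)}$); and by definition of $\p(a(b))$, the iterate $(S|Y_{\scl(r)})^{\p(\scl(r))}$ moves a point up by one column within the left tower while preserving the first $\scl(r) - 1$ coordinates, and similarly $(S|Y_{\scl(r)})^{\p(\scl(r)) - \q(\scl(r))}$ does so within the right tower.

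For \ref{lem:pos red S left} I would iterate to obtain $x' = (S|Y_{\scl(r)})^{\lst(r)\p(\scl(r))}(x)$ with $x'(\scl(r)) = x(\scl(r)) + \lst(r) \le d(b)$ and $x \stackrel{\scl(r)-1}{=} x'$, the hypothesis keeping the shift inside the left tower. Writing $r = \lst(r) \p(\scl(r)) + \lft(r)$ gives $(S|Y_{\scl(r)})^r(x) = (S|Y_{\scl(r)})^{\lft(r)}(x')$, so the task reduces to comparing $(S|Y_{\scl(r)})^{\lft(r)}(x)$ and $(S|Y_{\scl(r)})^{\lft(r)}(x')$. Since $x$ and $x'$ sit at the same level in their respective left-tower columns and $|\lft(r)| \le \p(\scl(r))$ by \cref{lem:need it later}, both trajectories cross at most one column boundary, and the common level-to-coordinate correspondence shows the first $\scl(r) - 1$ coordinates agree. \ref{lem:pos red S right} is entirely analogous with the right tower replacing the left, $\p(\scl(r)) - \q(\scl(r))$ replacing $\p(\scl(r))$, and the bound $|\rht(r)| \le \p(\scl(r)) - \q(\scl(r))$ from \cref{lem:need it later} controlling the final boundary crossing.

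For \ref{lem:pos red S bdd}, writing $k = c(b) - x(\scl(r)) \in [0, \lst(r) - 10]$, the trajectory traverses the remaining $k + 1$ right-tower columns to wrap around to column $0$. This costs at most $(k+1)(\p(\scl(r)) - \q(\scl(r))) \le (\lst(r) - 9)(\p(\scl(r)) - \q(\scl(r)))$ iterations, easily less than $r \ge (\lst(r) - 1)\p(\scl(r))$ given $\lst(r) \ge 11$, proving the first assertion. When $x(\scl(r)+1) = 7$, the wrap carries to the $\scl(r)+1$-th coordinate, incrementing it from $7$ to $8$. Continuing the iteration through the bottoms of columns $1, 2, \ldots, 7$ of the left tower and then climbing within column $7$ to the level whose first $\scl(r) - 1$ coordinates read $7^{\scl(r)-1}$ brings the trajectory into $[7^{\scl(r)} 8] = W_{\scl(r)+1}$, all within $r$ iterations since the budget remaining after wrap-around is of order $\lst(r) \p(\scl(r))$.

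The main obstacle is the boundary analysis in the first two parts: when iterations push the trajectory across the boundary between the left and right towers, one must check that the common level-to-coordinate correspondence still yields agreement in the first $\scl(r) - 1$ coordinates. This relies on the fact (cf.\ \cref{lem:lot_red_bound}) that $\q(\scl(r))$ is overwhelmingly smaller than $\p(\scl(r))$, so no trajectory can traverse an entire short right-tower column within the $|\lft(r)| \le \p(\scl(r))$ or $|\rht(r)| \le \p(\scl(r)) - \q(\scl(r))$ additional iterations, limiting the boundary interaction to a single crossing whose level-matching can be verified directly.
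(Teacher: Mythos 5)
Your proposal matches the paper's proof in structure: decompose $r = \lst(r)\p(\scl(r)) + \lft(r)$, shift $x$ to $x' = (S|Y_{\scl(r)})^{\lst(r)\p(\scl(r))}(x)$ at the same level and shifted column, then compare the trajectories of $x$ and $x'$ under $(S|Y_{\scl(r)})^{\lft(r)}$; likewise for the right tower and the wrap-around count in part three. The outline, the reduction identity $(S|Y_{\scl(r)})^r(x) = (S|Y_{\scl(r)})^{\lft(r)}(x')$, and the iteration budget for 5.2.3 are all as in the paper.

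There is, however, a genuine imprecision in your boundary bookkeeping for 5.2.1 (and symmetrically 5.2.2). You assert that ``both trajectories cross at most one column boundary'' from $|\lft(r)| \le \p(\scl(r))$ alone, and the closing obstacle paragraph tries to excuse this via ``$\q$ overwhelmingly smaller than $\p$''. This is not sufficient: when $\lft(r) > 0$ and $x'$ sits in column $d(b)$, the trajectory of $x'$ can enter the right column $d(b)+1$, which has height $\p(\scl(r)) - \q(\scl(r))$, and with only $\lft(r) \le \p(\scl(r))$ the post-crossing level $\ell + \lft(r) - \p(\scl(r))$ can exceed $\p(\scl(r)) - \q(\scl(r)) - 1$ (take $\ell$ near $\p(\scl(r))-1$), forcing a second crossing that desynchronizes the levels; smallness of $\q$ by itself does not rule this out. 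The fix, which the paper invokes (somewhat tersely) by asserting $|\lft(r)| \le \p(\scl(r)) - \q(\scl(r))$ for positive $\lft(r)$, is the observation that $\rht(r) = \lft(r) + \lst(r)\q(\scl(r))$ with $\lst(r) > 0$ since $r \in \N$, so if $\lft(r) > 0$ then $0 < \lft(r) < \rht(r) \le \p(\scl(r)) - \q(\scl(r))$ by \cref{lem:need it later}, and then $\ell + \lft(r) < 2\p(\scl(r)) - \q(\scl(r))$ rules out the second crossing. Your write-up should state this explicitly rather than lean on the general smallness of $\q$.
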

\begin{proof}
For \ref{lem:pos red S left} the effect of $(S|Y_{\scl(r)})^{r - \lft(r)}=(S|_{Y_{\scl(r)}})^{\lst(r)\p(r)}$ on any such $x$ is to increase the $\scl(r)$ coordinate by the amount $\lst(r)$ to at most $d(b)$.
When we apply $(S|Y_{\scl(r)})^{\lft(r)}$ to both $x$ and $((S|Y_{\scl(r)})^{r - \lft(r)})(x)$ the resulting points continue to agree in their first $\scl(r) - 1$ coordinates by \cref{lem:need it later}. Indeed, when $\lft(r)$ is negative the condition $1 \le x(\scl(r))$ together with $|\lft(r)| \le \p(\scl(r))$ avoids $((S|Y_{\scl(r)})^{\lft(r)})(x)$ having a negative $\scl(r)$ coordinate, and when $\lft(r)$ is positive $|\lft(r)| \le \p(\scl(r)) - \q(\scl(r))$ prevents $(S|Y_{\scl(r)})^r(x)$ from having a $\scl(r)$ coordinate strictly larger than $ d(b) + 1$.

For \ref{lem:pos red S right} the effect of $(S|Y_{\scl(r)})^{r - \rht(r)}$ on any such $x$ is to increase the $\scl(r)$ coordinate by the amount $\lst(r)$ to at most $c(b)$.
When we apply $(S|Y_{\scl(r)})^{\rht(r)}$ to both $x$ and $((S|Y_{\scl(r)})^{r - \rht(r)})(x)$ the resulting points continue to agree in their first $\scl(r) - 1$ coordinates by \cref{lem:need it later}. Indeed, when $\rht(r)$ is negative the condition $d(b) + 2 \le x(\scl(r))$ together with $|\rht(r)| \le \p(\scl(r)) - \q(\scl(r))$ avoids $((S|Y_{\scl(r)})^{\rht(r)})(x)$ having a $\scl(r)$ coordinate less than or equal to $d(b)$, and when $\rht(r)$ is positive $|\rht(r)| \le \p(\scl(r)) - \q(\scl(r))$ prevents $(S|Y_{\scl(r)})^r(x)$ from going too far up the first tower i.e.\ having $\scl(r)$ coordinate greater than 0 or have $\scl(r)$ coordinate equal to 0 and being in a level that would be in $W_{\scl(r)}$ on the right side towers.

For \ref{lem:pos red S bdd} note that $|r - \lst(r) \p(\scl(r))| \le \p(\scl(r))$ by \cref{cor:obvioius_thing} because
\[
\scl(r - \lst(r) \p(r) ) \le \scl(r) - 1
\]
so that for any $x$ that satisfies $c(b) - \lst(r) + 10 \le x(\scl(r)) \le c(b)$ there is $0 \le j \le r$ such that the $\scl(r)$ coordinate of $(S|Y_{\scl(r)})^j(x)$ is $7$.
In particular, there must be some $0 \le i \le r$ for which $(S|Y_{\scl(r)})^i(x)$ belongs to $[7^{\scl(r)}]$ and if additionally $x(\scl(r) + 1) = 7$ then we will have $(S|Y_{\scl(r)})^j(x)$ in $W_{\scl(r)+1}$. Indeed, $(S|Y_{J(r)})^\ell (x)(J(r))$ going from $c(b)$ to 0 makes $(S|Y_{J(r)})^\ell (x)(J(r)+1)$ go from $7$ to $8$.

\end{proof}

\begin{corollary}
\label{cor:pos reduc conseq}
Fix $r \in \N$ with $\scl(r)$ an unbounded scale $a(b)$ and fix $x \in X$.
\begin{enumerate}[label={\textbf{\textup{\ref{cor:pos reduc conseq}.\arabic*}}.},ref={\textbf{\ref{cor:pos reduc conseq}.\arabic*}},leftmargin=3em]
\item
\label{cor:pos red T left}
If $9 \le x(\scl(r)) \le d(b)-\lst(r)$ and $x \not\in \bad(\scl(r),r)$ then
\[
(S|Y_{\scl(r)})^r(x)=T^r(x)
\qquad
(S|Y_{\scl(r)})^{\lft(r)}(x) = T^{\lft(r)}(x)
\]
and these two points agree on the first $\scl(r)-1$ coordinates.
\item
\label{cor:pos red T right}
If $d(b) + 2 \le x(\scl(r)) \le c(b) - \lst(r)$ and $x \not\in \bad(\scl(r),r)$ then
\[
(S|Y_{\scl(r)})^r(x) = T^r(x)
\qquad
(S|Y_{\scl(r)})^{\rht(r)}(x) = T^{\rht(r)}(x)
\]
and these two points agree in their first $\scl(r)-1$ coordinates.
\item
\label{cor:pos red T friends}
If $\lst(r) \geq 11$ and $c(b) - \lst(r) + 10 \le x(\scl(r)) \le c(b)$ and $x(\scl(r)+1) = 7$ and $x \not\in \bad(\scl(r),r+1)$ then $(y,x)$ are $(r,\scl(r)+1)$-friends whenever $y$ satisfies $x(k) = y(k)$ for all $k \le \scl(r)$ and $y(\scl(r)+1) = 3$ and $y \not\in \bad(\scl(r),r)$.
\end{enumerate}
\end{corollary}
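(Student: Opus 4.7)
The plan is to prove each part by upgrading the corresponding $(S|Y_{\scl(r)})$-statement of \cref{lem:reduction positive} to a statement about $T$. In each case the essential task is to verify that the orbit $(S|Y_{\scl(r)})^i(x)$ avoids every removed cylinder $W_j$ with $j > \scl(r)$ over the relevant range of $i$; once that is checked, the description of $T$ given just before \cref{lem:cylinder_decay} gives $(S|Y_{\scl(r)})^i(x) = T^i(x)$.

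For parts \ref{cor:pos red T left} and \ref{cor:pos red T right}, the hypothesis $x \notin \bad(\scl(r), r)$ handles the unbounded removals $W_{a(m)}$ with $a(m) > \scl(r)$, so what remains is to rule out the bounded $W_j = [7^{j-1}8]$ for $j > \scl(r)$, which can be entered only if the $\scl(r)$-th coordinate of the orbit point is $7$. In \ref{cor:pos red T left}, the coordinate analysis of \ref{lem:pos red S left} shows that the $\scl(r)$-th coordinate moves through $[x(\scl(r)), x(\scl(r))+\lst(r)] \subseteq [9, d(b)]$ during the first $\lst(r)\p(\scl(r))$ iterations and shifts by at most $1$ in the remaining $|\lft(r)| \le \p(\scl(r))$ iterations, so it lies in $[8, d(b)+1]$ throughout and in particular never equals $7$. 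Part \ref{cor:pos red T right} is symmetric via \ref{lem:pos red S right}, with the coordinate confined to the right-tower range $[d(b)+1, c(b)]$. The agreement of the first $\scl(r) - 1$ coordinates is then supplied by the corresponding part of \cref{lem:reduction positive}.

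For part \ref{cor:pos red T friends}, the plan is to verify each condition of \cref{def:friends} at scale $\scl(r)+1$. Condition \ref{cond:start same} is immediate from the hypothesis $y(k) = x(k)$ for $k \le \scl(r)$. \ref{lem:pos red S bdd} provides some $0 \le h \le r$ with $(S|Y_{\scl(r)})^h(x) \in W_{\scl(r)+1}$, supplying \ref{cond:friend hit}; uniqueness of such $h$ follows because $|r| \le \tfrac{3}{4}\p(\scl(r)+1)$ prevents the $\scl(r)$-th coordinate from wrapping through $c(b) \to 0$ twice. For \ref{cond:friend miss}, the $(\scl(r)+1)$-th coordinate of $(S|Y_{\scl(r)})^i(y)$ starts at $3$ and can shift by at most $1$ over $|i| \le |r|+1$ iterations, so never attains $8$ and hence the orbit of $y$ avoids $W_{\scl(r)+1}$. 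Conditions \ref{cond:T defined 1} and \ref{cond:T defined 2} then follow from the coordinate-tracking argument of the previous paragraph, applied to $y$ (using $y \notin \bad(\scl(r), r)$) and to $x$ (using $x \notin \bad(\scl(r), r+1)$); the extra margin in $x$'s $\bad$-hypothesis accommodates $x$'s single transit through $W_{\scl(r)+1}$.

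The main obstacle will be the fine-grained bookkeeping in \ref{cor:pos red T friends}: tracking both the $\scl(r)$-th and $(\scl(r)+1)$-th coordinates for both $x$ and $y$ simultaneously, and verifying that the asymmetric $\bad$-hypotheses provide exactly the margin needed for each friendly pair condition.
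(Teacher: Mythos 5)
The proposal is correct and takes essentially the same approach as the paper: the $\bad$ hypothesis rules out the unbounded removals $W_{a(m)}$, while tracking the $\scl(r)$-th coordinate (it stays in the relevant tower range and so is never $7$) rules out the bounded removals $W_j = [7^{j-1}8]$ with $j > \scl(r)$; for part 3 the friendly-pair conditions are verified directly using \cref{lem:pos red S bdd} for $x$ and the $(\scl(r)+1)$-coordinate confinement for $y$, exactly as in the paper. The coordinate-range assertions in parts 1 and 2 are slightly imprecise (the $\scl(r)$-coordinate can briefly wrap to $0$, which is outside $[8,d(b)+1]$ resp.\ $[d(b)+1,c(b)]$), but since $0 \ne 7$ this does not affect the conclusion.
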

\begin{proof}
For \ref{cor:pos red T left} the condition $x(\scl(r)) \ge 9$ guarantees the $\scl(r)$ coordinate of $(S|Y_{\scl(r)})^i(x)$ is not 7 for all $-|\lft(r)| \le i\le r$ by \cref{lem:need it later}.
Together with the assumption that $x$ does not belong to $\bad(\scl(r),r)$ it follows that $(S|Y_{\scl(r)})^i(x) \notin W_k$ for all $-|\lft(r)| \le i \le r$ and all $k > \scl(r)$.
Thus $(S|Y_{\scl(r)})^i(x) = T^i(x)$ for all $-|\lft(r)| \le i\le r$.
As $x$ satisfies in particular the hypothesis of \ref{lem:pos red S left} we get the desired agreement in the first $\scl(r)-1$ coordinates.

To prove \ref{cor:pos red T right} note there is no possibility of $(S|Y_{\scl(r)})^i(x)$ belonging to some $W_k$ with $k > \scl(r)$ for some $-|\rht(r)| \le i \le r$ and we have the desired conclusion by appealing to \ref{lem:pos red S right}.

For \ref{cor:pos red T friends} we need to verify the properties in \cref{def:friends}.
Condition \ref{cond:start same} is one of our assumptions.
To prove \ref{cond:T defined 1} and \ref{cond:friend miss} combine $y \not\in \bad(\scl(r),r)$ with $y(\scl(r) + 1) = 3$ to get $(S|Y_{\scl(r)})^i(y) \not\in W_k$ for all $k \ge \scl(r)+1$ and all $1 \le i \le r$. For \ref{cond:T defined 2} and \ref{cond:friend hit} combine $x \not\in \bad(\scl(r),r+1)$ with the facts that there is a unique $1 \le i \le r$ for which $(S|Y_{\scl(r)})^i(x)$ belongs to $[7^{\scl(r)}]$ and that $x(\scl(r) + 1) = 7$ allows \ref{lem:pos red S bdd} to guarantee there is $1 \le i < r$ with $(S|Y_{\scl(r)})^i(x)$ in $W_{\scl(r)+1}$.
\end{proof}

We have additionally the following reduction statements when the time $r$ is negative. The proofs are entirely analagous.

\begin{lemma}
\label{lem:reduction negative}
Fix $r \in -\N$ with $\scl(r)$ an unbounded scale $a(b)$ and fix $x \in X$.
\begin{enumerate}[label={\textbf{\textup{\ref{lem:reduction negative}.\arabic*}}.},ref={\textbf{\ref{lem:reduction negative}.\arabic*}},leftmargin=3em]
\item
\label{lem:neg red S left}
If $-\lst(r)+1 \le x(\scl(r)) \le d(b) -1$ then $(S|Y_{\scl(r)})^r(x)$ and $(S|Y_{\scl(r)})^{\lft(r)}(x)$ agree in their first $\scl(r) - 1$ coordinates.
\item
\label{lem:neg red S right}
If $d(b) + 2 - \lst(r) \le x(\scl(r)) \le c(b)-2$ then $(S|Y_{\scl(r)})^r(x)$ and $(S|Y_{\scl(r)})^{\rht(r)}(x)$ agree in their first $\scl(r)-1$ coordinates.
\item
\label{lem: neg red S bdd}
If $\lst(r) \leq -11$ and $10 \le x(\scl(r)) \le -\lst(r) + 1$ then $((S|Y_{\scl(r)})^i(x))(\scl(r)) = 0$ for some $0 \ge i \ge r$.
If $x(\scl(r) + 1) = 8$ as well there is $0 \ge i \ge r$ with $(S|Y_{\scl(r)})^i(x) \in W_{\scl(r) + 1}$.
\end{enumerate}
\end{lemma}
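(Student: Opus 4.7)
My plan is to prove Lemma~\ref{lem:reduction negative} as the mirror image of Lemma~\ref{lem:reduction positive}, following that earlier argument essentially verbatim after inverting signs. The key bookkeeping observation is that when $r < 0$ the leading coefficient $\lst(r)$ is also negative, so
\[
(S|Y_{\scl(r)})^{r - \lft(r)} \;=\; (S|Y_{\scl(r)})^{\lst(r)\p(\scl(r))}
\]
acts as a \emph{backward} shift along the left tower at scale $\scl(r)$, decreasing the $\scl(r)$-coordinate by $|\lst(r)| = -\lst(r)$; and similarly $(S|Y_{\scl(r)})^{r - \rht(r)}$ acts as the mirror of the forward shift on the right tower, with the $\q(\scl(r))$ correction unchanged. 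With that reorientation the three parts proceed exactly as in the positive case.

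For \ref{lem:neg red S left}, the lower bound $x(\scl(r)) \ge -\lst(r)+1$ is the one chosen so that after decreasing the $\scl(r)$-coordinate by $|\lst(r)|$ we land with $\scl(r)$-coordinate at least $1$ (so the intermediate point stays in the left tower and no right-tower removals intervene), while the upper bound $x(\scl(r)) \le d(b) - 1$ keeps us uniformly below $d(b)$. We then apply $(S|Y_{\scl(r)})^{\lft(r)}$ to both $x$ and to $(S|Y_{\scl(r)})^{r-\lft(r)}(x)$; by \cref{lem:need it later} we have $|\lft(r)| \le \p(\scl(r))$, so this residual shift changes the $\scl(r)$-coordinate by at most $1$ and cannot eject either trajectory from the left tower. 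Hence the two endpoints continue to agree on their first $\scl(r)-1$ coordinates, exactly as in \ref{lem:pos red S left}. Part \ref{lem:neg red S right} follows the same template on the right tower using $|\rht(r)| \le \p(\scl(r)) - \q(\scl(r))$ from \cref{lem:need it later}, with the constraint $d(b)+2-\lst(r) \le x(\scl(r)) \le c(b) - 2$ chosen to land the main backward shift safely in the interior range $[d(b)+2, c(b)-2]$ of the right tower.

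For \ref{lem: neg red S bdd} I would mirror the argument of \ref{lem:pos red S bdd}: since $|\lst(r)| \ge 11$ and $x(\scl(r)) \le -\lst(r)+1$, the total backward shift strictly exceeds $x(\scl(r))$, so at some $0 \ge i \ge r$ the $\scl(r)$-coordinate of $(S|Y_{\scl(r)})^i(x)$ must take the value $0$ (equivalently, the backward orbit is forced to wrap from $0$ down to $c(b)$, decrementing the $(\scl(r)+1)$-st coordinate from $8$ to $7$). Before this wrap occurs, the $\scl(r)$-coordinate takes every value between $c_0 = x(\scl(r)) \ge 10$ and $0$; while it sits at the value $7$ (and the $(\scl(r)+1)$-coordinate is still $8$, by the hypothesis $x(\scl(r)+1) = 8$) the backward orbit traverses all $\p(\scl(r))$ positions of that column, and one of these positions has first $\scl(r)-1$ coordinates identically equal to $7$. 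That cylinder is precisely $W_{\scl(r)+1} = [7^{\scl(r)}8]$, giving the second claim.

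The main obstacle is just tracking signs and verifying the boundary cases of the coordinate constraints against the two-sided bounds $|\lft(r)| \le \p(\scl(r))$ and $|\rht(r)| \le \p(\scl(r)) - \q(\scl(r))$; once the direction of the shift is flipped, the dynamics of $S|Y_{\scl(r)}$ on the backward orbit is the literal mirror of the forward case and no new ideas are needed.
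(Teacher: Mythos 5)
Your proposal is essentially the paper's own proof: the paper disposes of this lemma with the single line ``The proofs are entirely analogous,'' and you have carried out the analogy correctly, with the right sign flips and the right observation for \ref{lem: neg red S bdd} that the orbit enters $W_{\scl(r)+1}$ \emph{before} the backward wrap (while the $(\scl(r)+1)$-coordinate is still $8$), mirroring the positive case where it enters after the forward wrap (once that coordinate has ticked from $7$ to $8$). The verifications in \ref{lem:neg red S left} and \ref{lem:neg red S right} that the intermediate point stays within the appropriate tower and that the residual shift cannot change the tower membership match the paper's argument and bounds.

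One point is looser than it should be: in \ref{lem: neg red S bdd} you assert that ``the total backward shift strictly exceeds $x(\scl(r))$'' and conclude the $\scl(r)$-coordinate reaches $0$. Measured in columns traversed, the backward shift is roughly $|\lst(r)|\pm\tfrac{1}{2}$, while the hypothesis only gives $x(\scl(r))\le-\lst(r)+1=|\lst(r)|+1$, so in the worst case (e.g.\ $x(\scl(r))=|\lst(r)|+1$ with the residual $\lft(r)$ near $+\p(\scl(r))/2$ and the initial level near the top of its column) the orbit need not reach coordinate $0$. This is immaterial for the rest of the paper: only the ``$\in W_{\scl(r)+1}$'' conclusion is used (in \ref{conc:neg reduc friends}), your argument does establish that the orbit reaches and fully traverses column $7$, and the same discrepancy is already latent in the positive-case statement (whose proof in fact argues for reaching coordinate $7$, not $0$). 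But the ``total shift exceeds the coordinate, hence coordinate reaches $0$'' inference should not be presented as a consequence of the stated bounds.
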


\begin{corollary}
\label{cor:neg reduc conseq}
Fix $r \in -\N$ with $\scl(r)$ an unbounded scale $a(b)$ and fix $x \in X$.
\begin{enumerate}
[label={\textbf{\textup{\ref{cor:neg reduc conseq}.\arabic*}}.},ref={\textbf{\ref{cor:neg reduc conseq}.\arabic*}},leftmargin=3em]
\item
\label{cor:neg red T left}
If $- \lst(r) + 10 \le x(\scl(r)) \le d(b)-1$ and $x \notin \bad(\scl(r),r)$ then
\[
(S|Y_{\scl(r)})^r(x) = T^r(x)
\qquad
(S|Y_{\scl(r)})^{\lft(r)}(x) = T^{\lft(r)}(x)
\]
and these two points agree in the first $\scl(r)-1$ coordinates.
\item
\label{cor:neg red T right}
If $d(b) + 2 - \lst(r) \le x(\scl(r)) \le c(b)-2$ and $x \notin \bad(\scl(r),r)$ then
\[
(S|Y_{\scl(r)})^r(x)=T^r(x)
\qquad
(S|Y_{\scl(r)})^{\rht(r)}(x)=T^{\rht(r)}(x)
\]
and these two points agree in their first $\scl(r)-1$ coordinates.
\item
\label{conc:neg reduc friends}
If $\lst(r) \leq -11$ and $10 \le x(\scl(r)) \le -\lst(r) + 1$ and $x(\scl(r)+1) = 8$ and $x \notin \bad(\scl(r),r+1)$ then $(y,x)$ are $(r,\scl(r) + 1)$-friends whenever $y$ satisfies $x(k) = y(k)$ for all $k \le \scl(r)$ and $y(\scl(r) + 1) = 3$ and $y \notin \bad(\scl(r),r)$.
\end{enumerate}
\end{corollary}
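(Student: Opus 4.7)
The proof is entirely parallel to that of \cref{cor:pos reduc conseq}, with signs reversed throughout to account for $r<0$; the paper's remark that ``The proofs are entirely analogous'' essentially says that the same bookkeeping carries over. I will indicate how each step is adapted.

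For \ref{cor:neg red T left}, the hypothesis $-\lst(r)+10 \le x(\scl(r)) \le d(b)-1$ prevents the $\scl(r)$-coordinate of $(S|Y_{\scl(r)})^i(x)$ from ever equaling $7$ along the relevant segment of the $(S|Y_{\scl(r)})$-orbit of $x$, using \cref{lem:need it later} to bound $|\lft(r)| \le \p(\scl(r))$. Combined with $x \notin \bad(\scl(r),r)$ this forces $(S|Y_{\scl(r)})^i(x) \notin W_k$ for all $k > \scl(r)$ and all $i$ in the range between $r$ and $\lft(r)$, so that $T^i(x) = (S|Y_{\scl(r)})^i(x)$ throughout; the claimed agreement in the first $\scl(r)-1$ coordinates is then \ref{lem:neg red S left}. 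For \ref{cor:neg red T right} the reasoning is identical with $\rht(r)$ in place of $\lft(r)$, using $|\rht(r)| \le \p(\scl(r))-\q(\scl(r))$ and \ref{lem:neg red S right}.

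For the friends statement \ref{conc:neg reduc friends} I verify the five conditions in \cref{def:friends}. Property \ref{cond:start same} is immediate from the hypothesis that $x$ and $y$ agree on coordinates $1,\dots,\scl(r)$. For \ref{cond:T defined 1} and \ref{cond:friend miss}, the condition $y(\scl(r)+1) = 3$ keeps $y$ away from any $W_k$ cylinder with $k=\scl(r)+1$, and together with $y \notin \bad(\scl(r),r)$ rules out $(S|Y_{\scl(r)})^i(y) \in W_k$ for any $k \ge \scl(r)+1$ and any $r \le i \le -1$; the hypothesis on $y(\scl(r))$ likewise prevents any hit of $W_{\scl(r)}$. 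For \ref{cond:T defined 2} and \ref{cond:friend hit}, I invoke \ref{lem: neg red S bdd}: under $\lst(r) \le -11$ and $10 \le x(\scl(r)) \le -\lst(r)+1$ it produces a unique index $0 \ge i \ge r$ with $((S|Y_{\scl(r)})^i(x))(\scl(r)) = 0$, and then $x(\scl(r)+1) = 8$ places the orbit point in $W_{\scl(r)+1}$, while $x \notin \bad(\scl(r),r+1)$ prevents any collision with a finer $W_k$, leaving the required single hit.

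The only nontrivial aspect is the sign bookkeeping: the backward $(S|Y_{\scl(r)})$-orbit now plays the role previously taken by the forward orbit, and the ranges of $x(\scl(r))$ must be shifted so that the $-\lst(r)$ backward steps at scale $\scl(r)$ neither cross the critical value $7$ nor enter the right-hand removal strip. Once these windows are aligned with the sign of $r$, every remaining step is a direct transcription of the positive case.
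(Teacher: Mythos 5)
Your proof is correct and takes the same approach the paper intends, namely a direct transcription of the proof of \cref{cor:pos reduc conseq} with the signs and ranges flipped for $r<0$: \ref{cor:neg red T left} and \ref{cor:neg red T right} follow from the orbit avoidance forced by the window on $x(\scl(r))$ together with $x\notin\bad(\scl(r),r)$ and then \ref{lem:neg red S left}/\ref{lem:neg red S right}, and \ref{conc:neg reduc friends} follows by checking the five conditions in \cref{def:friends} using \ref{lem: neg red S bdd}. One small point of precision: the index at which $((S|Y_{\scl(r)})^i(x))(\scl(r))=0$ is not the index at which the orbit enters $W_{\scl(r)+1}=[7^{\scl(r)}8]$ — those are two separate conclusions of \ref{lem: neg red S bdd} — and the uniqueness needed for \ref{cond:friend hit} should be phrased, as in the paper's positive case, as uniqueness of the $i$ with $(S|Y_{\scl(r)})^i(x)\in[7^{\scl(r)}]$ in the window $r\le i\le 0$ (which holds since $|r|$ is less than one full $S|Y_{\scl(r)}$-cycle), combined with $W_{\scl(r)+1}\subset[7^{\scl(r)}]$; but this is a matter of phrasing and does not affect the correctness of the argument.
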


\begin{corollary}
\label{cor:small e(J)}
Fix $r \in \Z$ with $\scl(r)$ an unbounded scale $a(b)$ and $|\lst(r)| < 10$.
There exists $B \in X$ measurable and $\Phi:B \to X$ bijective and measurable on its image so that $(x,\phi(x))$ are $(r,\scl(r)+1)$-friends and 
$$\mu(B)\geq \frac{1}{2} \frac {1}{20} \frac{1}{20} \frac{1}{d(b)}.$$
\end{corollary}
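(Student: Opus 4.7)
I would handle $r>0$; the case $r<0$ is entirely symmetric applied to the backward $S|Y_{a(b)}$-orbit. Write $\scl(r)=a(b)$. From $|\lst(r)|\le 9$ together with \cref{lem:expansion_bound} applied to $r-\lst(r)\p(a(b))$ (which has scale at most $a(b)-1$), we get $|r|\le 10\,\p(a(b))$; the definition of $\scl$ forces $|r|>\p(a(b))/2$. The formula $\p(a(b)+1)=(2d(b)+2)\p(a(b))-(d(b)+1)\q(a(b))$ then gives $|r|+1\ll\p(a(b)+1)$, so under $|r|+1$ iterations of $S|Y_{a(b)}$ the $(a(b)+1)$-coordinate of any point advances by at most one and no higher coordinate changes.

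The plan is to take $\Phi\colon\Omega\to\Omega$ the $\nu$-preserving involution swapping the $(a(b)+1)$-coordinate between $4$ and $8$, and to set
\[
B=\bigl\{x\in X:x(a(b)+1)=4,\ \Phi(x)\in X,\ (S|Y_{a(b)})^h(\Phi(x))\in W_{a(b)+1}\ \text{for some }1\le h\le |r|\bigr\}\setminus\bad(a(b),2|r|).
\]
Verifying that $(x,\Phi(x))$ is an $(r,a(b)+1)$-friendly pair in the sense of \cref{def:friends} is then a direct coordinate inspection: $x$ and $\Phi(x)$ agree on coordinates $1,\ldots,a(b)$; $x$'s $S|Y_{a(b)}$-orbit keeps its $(a(b)+1)$-coordinate in $\{4,5\}$, never matching $W_{a(b)+1}=[7^{a(b)}8]$ nor any bounded $W_k=[7^{k-1}8]$ with $k>a(b)+1$ (which requires that coordinate to equal $7$) nor any unbounded $W_{a(m)}$ with $m>b$ (which requires it to equal $9$, additionally controlled by the $\bad(a(b),2|r|)$ removal); while $\Phi(x)$'s orbit keeps that coordinate equal to $8$, so it misses every $W_k$ for $k>a(b)+1$ and hits $W_{a(b)+1}$ at a unique step because $|r|+1<\p(a(b)+1)$ prevents the first $a(b)$ coordinates from returning to $(7,\ldots,7)$ twice.

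The key quantitative input is the measure bound. Since $\Phi$ preserves $\mu$, $\mu(B)=\mu(\Phi(B))$, and the $|r|$ sets $(S|Y_{a(b)})^{-h}(W_{a(b)+1})$ are pairwise disjoint cylinders of length $a(b)+1$ in $Y_{a(b)}$. Restricting to the column $\{z:z(a(b)+1)=8\}$ yields $\min(|r|,M)$ of them, where $M$ is the number of $S|Y_{a(b)}$-steps from $[0^{a(b)}8]$ to $[7^{a(b)}8]$. Decomposing that first-$a(b)$-coordinate cycle by the value of coordinate $a(b)$, each of the seven columns with $a(b)$-coordinate in $\{0,\ldots,6\}$ contributes a full cycle of length $\p(a(b))$ of the first $a(b)-1$ coordinates before the column with $a(b)$-coordinate equal to $7$ is reached, giving $M\ge 7\p(a(b))$. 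Combined with $|r|\ge\p(a(b))/2$ this forces $\min(|r|,M)\ge\p(a(b))/2$, and inserting $\p(a(b))\ge(7/8)\,t(a(b)-1)$ from \cref{lem:removed_mass} together with $t(a(b)+1)=10(2d(b)+2)\,t(a(b)-1)$ yields $\nu(\Phi(B))\ge 7/(320(d(b)+1))$, comfortably exceeding the target $\tfrac{1}{800\,d(b)}$ once the genuinely negligible corrections from $\bad(a(b),2|r|)$, $\nu(\Omega\setminus X)$, and the set $\{x:x\in[7^{a(b)}]\}$ (on which $\Phi(x)\in W_{a(b)+1}$ so $\Phi(x)\notin X$) are absorbed. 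The main technical point is the inequality $M\ge 7\p(a(b))$; it lets us work entirely within a single coordinate-$(a(b)+1)$ column and avoids having to combine preimages lying in different columns.
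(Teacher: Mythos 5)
Your construction matches the paper's, up to two cosmetic differences: you fix $x(a(b)+1)=4$ and let $\Phi(x)$ (with coordinate value $8$) be the member whose orbit hits $W_{a(b)+1}$, whereas the paper takes $x$ to be the point whose orbit hits $W_{a(b)+1}$ and $\phi(x)$ (coordinate set to $4$) the one that misses; and the paper additionally fixes $x(a(b)+2)=5$ while you instead count directly how many of the preimages $(S|Y_{a(b)})^{-h}W_{a(b)+1}$, $1\le h\le|r|$, lie in the column $\{z:z(a(b)+1)=8\}$ via the inequality $M\ge 7\p(a(b))$. Your measure bookkeeping is in fact more explicit than the paper's terse computation, and correct.

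One imprecision: your claim that $\Phi(x)$'s $S|Y_{a(b)}$-orbit misses every $W_k$ with $k>a(b)+1$ because its $(a(b)+1)$-coordinate stays at $8$ is valid only for bounded $k$, where $W_k=[7^{k-1}8]$ forces that coordinate to equal $7$. For unbounded $W_{a(m)}$ with $m>b$, membership is determined by proximity to the top of the scale-$(a(m)-1)$ tower and is not excluded by the $(a(b)+1)$-coordinate equalling $8$; the $\bad(a(b),2|r|)$ deletion in your definition of $B$ only protects $x$, not $\Phi(x)$. To guarantee \ref{cond:T defined 2} for $z=\Phi(x)$ you should also delete $\Phi^{-1}\bigl(\bad(a(b),2|r|)\bigr)$ from $B$. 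Since $\Phi$ is $\nu$-preserving this costs the same negligible amount of measure as the deletion you already make, so the constant in the conclusion is unaffected. This is exactly why the preparatory statements \ref{cor:pos red T friends} and \ref{conc:neg reduc friends} impose a $\bad$-condition on both members of the friendly pair, and the paper's own proof of this corollary is similarly terse on this point.
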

\begin{proof}
We prove the case $r>0$. The case $r<0$ is analogous.
Let $J = \scl(r)$ and let
\[
B=\{y\in \Omega :(S|Y_{J(r)})^i(x) \in W_{\scl(r) + 1} \text{ for some } 0 \leq i<r \textup{ and } x(\scl(r) + 2) = 5 \} \cap X
\]
which is the intersection with $X$ of a union of cylinders of length $\scl(r)$.
Let $\phi:B \to X$ by $\phi(x)_j=x_j$ for all $j \neq J+1$ and $\phi(x)_{J+1}=4$. As in the proof of Corollary \ref{cor:pos reduc conseq} and \ref{cor:neg reduc conseq} (3) we have that $x,\phi(x)$ are $(r,J+1)$-friends so long as $x\notin V(b,r)$.  
For our measure estimate, we have that 
$$\mu(\{x:T^ix \in S^{-1}W_{J+1} \text{ for some }0\leq i<r\})>\frac r {t(J+1)}>\frac 1 {20d(J)}.$$ By our independence result, Lemma \ref{lem:indep} applied with $k=1$,  and the fact that there are 10 symbols we have that adding the condition that $x(J+2)=5$ reduces the measure by less than a factor of $2 \cdot 10$. Lastly, the condition not being in $V(b,r)$  
removes less than 
$$2 \cdot r   \sum_{a(j)>J(r)}\frac{Q(j)}{P(j)}< 2 \cdot 10 p(J(r)) \sum_{a(j)>J(r)}\frac{Q(j)}{P(j)}<\frac 1 2 \cdot \frac 1 {20}\frac 1 {20} \frac 1 {d(J)}$$
giving the claim.
\end{proof}

By similar estimates to what we have done earlier \ref{cor:pos red T friends}, \ref{conc:neg reduc friends} and Corollary \ref{cor:small e(J)} give the following as well.

\begin{lemma}
\label{lem:friends}
Fix $r \in \N$ with $\scl(r)$ unbounded.
With
\[
A_0 = \{ x \in X : x \textup{ does not satisfy first assumption of \ref{cor:pos red T left} or \ref{cor:pos red T right}} \}
\]
there is $B\subset A_0$, and $\Phi:B \to X$, a measure preserving injection so that $\mu(B)>\mu(A_0)10^{-6}$ and $(x,\Phi(x))$ are $(r,\scl(r)+1)$-friends.

The same is true with
\[
A_0 = \{ x \in X : x \textup{ does not satisfy first assumption of \ref{cor:neg red T left} or \ref{cor:neg red T right}} \}
\]
instead.
\end{lemma}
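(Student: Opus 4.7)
The plan is to package the three preceding results---\ref{cor:pos red T friends}, \ref{conc:neg reduc friends}, and Corollary~\ref{cor:small e(J)}---into the single statement required, case-splitting on the magnitude of $\lst(r)$ and using Lemma~\ref{lem:indep} to compare $\mu(B)$ to $\mu(A_0)$. I would treat $r>0$; the case $r<0$ is symmetric. Write $J=\scl(r)=a(b)$. Unpacking the definition of $A_0$, a point $x$ belongs to $A_0$ iff $x(J)$ lies in one of the three ranges $\{0,\dots,8\}$, $\{d(b)-\lst(r)+1,\dots,d(b)+1\}$, or $\{c(b)-\lst(r)+1,\dots,c(b)\}$, so by Lemma~\ref{lem:indep}, $\mu(A_0)\le C\max(\lst(r),1)/d(b)$ for an absolute constant $C$.

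In the subcase $\lst(r)\ge 11$, I would take $B\subset A_0$ to be the set of $x$ with $x(J)\in[c(b)-\lst(r)+10,\,c(b)]$, $x(J+1)=7$, and $x\notin\bad(J,r+1)$, and define $\Phi(x)$ by setting the $(J+1)$th coordinate to $3$ (leaving all other coordinates fixed). Then $\Phi$ is a measure-preserving injection (it alters one coordinate in a product measure), and \ref{cor:pos red T friends} gives directly that $(\Phi(x),x)$ is an $(r,J+1)$ friendly pair for every $x\in B$. Combining independence with the standard bad-set bound from the proof of Corollary~\ref{cor:small e(J)} yields
\[
\frac{\mu(B)}{\mu(A_0)}\gtrsim \frac{\lst(r)-9}{10(2\lst(r)+11)},
\]
comfortably above $10^{-6}$.

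In the subcase $\lst(r)\le 10$, $\mu(A_0)$ is of order $1/d(b)$ and Corollary~\ref{cor:small e(J)} already supplies a candidate friend set $B^\ast$ of measure at least $1/(800\,d(b))$ with a friend map $\phi$. The defining conditions on $B^\ast$ involve the first $J-1$ coordinates, $x(J+1)$, $x(J+2)$, and an orbit-hit condition at scale $J+1$, and force $x(J)$ to lie in a window of $O(\lst(r))$ values from which the $J$th coordinate can reach $7$ within the $r$-step orbit of $S|Y_J$. In this regime nearly all of that window lies in $\{0,\dots,8\}\subset A_0$, so setting $B=B^\ast\cap A_0$ loses only a bounded factor, giving $\mu(B)/\mu(A_0)>10^{-6}$. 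The negative-$r$ case runs verbatim with \ref{conc:neg reduc friends} in place of \ref{cor:pos red T friends} and the corresponding branch of Corollary~\ref{cor:small e(J)}.

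The main delicate point is the last claim---that $B^\ast\cap A_0$ retains a positive fraction of $\mu(B^\ast)$. This reduces via Lemma~\ref{lem:indep} to tracking which values of $x(J)$ are compatible with the orbit-hit condition defining $B^\ast$, which is routine once one recalls that $(S|Y_J)^{\p(J)}$ essentially increments the $J$th coordinate by one modulo $c(b)+1$ (up to removals at higher scales, whose contribution is controlled by the usual $\sum\P(n-1)/\P(n)$ bound).
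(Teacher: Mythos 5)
Your proposal takes essentially the same route as the paper: bound $\mu(A_0)$ from above by a quantity comparable to $\lst(r)/d(b)$ via Lemma~\ref{lem:indep}, split on the size of $\lst(r)$, and produce the friendly pairs from \ref{cor:pos red T friends} (resp.\ \ref{conc:neg reduc friends}) when $\lst(r)$ is large and from Corollary~\ref{cor:small e(J)} when it is small. The paper's proof (suppressed in the final text, which only cites these three results) follows precisely this outline, with the measure estimate $\mu(A_0)\lesssim \lst(r)/d(b)$ and the friend-set estimate $\gtrsim \lst(r)/d(b)$ up to absolute constants.

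One point where you are actually \emph{more} careful than the source: the lemma demands $B\subset A_0$, and Corollary~\ref{cor:small e(J)} does not assert that its set is contained in $A_0$; your step of taking $B=B^\ast\cap A_0$ and arguing the intersection retains a positive fraction is the right way to close that gap, and your heuristic about the window of admissible $x(\scl(r))$ values is the correct mechanism (roughly $x(\scl(r))\in\{0,\dots,8\}$ or near $c(b)$ in the cycling case, both of which land in $A_0$ up to a bounded proportion).

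Two small cautions. First, neither your case split nor the paper's hidden one quite covers $\lst(r)=10$: Corollary~\ref{cor:small e(J)} is stated for $|\lst(r)|<10$ while \ref{cor:pos red T friends} and \ref{conc:neg reduc friends} require $|\lst(r)|\geq 11$; this is an off-by-one boundary issue that is easy to patch (e.g.\ checking that the argument of Corollary~\ref{cor:small e(J)} works for $|\lst(r)|\leq 10$) but should be flagged. Second, in the $\lst(r)\geq 11$ branch you invoke \ref{cor:pos red T friends} but do not explicitly verify the hypothesis $y\notin\bad(\scl(r),r)$ for $y=\Phi(x)$; this is handled in the analogous Theorem~\ref{thm:bounded wfriends} by excising $\bad(\scl(r),8r)$ from the domain (using that $\Phi(x)=T^n(x)$ for some bounded-multiple-of-$r$ value of $n$), and the same remedy should be stated here.
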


\subsection{Key result for unbounded scales}
\label{subsec:reduction main}

In this section we state and prove the key result needed for the proof of \cref{thm:mild mixing}. Fix an unbounded scale $\tau \in \N$.
Given $r \in \Z$ with $\scl(r) > \tau$ the proposition keeps track -- using \cref{cor:pos reduc conseq} and \cref{cor:neg reduc conseq} -- of the behavior of points under first $T^{\lft(r)}$ and $T^{\rht(r)}$ and then under $T^{\lft(\lft(r))}, T^{\rht(\lft(r))}, T^{\lft(\rht(r))}$ and $T^{\rht(\rht(r))}$ and so on until we have applied compositions of $\lft$ and $\rht$ enough times to arrive at the scale $\tau$. There are various intricacies to keep track of.
For example, it may be the case that $\lft(\rht(\lft(r)))$ is at a bounded scale, it may be that $\lst(\lft(\lft(r)))$ is either very small or very large, and it may be that some compositions of $\lft$ and $\rht$ reduce $r$ to a scale smaller than $\tau$.
We begin with a preparatory lemma.

\begin{lemma}
\label{lem:for pairing}
If $0<|k|\leq 2d(r)+3$ then either $\scl(k\Q(r))=a(r-1)$ or $k\Q(r)$ is at a bounded scale. 
Moreover, either
\[
\scl \Big(\lft(k\Q(r))\Big)
=
\scl \Big(\rht(k\Q(r))\Big)
=
a(r-2)
\]
or at least one of $\scl(\lft(k\Q(r)))$, $\scl(\rht(k\Q(r)))$ is at a bounded scale.
Moreover, in the first case, both $\scl(\lft(k\Q(r)))$ and $\scl(\rht(k\Q(r)))$ have the form $k'\Q(r-1)$ for $|k'|\leq 2d(r-2)+3$. 

If $\scl(b)=a(r-\ell)$ for $\ell \geq 2$ and $\scl(k \Q(r)+b)<a(r)$ then $\scl(k\Q(r)+b)=a(r-1)$ or $k \Q(r)+b$ is at a bounded scale. Moreover
\[
\scl(\lft(k\Q(r)+b)) = \scl(\rht(k\Q(r)+b)) = a(r-2)
\]
or at least one of $\scl(\lft(k\Q(r)+b))$, $\scl(\rht(k\Q(r)+b))$ is at a bounded scale greater than $a(r-2)$.
Moreover, in the first case,  both $\scl(\lft(k\Q(r)+b))$ and $\scl(\rht(k\Q(r)+b))$ have the form $k\Q(r-1)+b$ for $|k|<2d(r-1)+2=c(a(r-1))+1$.
\end{lemma}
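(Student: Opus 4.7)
The plan is to substitute the recursive definition of $\Q(r)$ and show that the resulting expression reduces cleanly at scale $a(r-1)$, then iterate one further step.

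First, use $\Q(r) = 2\P(r-1) - \Q(r-1)$ from the construction to write
\[
k\Q(r) = 2k\,\P(r-1) - k\,\Q(r-1).
\]
The very fast growth of $\P$ between adjacent unbounded scales (each of the $a(r-1) - a(r-2) - 1 = 2^{r+3}-1$ bounded stages multiplies $\p$ by roughly $10$) combined with $\Q(r-1) \le 2\P(r-2)$ gives $|k\Q(r-1)| \ll \P(r-1)$ under the hypothesis $|k| \le 2d(r)+3 = 4d(r-1)+3$. A parallel estimate yields $|k\Q(r)| \ll \P(r)$, so $\scl(k\Q(r)) < a(r)$; while $|k\Q(r)| \ge \Q(r) \ge \P(r-1) > \p(a(r-1))/2$ gives $\scl(k\Q(r)) \ge a(r-1)$. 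Since every scale strictly between $a(r-1)$ and $a(r)$ is bounded by construction, the first claim follows.

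Suppose now that we are in the unbounded subcase $\scl(k\Q(r)) = a(r-1)$. Because $|k\Q(r-1)|$ is negligible relative to $\P(r-1)$, the greedy choice of $\lst(k\Q(r))$ is forced to be exactly $2k$; the constraint $2|\lst| \le c(a(r-1))+1 = 2d(r-1)+2$ then forces $|2k| \le d(r-1)+1$, so $|k| \le d(r-2)$. Substitution gives
\begin{align*}
\lft(k\Q(r)) &= k\Q(r) - 2k\,\P(r-1) = -k\,\Q(r-1), \\
\rht(k\Q(r)) &= -k\,\Q(r-1) + 2k\,\Q(r-1) = k\,\Q(r-1).
\end{align*}
Thus both reductions equal $\pm k\,\Q(r-1)$ with $|k| \le d(r-2) \le 2d(r-2)+3$, establishing the claimed form in the moreover. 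Now apply the same reasoning one level down, with $r$ replaced by $r-1$: each of $\scl(\pm k\Q(r-1))$ is either the next unbounded scale $a(r-2)$ or a bounded scale in $(a(r-2),a(r-1))$, giving the claimed dichotomy.

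For the second part, write $k\Q(r)+b = 2k\,\P(r-1) - k\,\Q(r-1) + b$. The hypothesis $\scl(b) = a(r-\ell)$ with $\ell \ge 2$ together with \cref{cor:obvioius_thing} gives $|b| \le \tfrac{3}{4}\p(a(r-\ell)+1) \le \tfrac{3}{4}\p(a(r-2)+1)$, again negligible against $\P(r-1)$. The assumption $\scl(k\Q(r)+b) < a(r)$ combined with the same two-sided estimates places $\scl(k\Q(r)+b)$ in $\{a(r-1)\}\cup\{a(r-1)+1,\dots,a(r)-1\}$, with only the first entry unbounded. In the unbounded case, the same greedy analysis forces $\lst = 2k$ and
\[
\lft(k\Q(r)+b) = -k\,\Q(r-1)+b, \qquad \rht(k\Q(r)+b) = k\,\Q(r-1)+b,
\]
both of the form $k'\Q(r-1)+b$ with $|k'| = |k| \le d(r-1)+1 < 2d(r-1)+2$. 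The dichotomy between $\scl(\lft),\scl(\rht) = a(r-2)$ and ``one bounded greater than $a(r-2)$'' follows by repeating the above at the next scale down: $b$ remains negligible at scale $a(r-2)$ because $\ell \ge 2$, so the leading behavior at that scale is determined by $k'\Q(r-1)$, which by the first half of the lemma has either scale $a(r-2)$ or bounded in $(a(r-2), a(r-1))$. The main obstacle throughout is the bookkeeping: verifying that at every greedy step, the ``error'' terms ($k\Q(r-1)$ and $b$) are so small that they cannot perturb the top coefficient, which reduces entirely to the rapid-growth estimates between adjacent unbounded scales.
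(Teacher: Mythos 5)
Your proof is correct and follows essentially the same route as the paper's: substitute $\Q(r) = 2\P(r-1) - \Q(r-1)$, use the rapid growth of $\p$ between adjacent unbounded scales to show that $k\Q(r-1)$ (and $b$) are negligible relative to $\P(r-1)$, deduce that the greedy algorithm sets $\lst(k\Q(r)) = 2k$, then compute $\lft$ and $\rht$ explicitly and iterate one scale down. Your explicit computations in fact give $\lft(k\Q(r)) = -k\Q(r-1)$ and $\rht(k\Q(r)) = k\Q(r-1)$, whereas the paper writes $\rht(k\Q(r)) = 3k\Q(r-1)$; the discrepancy traces to a sign slip in the paper's intermediate step and does not affect the conclusion since the scale depends only on absolute value and both versions are of the required form $k'\Q(r-1)$.
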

\begin{proof}
The first claim is because $\P(r-1)<\Q(r)< \p(a(r-1)+1)$ and $(2d(r)+2)\p(a(r-1)+1)$ is much smaller than $\p(a(r)-1)$. 
In particular, $k\Q(r)$ is at a scale between $a(r-1)$ and $a(r)-1$ and any such scale other than $a(r-1)$ is a bounded scale. 

Now we consider the reductions $\lft$ and $\rht$.
Recall, $k\Q(r)=k(2\P(r-1)-\Q(r-1))$, 
so if it is not at a bounded scale, $\scl(k\Q(r))=a(r-1)$ and by the greedy algorithm, $\lst(k\q(r))=2k$ because $(2d(r)+3)\Q(r-1)$ is much smaller than $\P(r-1)$.
If $\scl(\lft(k\Q(r)))$ is at an unbounded scale, similarly to above $\scl(\lft(k\Q(r)) = \scl(k\Q(r-1)) = a(r-2)$.
Because $\rht(k\Q(r))=2k\Q(r-1)+\lft(k\Q(r))=3k\Q(r-1)$ we have that it is either at a bounded scale or at $a(r-2)$. 
Now for the bound on $k'$ if it was not true $\lft(\scl(\Q(r)))$ would be at a bounded scale.

The second paragraph follows similarly to the above. In particular, for the ``moreover'' parts of the second paragraph, $b$ is small enough that adding $b$ does not affect the greedy algorithm.
\end{proof}

\begin{proposition}
\label{prop:reduction}
Fix $\tau \in \N$ a target scale greater than or equal to $a(10)$.
Fix $m \in \N$ with $\scl(m)$ unbounded and $\scl(m) > \tau$.
Fix $I \in \N$ with $\lambda^I(\scl(m)) \ge \tau$.
For every $1 \le i \le I$ there is a tuple
\begin{equation}
\label{eq:tuple}
\left( A_0^{(i)}, A_1^{(i)},n_1^{(i)} , \dots, A_{k(i)}^{(i)},n^{(i)}_{k(i)} \right)
\end{equation}
with all the following properties.
\begin{enumerate}[label={\textbf{\textup{E\arabic*}}.},ref={\textbf{E\arabic*}},leftmargin=5em]
\item
\label{cond:setup}
$A_j^{(i)} \subset X$ for all $0 \le j \le k(i)$ and $n^{(i)}_j \in \Z$ for all $1 \le j \le k(i)$ and $k(i) \in 2\N \cup \{0\}$.
\item
\label{cond:partition}
The sets $A^{(i)}_0,\dots,A^{(i)}_{k(i)}$ form a measurable partition of $X$ in which each $A^{(i)}_j$ is the intersection with $X$ of a finite union of cylinders, each of which is defined by fixing values at unbounded scales between $\scl(m)$ and $\lambda^{i-1}(\scl(m))$.
\item
\label{cond:agree}
Every $1 \le j \le k(i)$ has an ancestor $1 \le \alpha(j) \le k(i-1)$.
If $j \ne 0$ and $x \in A_j^{(i)}$ and
\begin{equation}
\label{eqn:bad ancestors}
x \notin \bigcup_{s=0}^i \bad \Big( \scl \Big( n^{(i-s)}_{\alpha^s(j)} \Big), n^{(i-s)}_{\alpha^s(j)} \Big)
\end{equation}
then 
\begin{equation}\label{eq:same coords}(T^m(x))(\ell) = (T^{n_j^{(i)}}(x))(\ell) \text{ for all }\ell\leq \lambda^{i-1}(\scl(m))-1.
\end{equation}
\item
\label{cond:Azero friends}
There exists $F\subset A^{(i)}_0$ with $\mu(F)>10^{-9}\mu(A^{(i)}_0) $ and $\psi:F \to X$ measure preserving and $j \ge \tau$ so that $(x,\psi(x))$ are $(m,j)$ buddies and in particular $(m,\tau)$ buddies.
\item
\label{cond:differ}
The numbers $1,\dots,k(i)$ are paired so that each $1 \le j \le k(i)$ is in exactly one pair. Moreover, whenever $j$ and $h$ are paired then at least one of $n_j^{(i)}$ and $n_h^{(i)}$ is at scale $\lambda^i(\scl(m))$.
\item
\label{cond:paired numbers}
If $j$ and $h$ are paired  with $\lambda^i(\scl(m)) = \scl(n_j^{(i)}) \ne \scl(n_h^{(i)})$ then $n_j^{(i)}=k\q(\lambda^{i-1}(\scl(m)))+n_h^{(i)}$ with $2|k| \le c(\lambda^{i-1}\scl(m)) + 1$.
\item
\label{cond:paired cyl}
If $j$ and $h$ are paired the cylinders defining $A_j^{(i)}$ and $A_h^{(i)}$ differ in at most one unbounded scale. 
In that index one of them satisfies the first hypothesis of either \ref{lem:pos red S left} or \ref{lem:neg red S left} and the other satisfies the first hypothesis of either \ref{lem:pos red S right} or \ref{lem:neg red S right} respectively with $\max\{|n_j|,|n_h|\}$ in place of $r$ therein.

\end{enumerate}
\end{proposition}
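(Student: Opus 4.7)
The plan is to induct on $i$. For the base case $i=1$, set $k(1)=2$ and partition $X$ by the digit at coordinate $\scl(m)$. Take $A^{(1)}_1$ to be the intersection with $X$ of the cylinders whose digit at $\scl(m)$ satisfies the first hypothesis of \ref{lem:pos red S left} or \ref{lem:neg red S left} (according to the sign of $m$) and set $n^{(1)}_1 = \lft(m)$; define $A^{(1)}_2$ analogously from \ref{lem:pos red S right}/\ref{lem:neg red S right} with $n^{(1)}_2 = \rht(m)$; and let $A^{(1)}_0$ be the complement. Properties \ref{cond:setup}, \ref{cond:partition} and \ref{cond:paired cyl} are immediate; \ref{cond:agree} (using $\lambda^0(\scl(m))=\scl(m)$) follows from \cref{cor:pos reduc conseq} and \cref{cor:neg reduc conseq}; the single pair $\{1,2\}$ satisfies \ref{cond:differ} by \cref{lem: reduction not both small} and \ref{cond:paired numbers} because $\rht(m)-\lft(m)=\lst(m)\q(\scl(m))$ with $2|\lst(m)|\le c(\scl(m))+1$; and \ref{cond:Azero friends} is given by \cref{lem:friends} combined with \cref{lem:friend to weak}.

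For the inductive step, process each $1\le j\le k(i)$ individually. If $\scl(n^{(i)}_j)=\lambda^i(\scl(m))$, apply the base-case trichotomy at scale $\lambda^i(\scl(m))$ with $r=n^{(i)}_j$: this partitions $A^{(i)}_j$ into two children $(A^{(i+1)}_{j_L},\lft(n^{(i)}_j))$ and $(A^{(i+1)}_{j_R},\rht(n^{(i)}_j))$ plus a leftover folded into $A^{(i+1)}_0$. If $\scl(n^{(i)}_j)<\lambda^i(\scl(m))$, reduce further when its scale is still unbounded (using the same trichotomy at $\scl(n^{(i)}_j)$) and otherwise carry the data unchanged to step $i+1$. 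Property \ref{cond:agree} at step $i+1$ follows by concatenating the step-$i$ agreement (in coordinates $\le\lambda^{i-1}(\scl(m))-1$) with the one-step agreement from \cref{cor:pos reduc conseq}/\cref{cor:neg reduc conseq} at the newer scale, noting we only need agreement in the fewer coordinates $\le\lambda^i(\scl(m))-1$. The new contribution to \ref{cond:Azero friends} comes from applying \cref{lem:friends} to each reduction-step leftover; adding these to the friends already inside $A^{(i)}_0\subset A^{(i+1)}_0$ maintains the $10^{-9}$ proportion.

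The main obstacle is maintaining the pairing properties \ref{cond:differ}, \ref{cond:paired numbers} and \ref{cond:paired cyl} through the induction. When a step-$i$ pair $(j,h)$ has both members at scale $\lambda^i(\scl(m))$, each reduces independently and the natural pairings $(j_L,j_R), (h_L,h_R)$ work directly: \ref{cond:differ} follows from \cref{lem: reduction not both small} applied to each parent separately, while \ref{cond:paired numbers} follows from $\rht-\lft=\lst\cdot\q$ and the bound on $\lst$, and \ref{cond:paired cyl} is immediate because the two siblings' defining cylinders differ only at $\lambda^i(\scl(m))$. The delicate case is when only $j$ is at scale $\lambda^i(\scl(m))$, so that by \ref{cond:paired numbers} at step $i$ we have $n^{(i)}_h=n^{(i)}_j+k\q(\lambda^{i-1}(\scl(m)))$. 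Here we invoke \cref{lem:for pairing}, whose first and second paragraphs describe precisely the possible scales of $n^{(i)}_h$ and of its further reductions $\lft(n^{(i)}_h),\rht(n^{(i)}_h)$, and establish that the $k\Q(\cdot)$ structure of the difference is preserved one step further down (the reductions are of the form $k'\Q(\cdot)$ or $k'\Q(\cdot)+b$ with controlled $k'$). Using this structural information we re-pair the descendants of $(j,h)$ so that each new pair contains at least one element at scale $\lambda^{i+1}(\scl(m))$ and any scale-different pair continues to differ by $k''\q(\lambda^i(\scl(m)))$. Parity of $k(i+1)$ is maintained because \cref{lem:for pairing} guarantees that the reductions of $n^{(i)}_h$ match the reductions of $n^{(i)}_j$ in number (either both reduce to two items with at least one at the next unbounded scale, or one stays bounded and can be matched with a child of the other).
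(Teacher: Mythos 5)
Your proposal gets the base case and the general architecture right, and you correctly identify \cref{lem:for pairing} as the key to the ``delicate case,'' but the mechanism you describe for the non\-reducible member of a pair is wrong in a way that would break several of the invariants being maintained.

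When $j$ and $h$ are paired and only $n^{(i)}_j$ is at scale $\lambda^i(\scl(m))$, you propose to ``reduce $n^{(i)}_h$ further when its scale is still unbounded (using the same trichotomy at $\scl(n^{(i)}_j)$).'' That is not what the construction does, and doing it breaks the invariants. The paper carries $n^{(i)}_h$ forward \emph{unchanged} and instead splits the set $A^{(i)}_h$ into two pieces by fixing the coordinate at scale $\lambda^i(\scl(m))$ --- the \emph{same} coordinate and the \emph{same} ranges used to split $A^{(i)}_j$ (both governed by $\lst(n^{(i)}_j)$). Both resulting children of $h$ carry the same time label $n^{(i)}_h$. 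This is exactly what keeps \ref{cond:partition} true (at step $i+1$ the new cylinder is defined at the unbounded scale $\lambda^i(\scl(m))$, which is in the allowed window; splitting at $\scl(n^{(i)}_h)<\lambda^i(\scl(m))$ is not) and what keeps \ref{cond:paired cyl} true (the cylinders defining, say, $L^{(i)}_j$ and $L^{(i)}_h$ then differ only where $A^{(i)}_j$ and $A^{(i)}_h$ already differed, not in two additional coordinates). If you instead reduce $n^{(i)}_h$ at the different scale $\scl(n^{(i)}_h)$, the paired descendants would differ in three coordinates, and the lower\-scale constraint would also violate \ref{cond:partition}. The role of \cref{lem:for pairing} is then not to compare reductions of $n^{(i)}_j$ with reductions of $n^{(i)}_h$, but to show that $\lft(n^{(i)}_j)$ and $\rht(n^{(i)}_j)$ retain the form $k'\q(\lambda^i(\scl(m)))+n^{(i)}_h$, so the re-pairing $(L^{(i)}_j,L^{(i)}_h)$, $(R^{(i)}_j,R^{(i)}_h)$ (when $\scl(n^{(i)}_h)<\lambda^{i+1}(\scl(m))$) preserves \ref{cond:paired numbers} and \ref{cond:differ} without touching $n^{(i)}_h$.

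There is also a second, independent gap: your recursion only folds the ``middle'' leftover pieces $F(j)$ into $A^{(i+1)}_0$ and produces buddies there via \cref{lem:friends}. You do not account for \emph{bad pairs}, i.e.\ those $(j,h)$ for which $n^{(i)}_j<\tfrac34 \p(\lambda^i(\scl(m)))$ or one of $\lft(n^{(i)}_j)$, $\rht(n^{(i)}_j)$ lands at a bounded scale at least $\tau$. For those, the whole of $A^{(i)}_j\cup A^{(i)}_h$ must be thrown into $A^{(i+1)}_0$ (to preserve the pairing and $k(i+1)\in 2\N\cup\{0\}$), and proving \ref{cond:Azero friends} for that contribution requires a different argument: one invokes \cref{thm:bounded wfriends} or \cref{prop:small friend} at the bounded or small scale, pushes through \cref{lem:buddies through reduc}, and crucially uses a comparability estimate of the form $\tfrac12<\mu(A^{(i)}_j)/\mu(A^{(i)}_h)<2$ (which itself needs \ref{cond:paired cyl} and \cref{lem:indep}) to transport the positive proportion of buddies from one member of the bad pair to the union. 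Applying your ``base-case trichotomy'' pointwise to a single $j$ does not supply either of these ingredients.
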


\begin{proof}
Let $\tau$, $m$ and $I$ be as in the statement.
The proof is recursive.

\textbf{Initial step:}
First suppose both $\scl(\lft(m))$ and $\scl(\rht(m))$ are unbounded scales.
By \cref{lem: reduction not both small} neither can be at scale $\scl(m)$ and at least one is at scale $\lambda(\scl(m))$.
If $m$ is positive let $A^{(1)}_1$ and $A^{(1)}_2$ be the sets of points in $X$ satisfying the hypothesis $9 \le x(\scl(m)) \le d(b)-\lst(r)$ of \ref{cor:pos red T left} and the hypothesis $d(b) + 2 \le x(\scl(m)) \le c(b) - \lst(r)$ of \ref{cor:pos red T right} respectively.
When $m$ is negative use corresponding hypotheses of \ref{cor:neg red T left} and \ref{cor:neg red T right} respectively instead.
In either case define $A^{(1)}_0$ to be the intersection with $X$ of the cylinders defined by the $\scl(m)$ coordinate that contribute neither to $A^{(1)}_1$ nor to $A^{(1)}_2$.
Thus $A^{(1)}_0,A^{(1)}_1,A^{(1)}_2$ is a measurable partition of $X$ and \ref{cond:partition} holds.
Set $k(1) = 2$ and $n^{(1)}_1 = \lft(m)$ and $n^{(1)}_2 = \rht(m)$.
Thus \ref{cond:setup} holds.
We get \ref{cond:agree} from the conclusions of \ref{cor:pos red T left} and \ref{cor:pos red T right} respectively.
(When $m$ is negative use \cref{cor:neg reduc conseq} instead.)
As $k(1) = 2$ we are forced to pair $n^{(1)}_1$ and $n^{(1)}_2$.
By \cref{lem: reduction not both small} at least one of $n^{(1)}_1$ and $n^{(2)}_2$ is at scale $\lambda(\scl(m))$ giving \ref{cond:differ}.
As $|\lft(m) - \rht(m)| = |\lst(m)| \q(\scl(m))$ and $2|\lst(m)| \le c(\scl(m)) + 1$ we have \ref{cond:paired numbers}.
Property \ref{cond:paired cyl} holds as the cylinders defining $A^{(1)}_1$ and $A^{(1)}_2$ are defined in the $\scl(m)$ coordinate alone to satisfy the appropriate hypotheses. 

Secondly, suppose instead that at least one of $\lft(m)$ and $\rht(m)$ is at a bounded scale greater than or equal to $\tau$.
In this case set $k(1) = 0$ and $A^{(1)}_0 = X$.
All properties except \ref{cond:Azero friends} are immediate or vacuous.
We now treat \ref{cond:Azero friends}, assuming without loss of generality that $\scl(\lft(m))=k>\tau$, a bounded scale. By Theorem \ref{thm:bounded wfriends} we get sufficiently many $(\lft(m),\scl(\lft(m)))$-friends. By \ref{cor:pos red T left} if $m>0$ or \ref{cor:neg red T left} if $m<0$, Lemmas \ref{lem:friend to weak} and \ref{lem:buddies through reduc} we have \ref{cond:Azero friends}.

Finally, suppose one of $\lft(m)$, $\rht(m)$ is at an unbounded scale and the other is at a bounded scale less than $\tau$.
(We will not find both $\lft(m)$ and $\rht(m)$ at bounded scales less than $\tau$ because $\lft(m) - \rht(m) = \lst(m) \q(\scl(m))$ forces at least one of $\lft(m)$ and $\rht(m)$ to be at a scale at least as large as $\lambda(\scl(m)) - 1$.)
Then we proceed as in the first case, except that for \ref{cond:differ} we instead note here that if $\lft(m)$ is not at scale $\lambda(\scl(m))$ then $\scl(\lft(m)) \le \lambda^2(\scl(m))$ and so $\scl(\rht(m)) = \lambda(\scl(m))$ because $\rht(m) = \lft(m) + \lst(m) \q(\scl(m))$ gives $\scl(\rht(m)) \ge \lambda(\scl(m))$ and it cannot be larger than $\lambda(\scl(m))$.

\textbf{The recursion}:
For the inductive step, suppose for some $1 \le i < I$ that a tuple
\[
( A^{(i)}_0, A^{(i)}_1, n^{(i)}_1,\dots, A^{(i)}_{k(i)}, n^{(i)}_{k(i)})
\]
satisfying all the properties has been produced.
If $k(i) = 0$ we set $k(i+1) = 0$ and $A^{(i+1)}_0 = A^{(i)}_0$.
All properties continue to hold.

Assuming, then, that $k(i) > 0$, we proceed by reducing further each $n^{(i)}_j$ unless one of the following holds.
Say that the pair $h \ne j$ in $\{1,\dots,k(i) \}$ is \define{bad} if at least one of the following is true.
\begin{itemize}
\item
$\scl(n_j^{(i)}) = \lambda^i(\scl(m))$ and $n_j^{(i)} < \frac{3}{4} \p(\lambda^i(\scl(m)))$
\item
$\scl(n_j^{(i)}) = \lambda^i(\scl(m))$ and $\lft(n_j^{(i)})$ is at a bounded scale not smaller than $\tau$
\item
$\scl(n_j^{(i)}) = \lambda^i(\scl(m))$ and $\rht(n_j^{(i)})$ is at a bounded scale not smaller than $\tau$
\item
$\scl(n_h^{(i)}) = \lambda^i(\scl(m))$ and $n_h^{(i)}<\frac{3}{4} \p(\lambda^i(\scl(m)))$
\item
$\scl(n_h^{(i)}) = \lambda^i(\scl(m))$ and $\lft(n_h^{(i)})$ is at a bounded scale not smaller than $\tau$
\item
$\scl(n_h^{(i)}) = \lambda^i(\scl(m))$ and $\rht(n_h^{(i)})$ is at a bounded scale not smaller than $\tau$
\end{itemize}
When the pair to which $1 \le j \le k(i)$ belongs is not bad we call $j$ \define{reducible} if $\scl(n_j^{(i)}) = \lambda^{i}(\scl(m))$.
Note that \ref{cond:differ} says that if neither element of a pair is bad then at least one element of each pair is reducible.
As a reducible $j$ does not belong to a bad pair it must be the case that $\lft(n^{(i)}_j)$ and $\rht(n^{(i)}_j)$ are at unbounded scales.
For each $1 \le j \le k(i)$ that is reducible set
\[
F(j)
=
\begin{cases}
\{ x \in A^{(i)}_j : x(\scl(n^{(i)}_j)) \in [0,8] \cup [d-e+1, d+1] \cup [c-e+1, c] \}
&
\text{if }
n^{(i)}_j>0
\\
\{ x \in A^{(i)}_j : x(\scl(n^{(i)}_j)) \in [0,9-e] \cup [d,d+1-e] \cup [c-1,c] \}
&
\text{if }
n^{(i)}_j < 0
\end{cases}
\]
where $c = c(\lambda^i(\scl(m)))$ and $d = d(\lambda^i(\scl(m)))$ and $e = \lst(n^{(i)}_j)$.
For each $1 \le h \le k(i)$ that is neither in a bad pair nor reducible set
\[
F(h)
=
\begin{cases}
\{ x \in A^{(i)}_h : x(\scl(n^{(i)}_j)) \in [0,8] \cup [d-e+1, d+1] \cup [c-e+1, c] \}
&
\text{if }
n^{(i)}_k > 0
\\
\{ x \in A^{(i)}_h : x(\scl(n^{(i)}_j)) \in  [0,9-e] \cup [d,d+1-e] \cup [c-1,c] \}
&
\text{if }
n^{(i)}_k < 0
\end{cases}
\]
where $c = c(\lambda^i(\scl(m)))$ and $d = d(\lambda^i(\scl(m)))$ and $e = \lst(n^{(i)}_k)$ and $j$ is the (necessarily reducible) pair of $h$ as in \ref{cond:paired cyl}.

Note all $1 \le h \le k(i)$ that are neither bad nor reducible are paired with an index that is reducible and not bad. We define
\begin{equation}
\label{eqn:A0 recursive def}
A_0^{(i+1)}
=
A_0^{(i)}
\cup
\bigcup_{j \text{ in a bad pair}} A_j^{(i)}
\cup
\bigcup_{j \text{ reducible }} F(j)
\cup
\bigcup_{h \text{ neither bad nor reducible}} F(h)
\end{equation}
and note that the sets making up \eqref{eqn:A0 recursive def} are pairwise disjoint as $A^{(i)}_0,\dots,A^{(i)}_{k(i)}$ is a partition of $X$.
We now verify \ref{cond:Azero friends} for $A^{(i+1)}_0$ by checking \ref{cond:Azero friends} holds for each of its constituents.
\begin{itemize}
\item
It follow by the inductive assumption for $A_0^{(i)}$.
\item
Fix a bad pair $h \ne j$.
We first check that
\begin{equation}
\label{eqn:bad pair sets comparable}
\dfrac{1}{2} < \dfrac{\mu(A^{(i)}_j)}{\mu(A^{(i)}_k)} < 2
\end{equation}
holds.
By \ref{cond:partition} and \ref{cond:paired cyl} there exist 3 unions of cylinders, $B,C,D$ all defined only by fixing indices at unbounded scales greater than $\tau$ so that $A_j^{(i)}=B \cap C$ and $A_k^{(i)}=B \cap D$.
What is more, by \ref{cond:paired cyl} and our construction, up to flipping the role of $C$ and $D$, $C$ is defined by all of the left towers at one unbounded index, say $a(n)$ and $D$ is defined by all of the right towers $a(n)$. Moreover, this index is smaller than the indices defining $B$.
Now by a similar computation to those we have done before
\[
1 < \frac{(d(n)+1)\P(n)-\nu(X)^{-1}\nu(\bigcup_{\ell>n}W_{a(\ell)})}{(d(n)+1)(\P(n)-\Q(n))}\leq \frac{\mu(C)}{\mu(D)}<1+2\frac{\Q(n)}{\P(n)}
\]
and \eqref{eqn:bad pair sets comparable} follows from \cref{lem:indep} and the fact that the index defining $C$ and $D$ is at least $2^{2^5}$ smaller than the smallest index defining $B$.

With \eqref{eqn:bad pair sets comparable} established it suffices to find a positive proportion of $(m,\tau)$ buddies in either $A^{(i)}_j$ or $A^{(i)}_k$.
How this is done depends on why the pair $h,j$ is bad.
\begin{itemize}
\item 
If $\scl(n^{(i)}_j) = \lambda^i(\scl(m))$ and $s \in \{ \lft(n^{(i)}_j), \rht(n^{(i)}_j) \}$ at a bounded scale not smaller than $\tau$ apply \cref{thm:bounded wfriends} with $s$ in place of $r$ and note that the set $B$ therein is defined by coordinates no larger than $\scl(s) + 1$ while $A^{(i)}_j$ is defined by coordinates no smaller than $\lambda^{i-1}(\scl(m))$.
We may therefore apply \cref{lem:indep} to get
\[
\mu(B \cap A^{(i)}_j) \ge \left( 1 - \dfrac{1}{2^{2^5}} \right) \mu(B) \mu(A^{(i)}_j)
\]
whence a positive proportion of $A^{(i)}_j$ intersects $B$.
Removing $\bad(\scl(s),8s)$ and \eqref{eqn:bad ancestors} leaves us with a positive proportion of $(m,\tau)$ buddies, as in the proof of \cref{cor:bounded mmix}.
\item
If $\scl(n^{(i)}_j) = \lambda^i(\scl(m))$ and $n^{(i)}_j < \tfrac{3}{4} \p(\lambda^i(\scl(m)))$ apply \cref{prop:small friend}.
\item
If instead $\scl(n^{(i)}_h) = \lambda^i(\scl(m))$ the arguments are exactly the same upon replacing $j$ with $h$.
\end{itemize}
\item
When $j$ is reducible or neither bad nor reducible, we must show that $F(j)$ has a positive proportion of $(m,j)$-buddies for $j\geq \tau$.
\begin{itemize}
\item
We claim that it suffices to show this when $j$ is reducible, because the measure of the paired cylinders differs by less than a factor of 2 and when $h$ is neither bad nor reducible it is paired with $j$ which is reducible. Indeed, by \ref{cond:paired cyl} they differ by at most one unbounded symbol. We consider the other (necessarily unbounded) entries defining these sets \ref{cond:partition} and apply Lemma \ref{lem:indep} to these cylinders and the cylinders defined by the (necessarily unbounded) position where they are defined to differ. 
\item
By Lemma \ref{lem:friends}, if $j$ is reducible the set of $(n_j^{(i)},\lambda^{i}(\scl(m)))$-friends is at least $\frac{1}{10^6}\mu(F(j))$.
\item
By Lemmas \ref{lem:friend to weak} and \ref{lem:buddies through reduc} pairs that are $(n_j^{(i)},\lambda^{i}(\scl(m)))$-friends and for which \eqref{eq:same coords} hold for both elements of the pair are automatically $(m,\tau)$-buddies.
\end{itemize}
\end{itemize}

Now we define $k(i+1)$ and the sets $A^{(i+1)}_1,\dots,A^{(i+1)}_{k(i+1)}$ and the times $n^{(i+1)}_1,\dots,n^{(i+1)}_{k(i+1)}$.

Write
\[
G =  \{ 1 \le j \le k(i) : j \textup{ reducible} \} \cup  \{ 1 \le h \le k(i) : h \textup{ neither bad nor reducible} \}
\]
which consists of those $1 \le j \le k(i)$ belonging to a pair that is not bad.
Temporarily write $c = c(\lambda^i(\scl(m)))$ and $d = d(\lambda^i(\scl(m)))$.
There are two cases to consider.
\begin{itemize}
\item
When $j \in G$ is reducible and its pair is too, write $e = \lst(n^{(i)}_j)$ and define the sets
\begin{align*}
L^{(i)}_j &= \{ x \in A^{(i)}_j : x(\scl(n^{(i)}_j)) \in [9,d-e] \} \\
R^{(i)}_j &= \{ x \in A^{(i)}_j : x(\scl(n^{(i)}_j)) \in [d+2,c-e] \}
\end{align*}
if $n^{(i)}_j > 0$ and
\begin{align*}
L^{(i)}_j &= \{ x \in A^{(i)}_j : x(\scl(n^{(i)}_j)) \in [10-e,d-1] \} \\
R^{(i)}_j &= \{ x \in A^{(i)}_j : x(\scl(n^{(i)}_j)) \in [d+2-e,c-2] \}
\end{align*}
if $n^{(i)}_j < 0$.
In this case form the pairs $(L^{(i)}_j, \lft(n^{(i)}_j))$ and $(R^{(i)}_j, \rht(n^{(i)}_j))$.
\item
For neither bad nor reducible $h \in G$ its paired index $j \in G$ is necessarily reducible by \ref{cond:differ}. Again write $e = \lst(n^{(i)}_j)$ and in this case define
\begin{align*}
L^{(i)}_h &= \{ x \in A^{(i)}_h : x(\scl(n^{(i)}_j)) \in [9,d-e] \} \\
R^{(i)}_h &= \{ x \in A^{(i)}_h : x(\scl(n^{(i)}_j)) \in [d+2,c-e] \}
\end{align*}
if $n^{(i)}_j > 0$ and
\begin{align*}
L^{(i)}_h &= \{ x \in A^{(i)}_h : x(\scl(n^{(i)}_j)) \in [10-e,d-1] \} \\
R^{(i)}_h &= \{ x \in A^{(i)}_h : x(\scl(n^{(i)}_j)) \in [d+2-e,c-2] \}
\end{align*}
if $n^{(i)}_j < 0$.
In this case form the pairs $(L^{(i)}_h, n^{(i)}_h)$ and $(R^{(i)}_h, n^{(i)}_h)$.
\end{itemize}

Take $k(i+1) = 2|G|$.
Let $(A^{(i+1)}_1, n^{(i+1)}_1),\dots,(A^{(i+1)}_{k(i+1)}, n^{(i+1)}_{k(i+1)})$ be an enumeration of the $k(i+1)$ pairs that result from the above two cases.
For each $1 \le j \le k(i+1)$ let $1 \le \alpha(j) \le k(i)$ be the index for which $n^{(i)}_{\alpha(j)}$ determines $n^{(i+1)}_j$ as the above pairs are formed.
For each $g \in G$ the sets $F(g)$, $L^{(i)}_g$, $R^{(i)}_g$ partition $A^{(i)}_g$.
It is immediate from the recursion and the definitions that \ref{cond:setup} and \ref{cond:partition} are satisfied. We have already verified \ref{cond:Azero friends}.

We next verify \ref{cond:agree}.
Fix $1 \le j \le k(i+1)$.
If its ancestor $1 \le \alpha(j) \le k(i)$ is such that $n^{(i+1)}_j = n^{(i)}_j$ then there is nothing to check.
Suppose instead that its ancestor is a value $1 \le \alpha(j) \le k(i)$ for which $n^{(i+1)}_j = \lft(n^{(i)}_{\alpha(j)})$ or $n^{(i+1)}_j = \rht(n^{(i)}_{\alpha(j)})$.
Fix $x \in A^{(i+1)}_j$ with $j \ne 0$.
Write $n = n_{\alpha(j)}^{(i)}$ and let $r \in \{\lft(n),\rht(n)\}$ be the reduction we are considering. By either \cref{cor:pos reduc conseq} or \cref{cor:neg reduc conseq} we have that $(T^n x)(\ell) = (T^r  x)(\ell)$ for $\ell \leq\scl(n)-1$, so long as $x \notin \bad(\scl(n),n)$. 
Invoking \ref{cond:agree} gives $(T^n x)(\ell) = (T^m x)(\ell)$ for $\ell\leq \lambda^{i-1}\scl(m)-1$ so long as \eqref{eqn:bad ancestors} holds. Combining the two verifies \ref{cond:agree}.

It remains to define how $1,\dots,k(i+1)$ are paired and verify \ref{cond:differ}, \ref{cond:paired numbers}, \ref{cond:paired cyl}.
There are two cases to consider.
\begin{itemize}
\item
When $j,h \in G$ are paired and both are reducible, pair the descendants of $j$ and pair the descendants of $h$.
\item 
When $j,h \in G$ are paired and neither is bad and only $j$ is reducible then necessarily $h$ is neither bad nor reducible. Thus $\scl(n^{(i)}_h) < \lambda^i(\scl(m))$ and since $h$ is not bad, $J(n^{(i)}_h)\leq \lambda^{i+1}(\scl(m))$. There are two sub-cases to consider.
\begin{enumerate} 
\item
If $\scl(n_h^{(i)}) = \lambda^{i+1}(\scl(m))$ then pair the descendants of $j$ and pair the descendants of $h$.
\item
If $\scl(n_h^{(i)}) < \lambda^{i+1}(\scl(m))$ we form the pairing so that $L^{(i)}_j$ is with $L^{(i)}_h$ and $R^{(i)}_j$ is with $R^{(i)}_h$.
\end{enumerate}
\end{itemize}

First we verify \ref{cond:paired cyl}. When the descendants $h'$, $h''$ of some $1 \le h \le k(i)$ are paired we have
\[
\{ A^{(i+1)}_{h'}, A^{(i+1)}_{h''} \} = \{ L^{(i)}_h, R^{(i)}_h \}
\]
and therefore the sets $A^{(i+1)}_{h'}$, $A^{(i+1)}_{h''}$ disagree only in the $\scl(n^{(i)}_{j'})$ coordinate, where $j' = h$ when $h$ is reducible and $j'$ is the pair of $h$ when $h$ is not reducible. In either case $\scl(n^{(i)}_{j'})$ is unbounded.
It remains to verify that the pairing satisfies \ref{cond:paired cyl} when we are in the second subcase. In that case the additional restriction defining $L^{(i)}_j$ and $L^{(i)}_h$ are the same because both are determined by $n^{(i)}_j$ so $L^{(i)}_j$ and $L^{(i)}_h$ will only disagree in the coordinate where $A^{(i)}_j$ and $A^{(i)}_h$ disagreed. The same is true for $R^{(i)}_j$ and $R^{(i)}_h$.

To conclude we check \ref{cond:differ} and \ref{cond:paired numbers} simultaneously.
Fix again $1 \le j,h \le k(i)$ paired.
The construction is such that each descendent is paired with exactly one other descendant.
Now we check at least one descendant in each pair has its associated time at scale $\lambda^{i+1}(m)$.
If both $n_j^{(i)}$ and $n_h^{(i)}$ are reducible we can apply \cref{lem: reduction not both small} to both separately. Note that in this case \eqref{eq:differnce in reduction} gives \ref{cond:paired numbers}. This is also the justification of Subcase 1. 
We are left with Subcase 2 where $n_j^{(i)}$ is reducible and $n_h^{(i)}$ is neither bad nor reducible.
We are done if $\scl(n_h^{(i)}) = \lambda^{i+1}(\scl(m))$, so we assume $\scl(n_h^{(i)}) \le \lambda^{i+2}(\scl(m))$.
By \ref{cond:paired numbers}
\[
n_j^{(i)}=k \q(\lambda^{i-1}\scl(m))+n_h^{(i)} \text{ with }0<|k|\leq 2d(\lambda^{i})+3
\]
and -- applying \cref{lem:for pairing} with $b = n^{(i)}_h$ and using that $n^{(i)}_j$ is not bad -- we have that both $\lft(n_j^{(i)})$ and $\rht(n_j^{(i)})$ have the form
\[
k'_L \q(\lambda^i \scl(m)) + n_h^{(i)} \text{ and } k'_R \q(\lambda^i \scl(m)) + n_h^{(i)}
\]
with $0<|k'_L|,|k'_R|\leq c(\lambda^i(J(m))) + 1$. 
So both pairs created from $n_h^{(i)}$ and $n_j^{(i)}$ satisfy \ref{cond:differ}, with the terms $\lft(n_j^{(i)})$ and $\rht(n_j^{(i)})$ satisfying 
\[
\scl(\lft(n_j^{(i)})) = \lambda^{i+1}(\scl(m)) = \scl(\rht(n_j^{(i)}))
\]
as desired.
Similarly, we have both pairs satisfy \ref{cond:paired numbers}.
\end{proof}

\begin{proposition}
\label{prop:reduction multiples}
Fix $\tau=a(\beta) \ge a(10)$ a target unbounded scale and $m \in \N$ with $\scl(m)=a(b)$ unbounded and $b>\beta$.
Let $I \in \N$ and the tuples $(A^{(i)}_0, A^{(i)}_1, n^{(i)}_1,\dots, A^{(i)}_{k(i)}, n^{(i)}_{k(i)})$ for $1 \le i \le I=b-\beta$ result from an application of \cref{prop:reduction} to $m$.
In particular, $\scl(n_j^{(i)}) \leq \tau$ and for at least half of the $j$ we have $\scl(n_j^{(I)})=\tau$.
If $\mu(A^{(I)}_0)<\frac 1 {10^{50}}$  and tuples $(\overline{A}^{(i)}_0, \overline{A}^{(i)}_1, \overline{n}^{(i)}_1,\dots, \overline{A}^{(i)}_{\overline{k}(i)}, \overline{n}^{(i)}_{\overline{k}(i)})$ for $1 \le i \le K = b-\beta$ result from applying \cref{prop:reduction} to $6d(\beta) m$ then $\mu(\overline{A}^{(K)}_0)>\frac 1 {10^{50}}$.
\end{proposition}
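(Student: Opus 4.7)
The plan is to run the two reduction procedures in parallel and exploit the fact that multiplication by $6d(\beta)$ converts an unbounded-scale time at $\tau$ into a bounded-scale time strictly between $\tau$ and $a(\beta+1)$, which in turn forces many pairs of the scaled procedure to be \emph{bad} in the sense of \cref{prop:reduction}.

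First I would establish a bookkeeping correspondence between the reductions applied to $m$ and to $6d(\beta)m$. Multiplying $m$ by $6d(\beta)$ may raise the scale to a bounded scale between $a(b)$ and $a(b+1)$, but no unbounded scale beyond $a(b)$ is involved since $6d(\beta)$ is negligible compared with $d(b)$. The first steps of the reduction of $6d(\beta)m$ strip off those extra bounded-scale digits before reaching scale $a(b)$, after which each step can be matched (via the linearity $\lft(r)=r-\lst(r)\p(\scl(r))$ and the identity $\rht(r)=\lft(r)+\lst(r)\q(\scl(r))$) with a step of the $m$-procedure: inductively $\overline{n}^{(i)}_j$ agrees with $6d(\beta)\,n^{(i')}_{\sigma(j)}$ up to a perturbation supported at bounded scales, and $\overline{A}^{(i)}_j$ is the analogous cylinder intersection (recall \ref{cond:partition}). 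This uses \cref{lem:for pairing} to control the scale arithmetic after reduction.

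Second, I would identify why the scaled procedure must produce a large $\overline{A}_0^{(K)}$. By hypothesis, for at least half of the indices $j$ at step $I$ we have $\scl(n^{(I)}_j)=\tau=a(\beta)$, so $6d(\beta)\,n^{(I)}_j$ sits at a bounded scale in $(\tau,a(\beta+1))$. Along the corresponding branch of the scaled procedure, this is exactly the situation covered by the bad-pair clauses in the recursive step of \cref{prop:reduction}: both $\lft$ and $\rht$ of such a time land at bounded scales $\ge\tau$ (or the reduced magnitude is a fraction of the relevant $\p$), so the pair is bad and its associated sets are poured into $\overline{A}_0^{(K)}$ by \eqref{eqn:A0 recursive def}. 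Meanwhile $\mu(A_0^{(I)})<10^{-50}$ forces at least $1-10^{-50}$ of $X$ into the paired sets $\bigcup_{j\ne 0}A^{(I)}_j$, and \cref{lem:indep} together with \ref{cond:paired numbers} and \ref{cond:paired cyl} show that the measure of a paired set in the scaled procedure is comparable to the measure of its counterpart for $m$. Since at least half of the paired indices fall into the bad-pair regime under scaling, a positive constant proportion of $X$ lands in $\overline{A}_0^{(K)}$, and tracking the constants carefully yields the desired $\mu(\overline{A}_0^{(K)})>10^{-50}$.

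The main obstacle will be making the inductive correspondence genuinely precise: the greedy expansion of $6d(\beta)m$ does not commute with multiplication by $6d(\beta)$, so the ancestor trees in the two procedures are not literally isomorphic, and one must check that each branch of the scaled procedure either matches a branch of the $m$-procedure at the last step or, when they diverge, the divergence is always in the direction that produces a bad pair. Additional care is needed for the cases in \cref{prop:reduction} (bad, reducible, non-bad-non-reducible), and for the edge case where $\scl(6d(\beta)m)$ is a bounded scale strictly larger than $\scl(m)$ which shifts the entire reduction tree by a controlled amount of bookkeeping. The constants must be chosen conservatively enough (hence the relatively crude threshold $10^{-50}$) to absorb the losses arising from \cref{lem:indep} and from the small measure removed by passing to $X$ at each scale.
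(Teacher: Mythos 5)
Your proposal follows essentially the same strategy as the paper: run the reduction procedures for $m$ and $6d(\beta)m$ in parallel, observe that multiplying a time at the target scale $\tau$ by $6d(\beta)$ pushes it to a bounded scale (hence into a bad pair), and conclude that nearly all of the measure of $X$ must be deposited into $\overline{A}_0^{(K)}$. The ``bookkeeping correspondence'' you describe is isolated in the paper as \cref{lem:reduc multiply}, which shows that $\lft(km)=k\lft(m)$ and $\rht(km)=k\rht(m)$ hold \emph{exactly} unless the reduction is bad (at a bounded scale, or small in absolute value at scale $\scl(m)$), so the two ancestor trees actually agree branch-by-branch until a bad pair occurs; this exact multiplicative agreement is cleaner than the ``up to a perturbation supported at bounded scales'' framing in your sketch, and is what makes the measure accounting go through without comparing cylinders across the two procedures.
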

This follows from repeated applications of the following result.

\begin{lemma}
\label{lem:reduc multiply}
Fix $m \in \Z$ with $\scl(m) = a(b)$ an unbounded scale.

If $\rht(m)$ is at an unbounded scale and $k \in \Z$ satisfies $|k|\leq 6d(b)$ and $km$ is at an unbounded scale then one of the following is true.
\begin{itemize} 
\item
$\rht(km)=k\rht(m)$
\item 
{$\rht(km)$ is at a bounded scale} 
\item
At least one of $\rht(m)$, $\rht(km)$ is both at scale $\scl(m)$ and less than $\frac{3}{4} \p(\scl(m))$
\end{itemize}
Moreover, if $\rht(km)$ is at scale $\lambda(\scl(m))$ then $|k| < 6d(a^{-1}(\scl(\rht(m))))$.

If $\lft(m)$ is at an unbounded scale and $k \in \Z$ satisfies $|k|\leq 6d(b)$ and $km$ is at an unbounded scale then one of the following is true.
\begin{itemize} 
\item
$\lft(km) = k\lft(m)$
\item 
{$\lft(km)$ is at a bounded scale }

\end{itemize}
Moreover, if $\lft(km)$ is at scale $\lambda(\scl(m))$ then $|k| < 6d(a^{-1}(\scl(\lft(m)))$.
\end{lemma}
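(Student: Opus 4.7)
Set $\sigma = \scl(m) = a(b)$ and compare the identity $km = k\lst(m)\,\p(\sigma) + k\lft(m)$ to the greedy tower expansion of $km$ at scale $\sigma$. The goal in each half of the lemma is to show that the first bullet holds under the hypotheses; the other bullets then hold a fortiori. The first step is to pin down $\scl(km) = \sigma$: the lower bound $\scl(km) \ge \sigma$ is immediate from $|m| > \p(\sigma)/2$, while \cref{cor:obvioius_thing} gives $|km| \le 6 d(b) \cdot \tfrac{3}{4}\p(\sigma+1)$, and since $a(b+1) = 2 a(b)$ the scales strictly between $\sigma$ and $a(b+1)$ are all bounded and each contributes a factor of roughly $10$ to $\p$. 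This makes $\p(a(b+1))/\p(\sigma+1)$ easily dominate $d(b)$, so $|km| < \p(a(b+1))/2$. Combined with the hypothesis that $\scl(km)$ is unbounded we conclude $\scl(km) = \sigma$.

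Next I would use the unbounded-scale hypothesis on $\rht(m)$ (respectively $\lft(m)$) to bound the low-order term $|k\lft(m)|$ (respectively $|k\rht(m)|$) very tightly. For the $\rht$ case, $\lft(m) = \rht(m) - \lst(m)\q(\sigma)$, and since $\scl(\rht(m)) \le \lambda(\sigma) = a(b-1)$ we get $|\rht(m)| \le \tfrac{3}{4}\p(a(b-1)+1)$ from \cref{cor:obvioius_thing} and $|\lst(m)\q(\sigma)| \le \p(\sigma-1)/2$ from \cref{lem:lot_red_bound}. The exponential growth of $\p$ across the many bounded scales between $a(b-1)$ and $a(b)$ makes both terms overwhelmingly smaller than $\p(\sigma)/(24 d(b))$, so $|k\lft(m)| \ll \p(\sigma)/4$; the $\lft$ case is symmetric. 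With this estimate in hand the greedy expansion of $km$ at scale $\sigma$ depends only on $|k\lst(m)|$: if $|k\lst(m)| \le (c(\sigma)+1)/2 = d(b)+1$, then the best allowable leading coefficient is $\lst(km) = k\lst(m)$ with remainder $\lft(km) = k\lft(m)$, so $\rht(km) = k\lft(m) + k\lst(m)\q(\sigma) = k\rht(m)$ (respectively $\lft(km) = k\lft(m)$), the first bullet. If $|k\lst(m)| > d(b)+1$, the best allowable coefficient $\pm(d(b)+1)$ at scale $\sigma$ leaves a remainder of magnitude at least $\p(\sigma) - |k\lft(m)| > \tfrac{3}{4}\p(\sigma)$, forcing $\scl(km) > \sigma$; but by the first step such a scale must be bounded, contradicting the hypothesis that $\scl(km)$ is unbounded, so this case does not occur.

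For the ``moreover'' clauses, the identity $\rht(km) = k\rht(m)$ (respectively $\lft(km) = k\lft(m)$) reduces the assumption $\scl(\rht(km)) = \lambda(\sigma)$ to the requirement that $|k\rht(m)|$ lies in the scale-$\lambda(\sigma)$ range $(\p(\lambda(\sigma))/2,\, \tfrac{3}{4}\p(\lambda(\sigma)+1)]$, which together with the lower bound $|\rht(m)| \ge \p(a(c))/2$ at scale $\scl(\rht(m)) = a(c)$ yields $|k| < 6 d(c) = 6 d(a^{-1}(\scl(\rht(m))))$ after a short calculation; the $\lft$ case is identical. The main obstacle will be the carrying case in the third step, which is where the tight estimate on $|k\lft(m)|$ from step two is essential: without it a ``carried'' leading coefficient could conceivably produce a remainder compatible with $\scl(km) = \sigma$ and yield a different formula for $\rht(km)$, but the extreme smallness guaranteed by the unbounded-scale hypothesis completely rules that possibility out.
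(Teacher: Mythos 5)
Your argument for the $\lft$ half of the lemma is essentially sound, because \cref{lem:reduction LR} guarantees $\scl(\lft(m)) < \scl(m)$ unconditionally, so the hypothesis that $\lft(m)$ is at an unbounded scale really does force $\scl(\lft(m)) \le \lambda(\scl(m))$, making $|\lft(m)|$ tiny relative to $\p(\scl(m))$ and your three-step argument goes through. The $\rht$ half, however, has a genuine gap. In step two you write ``since $\scl(\rht(m)) \le \lambda(\sigma) = a(b-1)$,'' but the hypothesis only says $\rht(m)$ is at \emph{some} unbounded scale, and \cref{lem:reduction LR} does \emph{not} give $\scl(\rht(m)) < \scl(m)$ unconditionally --- it gives only the disjunction $\scl(\rht(m)) < \scl(m)$ \emph{or} $|\rht(m)| < \tfrac34\p(\scl(m))$. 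It is entirely possible that $\scl(\rht(m)) = \scl(m)$. For instance, take $\lst(m) = 1$ and $\lft(m) = (\p(\sigma)-1)/2$; then $\rht(m) = (\p(\sigma)-1)/2 + \q(\sigma) > \p(\sigma)/2$ is at scale $\sigma = \scl(m)$ (with leading coefficient $1$), while still being at an unbounded scale and satisfying $|\rht(m)| < \tfrac34\p(\sigma)$. In that case $|\lft(m)|$ is \emph{not} tiny (it is roughly $\p(\sigma)/2$), so $|k\lft(m)|$ can exceed $\p(\sigma)/2$ even for $|k| = 2$, the greedy expansion of $km$ carries, and one checks directly that $\rht(km) \ne k\rht(m)$ while $\rht(km)$ can land at an unbounded scale. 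Your claim that ``the goal is to show the first bullet holds; the other bullets then hold a fortiori'' is therefore wrong: the third bullet is genuinely needed, and it is precisely what covers the case $\scl(\rht(m)) = \scl(m)$ that your argument silently excludes. The fix is to split on \cref{lem:reduction LR}: if $\scl(\rht(m)) = \scl(m)$, then $|\rht(m)| < \tfrac34\p(\scl(m))$ and the third bullet holds immediately; otherwise $\scl(\rht(m)) \le \lambda(\scl(m))$ and your step-two estimate applies, but you must make the same dichotomy for $\rht(km)$ as well (if $\rht(km)$ is at a bounded scale you are in bullet two; if it is at scale $\scl(m)$ you are again in bullet three; only in the remaining case does the smallness argument close the gap to bullet one). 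The same omission propagates to your treatment of the ``moreover'' clause, which you justify by invoking $\rht(km) = k\rht(m)$ without having established that identity in the excluded case.
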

\begin{proof}[Proof of Proposition \ref{prop:reduction multiples} assuming Lemma \ref{lem:reduc multiply}]
Consider any sequence $m, n^{(1)},n^{(2)},\dots,n^{(I)}$ of reductions of $m$.
The corresponding sequence for $km$ is $km,kn^{(1)},kn^{(2)},\dots$ until one of $kn^{(i)}$ is in a bad pair.
This follows by \cref{lem:reduc multiply} and the definition of bad pairs in the proof of \cref{prop:reduction}.
Now if $kn^{(i)}$ is bad the corresponding points are put in $A_0$ and we have proportional friends.
On the other hand $6d(\beta) n^{(I)} > \p(\tau)$ so $A^{(I-1)}$ is put in $A_0^{(I)}$ if it is not put in earlier.
\end{proof}

\begin{proof}[Proof of Lemma \ref{lem:reduc multiply}]
We present the proof in the case of $\rht$. The case of $\lft$ is similar.
First, if $km$ is at an unbounded scale, then $\scl(km) = \scl(m)$ because the bound on $k$ gives $\scl(km) \le \scl(m) + 1$.
If $\rht(km) \neq k\rht(m)$ then
\begin{align*}
& \Big\vert\rht(km)-k\rht(m)\Big\vert \\
= & \left\vert km- \lst(km) \Big(\p (\scl (m))-\q( \scl(m))\Big)-k\Big( m - \lst(m) \big(\p(\scl(m))-q(\scl(m))\big)\Big) \right\vert \\
\ge & \p(\scl(m))-\q(\scl(m))
\end{align*}
and by \cref{lem:reduction LR} they are both at most $\frac 3 4 \p(J(m))$.
Lastly, if $\scl(m)=a(\gamma))$ and $6d(\gamma+1)\geq|k|\geq 6d(\gamma)$ then $\scl(km)$ is strictly between $a(\gamma)$ and $a(\gamma+1)$. Thus it is at a bounded scale.
\end{proof}

\subsection{Proof of \cref{thm:mild mixing}}
\label{subsec:mild mixing}

We are ready to conclude that $(X,\mu,T)$ is mild mixing.

\begin{proof}[Proof of \cref{thm:mild mixing}]
 
Fix a non-constant function $f$ in $\ltwo(X,\mu)$ and a sequence $\textit{\textbf{r}}$ in $\Z$ with $|r(n)| \to \infty$.
Set $\rho=10^{-50}\cdot 10^{-9}$.
Let $\delta$ and $\tau'$ be as in \cref{cor:end} for our choices of $f$ and $\rho$.
Fix $\tau = a(\beta) > \tau'$ an unbounded scale.
It suffices to show that at least one of $\textbf{\textit{r}}$ or $6 d(\beta) \textbf{\textit{r}}$ is not a rigidity sequence for $f$.
Indeed if $\textit{\textbf{r}}$ is a rigidity sequence for $f$ then so is $k \textit{\textbf{r}}$ for any fixed $k \in \mathbb{Z}\setminus \{0\}$.
By \cref{thm:bounded wfriends} and by passing to a subsequence if necessary  we may first assume for all $n \in \N$ that $\scl(r(n))$ and $\scl(6d(\beta) r(n))$ are unbounded scales  and that $\scl(r(n)) \ge \tau$.

Fix $I(n)$ and $K(n)$ maximal with $\lambda^{I(n)}(r(n)) \ge \tau$ and $\lambda^{K(n)}(6d(\beta) r(n)) \ge \tau$ respectively.
Apply \cref{prop:reduction} to $r(n)$ and $6 d(\beta) r(n)$.
Let $\Gamma_n \subset A^{(I(n))}_0$ and $\overline{\Gamma}_n \subset \overline{A}^{(K(n))}_0$ be the respective sets furnished by \ref{cond:Azero friends}.
There are measure preserving maps $\phi_n : \Gamma_n \to X$ and $\overline{\phi}_n : \overline{\Gamma}_n \to X$ with the following properties.
\begin{itemize}
\item
For each $z \in \Gamma_n$ we have that $(z,\phi_n(z))$ are $(r(n),\tau)$ buddies.
\item
For each $z \in \overline{\Gamma}_n$ we have that $(z,\overline{\phi}_n(z))$ are $(6d(\beta)r(n),\tau)$ buddies.
\item
$\mu(\Gamma_n) \ge 10^{-9}\mu(A^{(I(n))}_0)$

\item
$\mu(\overline{\Gamma}_n) \ge 10^{-9}\mu(\overline{A}^{(K(n))}_0)$

\end{itemize} 
By \cref{prop:reduction multiples} 
\[
\max\{\mu(A^{(I(n))}_0), \mu(\overline{A}^{(K(n))}_0) \} \geq 10^{-50}
\]
and thus \cref{cor:end} gives
\[
\max \left\{ \int |f-f\circ T^{r(n)}| \intd \mu,\int |f-f \circ T^{6d(\beta)r(n)}| \intd\mu \right\} > \delta
\]
proving the theorem.

\end{proof}

\appendix
\renewcommand{\thesection}{Appendix \Alph{section}}

\section{Changes for the proof of Proposition \ref{prop:ce}}
\label{app:ce_proof}

In this section we prove the version of \cite[Corollary 3.3]{MR4269425} that is needed to prove \cref{prop:ce}.

The next lemma and its proof are unchanged from \cite[Lemma 3.2]{MR4269425}.
\begin{lemma}
\label{lemma:to:bary}
Given $c>0$ and $d \in \mathbb{N}$ there exists $\rho<1$, $C$ so that
if $0 < \delta_i <1/2$ and $a_i, b_i$ are such that $a_i,b_i>c$ and $1\geq
a_i+b_i>1-\delta_i$ and also $0 \le \zeta_i^{(\ell)}\le 1$ are
sequences of real numbers for each $\ell \in \{1,...,d\}$ satisfying
\begin{equation}
\label{eq:gammas:assumption}
|\zeta_i^{(\ell)}-(a_i\zeta_{i-1}^{(\ell-1)}+b_i\zeta_{i-1}^{(\ell)})|<\delta_{i-1}
\end{equation}
then 
$$\left|\zeta_i^{(s)}-\frac 1 d \sum_{\ell=1}^d\zeta_k^{(\ell)}\right|\leq
C\sum_{j=k}^{i-1}\left(\delta_i+\frac{\delta_i}{1-\delta_i}\right)+C\rho^{i-k}$$ for
all $k \ge 0$, $i>k$ and $s \in \{1,...,d\}$. 
\end{lemma}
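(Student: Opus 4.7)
The plan is to treat the recursion $\zeta_i^{(\ell)} \approx a_i \zeta_{i-1}^{(\ell-1)} + b_i \zeta_{i-1}^{(\ell)}$ (with indices read cyclically modulo $d$) as a noisy Markov-type averaging on vectors $\zeta_i = (\zeta_i^{(1)},\dots,\zeta_i^{(d)}) \in [0,1]^d$. The clean case to have in mind is when $\delta_i = 0$ and $a_i + b_i = 1$; then each step is a genuine convex combination, the sum $\sum_\ell \zeta_i^{(\ell)}$ is preserved exactly, and the uniform lower bound $a_i, b_i \geq c$ forces exponential contraction of the oscillation $\mathrm{osc}(\zeta_i) := \max_\ell \zeta_i^{(\ell)} - \min_\ell \zeta_i^{(\ell)}$. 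The full statement is a perturbation of this clean picture.

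First I would normalize, setting $a_i' = a_i/(a_i+b_i)$, $b_i' = b_i/(a_i+b_i)$, and $\sigma_i = 1 - (a_i + b_i) \in [0, \delta_i)$. Then the hypothesis rewrites as
\[
\zeta_i^{(\ell)} = (1 - \sigma_i)\bigl(a_i' \zeta_{i-1}^{(\ell-1)} + b_i' \zeta_{i-1}^{(\ell)}\bigr) + \eta_i^{(\ell)}, \qquad |\eta_i^{(\ell)}| < \delta_{i-1},
\]
so we genuinely have a convex combination, scaled by $1 - \sigma_i$, plus an additive error bounded by $\delta_{i-1}$. Let $L_i$ denote the linear averaging operator $(L_i v)^{(\ell)} = a_i' v^{(\ell-1)} + b_i' v^{(\ell)}$.

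Next I would prove the contraction step. Because $a_i', b_i' \geq c/(a_i + b_i) \geq c$ (using $a_i+b_i\leq 1$), each $L_i$ is a doubly stochastic matrix with two entries per row, both at least $c$. A composition of $d$ such operators has every entry bounded below by $c^d > 0$ (every cyclic index can be reached by choosing shifts), so by the usual Doeblin/Markov contraction
\[
\mathrm{osc}(L_{i+d}\cdots L_{i+1} v) \leq (1 - d\,c^d)\,\mathrm{osc}(v).
\]
Setting $\rho = (1 - d\,c^d)^{1/d}$ yields $\mathrm{osc}(L_i \cdots L_{k+1} v) \leq \rho^{i-k}\mathrm{osc}(v) \leq \rho^{i-k}$ since $v \in [0,1]^d$. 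Layering in the multiplicative factors $(1-\sigma_i)$ and additive errors $\eta_i^{(\ell)}$ with $|\eta_i^{(\ell)}| < \delta_{i-1}$ and the telescoping $\prod (1-\sigma_j) \geq 1 - \sum \sigma_j$, I would iterate to obtain
\[
\mathrm{osc}(\zeta_i) \leq \rho^{i-k}\mathrm{osc}(\zeta_k) + C\sum_{j=k}^{i-1}\delta_j.
\]

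Then I would track the average $\bar\zeta_i = \tfrac{1}{d}\sum_\ell \zeta_i^{(\ell)}$. Summing the defining inequality over $\ell$ and using that $\sum_\ell \zeta_{i-1}^{(\ell-1)} = \sum_\ell \zeta_{i-1}^{(\ell)} = d\,\bar\zeta_{i-1}$ gives
\[
\bigl|\bar\zeta_i - (a_i+b_i)\bar\zeta_{i-1}\bigr| < \delta_{i-1},
\]
hence $|\bar\zeta_i - \bar\zeta_{i-1}| \leq \sigma_i + \delta_{i-1} \leq \delta_i + \delta_{i-1}$; the $\delta_i/(1-\delta_i)$ factor in the conclusion arises when one also needs to renormalize the multiplicative drift. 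Telescoping, $|\bar\zeta_i - \bar\zeta_k| \leq C\sum_{j=k}^{i-1}(\delta_j + \delta_j/(1-\delta_j))$. Combining with the previous step via
\[
\bigl|\zeta_i^{(s)} - \bar\zeta_k\bigr| \leq |\zeta_i^{(s)} - \bar\zeta_i| + |\bar\zeta_i - \bar\zeta_k| \leq \mathrm{osc}(\zeta_i) + |\bar\zeta_i - \bar\zeta_k|
\]
delivers the claimed bound.

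The main obstacle I anticipate is the first (contraction) step: the operator $L_i$ by itself is only tri-diagonal cyclic and contracts oscillation by a factor close to $1$, so the $d$-step composition is what matters. I would need to verify carefully that $d$ consecutive compositions produce a strictly positive matrix with every entry at least $c^d$ (this uses the cyclic structure -- any column is reached from any row in exactly one of the $2^d$ paths, and at least one such path contributes a product $\geq c^d$). The rest of the argument is clean bookkeeping of the multiplicative and additive errors, using $\delta_i < 1/2$ to ensure $1 - \sigma_i$ stays bounded away from zero.
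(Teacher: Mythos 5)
The paper itself does not contain a proof of this lemma; it states that ``the next lemma and its proof are unchanged from \cite[Lemma 3.2]{MR4269425},'' so there is no in-text argument to compare against. Evaluated on its own terms, your proposal is correct in its essentials and supplies a complete proof. The decomposition into (i) a normalized cyclic bidiagonal doubly-stochastic operator $L_i$, (ii) a multiplicative shrinkage $(1-\sigma_i)$, and (iii) an additive noise $\eta_i$ with $\|\eta_i\|_\infty<\delta_{i-1}$ is exactly the right way to see the structure, and the Doeblin--Dobrushin contraction of oscillation after $d$ steps (composing $d$ matrices so that every entry is $\ge c^d$, hence the contraction factor is $\le 1-dc^d$) is the correct mechanism for the $\rho^{i-k}$ term. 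The separate tracking of the average $\bar\zeta_i$ and the final triangle inequality close the argument, with $\delta_j/(1-\delta_j)$ arising exactly from the factor $1/(a_j+b_j)\le 1/(1-\delta_j)$ when you compensate for the multiplicative drift.

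Two small inaccuracies worth fixing, neither of which affects the conclusion. First, the assertion that $\mathrm{osc}(L_i\cdots L_{k+1}v)\le\rho^{i-k}\mathrm{osc}(v)$ with $\rho=(1-dc^d)^{1/d}$ is not literally true for a single step or a non-multiple of $d$; what one actually gets is $(1-dc^d)^{\lfloor (i-k)/d\rfloor}\mathrm{osc}(v)$, which differs from $\rho^{i-k}$ by a bounded factor $\rho^{-d}$ that should be absorbed into the constant $C$. Second, the remark that ``any column is reached from any row in exactly one of the $2^d$ paths'' is off: there are $\binom{d}{t}$ such paths where $t=(\ell-m)\bmod d$ (or $t\in\{0,d\}$ when $\ell=m$); the correct and sufficient observation is simply that at least one path exists for every $(\ell,m)$, giving the lower bound $c^d$. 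Finally, you should state explicitly that the index $\ell-1$ is taken modulo $d$ with $\zeta^{(0)}:=\zeta^{(d)}$ --- this is what the application with $d=2$ requires, and without the cyclic convention the hypothesis at $\ell=1$ would not make sense.
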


\begin{corollary}
\label{cor:to:bary}
Under the assumptions of Proposition~\ref{prop:ce} there
exist $ \rho<1$, $C'>0$ so that $d_{KR}(\nu_k^{(\ell)},\frac 1
d\sum_{\ell=1}^d \nu_{b}^{(\ell)})\leq C'\sum_{j=b}^k\epsilon_j
+C'\rho^{k-b}$ whenever $k\geq b$ and $\ell\in \{1,\dots,d\}$.
\end{corollary}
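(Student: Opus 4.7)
The plan is to apply \cref{lemma:to:bary} to the real sequence $\zeta_i^{(\ell)} = \nu_{\beta_\ell(i)}(g)$, where $g : X \times X \to [-1,1]$ is an arbitrary 1-Lipschitz function, $\beta_1 = \alpha$ and $\beta_2 = \gamma$ (so $d = 2$ and indices are read cyclically). Taking $a_i = \mu(A(i))$, $b_i = \mu(B(i))$, and $\delta_i$ a small multiple of $\epsilon(i)$, the hypotheses $a_i, b_i > c$ and $a_i + b_i > 1 - \delta_i$ follow immediately from \ref{ce:proportion} and \ref{ce:discard}. Once the recursive estimate \eqref{eq:gammas:assumption} is established, \cref{lemma:to:bary} yields the desired inequality for $|\nu_k^{(\ell)}(g) - \tfrac{1}{d} \sum_{\ell'} \nu_b^{(\ell')}(g)|$, and taking the supremum over 1-Lipschitz $g$ converts this into a bound on $\metric_\KR$.

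The bulk of the work is verifying \eqref{eq:gammas:assumption}, i.e.
\[
\bigl| \nu_{\beta_\ell(i)}(g) - a_i \nu_{\beta_{\ell-1}(i-1)}(g) - b_i \nu_{\beta_\ell(i-1)}(g) \bigr| < \delta_{i-1}.
\]
First I would use \ref{ce:kr} to write $\nu_{\beta_\ell(i)}(g)$ as an integral of ergodic averages, choosing $L$ of order $\epsilon(i) r(i+1)$. For a 1-Lipschitz $g$, the ``for all $x$'' part of \ref{ce:kr} at scale $i - 1$ gives $\nu_{\beta_\ell(i)}(g) \approx \int \tfrac{1}{L} \sum_{j=1}^L g(T^j x, T^{\beta_\ell(i)+j} x) \intd \mu(x)$ up to error $\epsilon(i-1)$. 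Partition the domain as $X = A(i) \cup B(i) \cup U(i)$. On $A(i)$, condition \ref{ce:nearness} gives $\metric(T^{\beta_\ell(i)} x, T^{\beta_{\ell-1}(i-1)} x) < \epsilon(i)$, so after composing with $T^j$ along the orbit (using uniform continuity of $T^j$ on a large compact set via Lusin, exactly as in the proof of \cref{thm:getting_constant weak}) one replaces $T^{\beta_\ell(i)+j} x$ by $T^{\beta_{\ell-1}(i-1) + j} x$ at the cost of $O(\epsilon(i))$. The $A(i)$ contribution is then approximately $\mu(A(i)) \nu_{\beta_{\ell-1}(i-1)}(g) = a_i \nu_{\beta_{\ell-1}(i-1)}(g)$. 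The $B(i)$ integral is handled identically with the ``no swap'' nearness giving $b_i \nu_{\beta_\ell(i-1)}(g)$, and the $U(i)$ contribution is bounded by $2 \mu(U(i)) \le 2\epsilon(i)$ by \ref{ce:discard}.

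The modification relative to \cite[Corollary~3.3]{MR4269425} is in handling the second, exceptional-set version of \ref{ce:kr}, i.e. the inequality \eqref{eq:ce changed}. In the original argument \ref{ce:kr} holds for \emph{all} $x$ at the required $L$, so the previous paragraph applies directly. Here, the ``for all $x$'' bound only applies when $L \ge \tfrac{1}{9} \epsilon(i) r(i+1)$, and for the smaller values of $L$ needed later in the argument we instead have that the ergodic average fails to approximate $\nu_{\beta_\ell(i)}$ only on an exceptional set of $\mu$-measure $< \epsilon(i)$. The hard part will be showing that this exceptional set can be absorbed into $\delta_{i-1}$: after replacing $\nu_{\beta_\ell(i)}(g)$ by the ergodic average on the complement of the exceptional set and estimating the complement by $\epsilon(i) \cdot 2$ (using $\|g\|_\infty \le 1$), the remaining computation proceeds as above, and the extra error $O(\epsilon(i))$ simply contributes to the constant $C$.

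With \eqref{eq:gammas:assumption} verified for $\delta_{i-1}$ a fixed multiple of $\epsilon(i-1)$, \cref{lemma:to:bary} produces constants $\rho < 1$ and $C$ (depending only on $c$ and $d = 2$) such that
\[
\bigl| \zeta_k^{(\ell)} - \tfrac{1}{d} \textstyle\sum_{\ell'=1}^d \zeta_b^{(\ell')} \bigr| \le C \sum_{j=b}^{k-1} \bigl( \epsilon(j) + \tfrac{\epsilon(j)}{1-\epsilon(j)} \bigr) + C \rho^{k-b}
\]
for all $k > b$ and $\ell \in \{1, \dots, d\}$. Since $\epsilon(j) \le 1/2$ for all large $j$ by \ref{ce:error}, the bracketed sum is at most $3 \sum_{j=b}^{k-1} \epsilon(j)$, and adjusting $C$ gives the stated inequality with constant $C'$. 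Finally, taking the supremum over 1-Lipschitz $g$ with $\|g\|_\infty \le 1$ establishes the $\metric_\KR$ bound, which completes the proof.
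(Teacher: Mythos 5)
Your scaffolding matches the paper's: set $\zeta_i^{(\ell)} = \nu_{\beta_\ell(i)}(g)$ for 1-Lipschitz $g$ with $\|g\|_\infty \le 1$, take $a_i = \mu(A(i))$, $b_i = \mu(B(i))$, $\delta_i$ a multiple of $\epsilon(i)$, check $a_i, b_i > c$ and $a_i + b_i > 1 - \delta_i$ from \ref{ce:proportion} and \ref{ce:discard}, and feed the recursive estimate \eqref{eq:gammas:assumption} to \cref{lemma:to:bary}. The gap is in how you verify \eqref{eq:gammas:assumption}. After writing $\nu_{\beta_\ell(i)}(g) = \int \frac{1}{L}\sum_{j=1}^L g(T^j x, T^{\beta_\ell(i)+j}x)\,d\mu$ and restricting to $A(i)$, you replace $T^{\beta_\ell(i)+j}x$ by $T^{\beta_{\ell-1}(i-1)+j}x$ ``at the cost of $O(\epsilon(i))$'', invoking Lusin for uniform continuity of $T^j$. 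But $j$ runs up to $L$ of order $\epsilon(i) r(i+1)$, which is unbounded in $i$, while $\epsilon(i)$ is fixed by \ref{ce:nearness}. In \cref{thm:getting_constant weak} the Lusin argument works only because $\tilde B$ is fixed \emph{first} and the agreement scale $\tau_{\varepsilon}$ is chosen \emph{afterwards} to survive $\tilde B + 9$ iterates; here the dependence runs the wrong way, and nothing in the hypotheses of \cref{prop:ce} controls $\sup_{j\le L(i)} \metric(T^j a, T^j b)$ when $\metric(a,b)<\epsilon(i)$. This is exactly where the decorrelation $\frac{1}{\mu(A(i))}\nu_{\beta_{\ell-1}(i-1)}\big|_{A(i)} \approx \nu_{\beta_{\ell-1}(i-1)}$ (the paper's \eqref{eq:KR est2}) must be earned, and your argument does not earn it.

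The paper sidesteps distance propagation entirely by exploiting the Rokhlin-tower structure of $A(j) = \bigcup_{i=0}^{r(j)-1} T^i J(j) \setminus U(j)$. It first swaps $\int_{A(j)} f\,d\nu_{\alpha(j)} \approx \int_{A(j)} f\,d\nu_{\gamma(j-1)}$ using only the pointwise \ref{ce:nearness}, with no iterates of $T$ involved; it then rewrites $\int_{A(j)} f\,d\nu_{\gamma(j-1)}$ \emph{exactly}, by $\mu$-invariance along the tower, as an integral over base points $y\in J(j)$ of Birkhoff sums $\sum_i f(T^i y, T^{\gamma(j-1)+i}y)$, so there is nothing to propagate. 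Decorrelation then comes from partitioning those sums into sliding windows of length $\epsilon(j-1)r(j)$ and applying \ref{ce:kr}, including the exceptional-set estimate \eqref{eq:ce changed}, which is precisely the strengthening beyond \cite{MR4269425} that this appendix exists to accommodate. If you restructure the verification of \eqref{eq:gammas:assumption} around the tower of $J(i)$ rather than a Lusin-controlled push-through of $T^j$, the proof will close.
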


\begin{proof}[Proof of Corollary~\ref{cor:to:bary}]
First notice that by \ref{ce:nearness}  we have that 
\begin{equation}\label{eq:bary1}
d_{KR}(\nu_{\alpha(j)}|_{A(j)},\nu_{\gamma(j-1)}|_{A(j)})<\epsilon(j)\text{ and }d_{KR}(\nu_{(\alpha(j)}|_{B(j)},\nu_{\alpha(j-1)}|_{B(j)})<\epsilon(j).
\end{equation}
We now claim that there exists a constant $D$ so that for all $j$, 
\begin{equation}
\label{eq:KR est2}
d_{KR}\left(\frac 1 {\mu(A(j))} \nu_{\gamma(j-1)}|_{A(j)},\nu_{\gamma(j-1)}\right)<D(\epsilon(j-1)+\epsilon(j))
\end{equation}
By definition of $A(j)$ we have that  
$$\int f d\nu(\gamma(j-1))|_{A(j)}=\int_{\cup_{i=0}^{r(j)-1}T^iJ(j)\setminus U(j)}f(x,T^{\gamma(j-1)}x).$$

 Now, 
\begin{multline}\label{eq:window}
\bigg|\int_{\cup_{i=0}^{r(j)-1} T^iJ(j)}f\circ (T \times T)^i(x,T^{\gamma(j-1)}x)-\sum_{j=0}^{(1-\epsilon(j-1))r(j)-1}\frac 1 {\epsilon(j-1)r(j)}\int_{\cup_{i=0}^{\epsilon(j-1)r(j)}T^{i+j} J(j)}f\circ (T \times T)^i(x,T^{\gamma(j-1)}x)\bigg| \\ \leq 2(\epsilon(j-1)r(j))\mu(J(j))\|f\|_{\sup}.
\end{multline}

To see \eqref{eq:window}, the integral has the form
$$\int_{\bigcup_{i=0}^{r(j)}T^iJ(j)}f(x,T^{\gamma(j-1)}x)-c(x)f(x,T^{\gamma(j-1)}x)$$ where $0\leq c(x)\leq 1$ for all $x\in \bigcup_{i=0}^{r(j)-1}T^iJ(j)$ and $c(x)=1$ for all $x \in \bigcup_{i=e(j-1)r(j)}^{(1-e(j-1))r(j)-1}T^i J(j)$, giving \eqref{eq:window}.

By \ref{ce:kr},

\begin{multline}\label{eq:CEchanged}
\bigg| \frac 1 {(1-\epsilon(j-1))r(j)}\sum_{j=0}^{(1-\epsilon(j-1))r(j)}\frac{1}{\mu(J(j))\epsilon(j-1)r(j)}
\\ \int_{\bigcup_{i=0}^{\epsilon(j-1)r(j)-1}T^iT^kJ(j)}f \circ (T \times T)^i(x,T^{\gamma(j-1)}x)
d\mu -  \int_{X \times X} f \,
d\nu_{\gamma(j-1)}\bigg|  
\le 3 \|f\|_{\text{Lip}} \epsilon({j-1})+4\|f\|_{\sup}\epsilon(j).
\end{multline}
To see this, we split the left hand side into two parts: the $x\in \bigcup_{k=0}^{(1-\epsilon(j-1)r(j)}T^kJ_j$ for which \eqref{eq:ce changed} holds and the complement that we estimate trivially.

Because $A(j)=\bigcup_{j=0}^{r(j)-1}T^jJ(j)\setminus U(j)$
we now use  \ref{ce:discard} (i.e.\ the size estimate on $U_j$),

\begin{multline}
   \bigg| \frac 1 {(1-\epsilon(j-1))r(j)}\sum_{j=0}^{(1-\epsilon(j-1))r(j)}\frac{1}{\lambda(J(j))\epsilon(j-1)r(j)}\\
 \int_{\bigcup_{i=0}^{\epsilon(j-1)r(j)-1}T^iT^kJ(j)}f \circ (T \times T)^i(x,T^{\gamma(j-1)}x)\chi_{U(j)}(x)
d\mu\bigg| \\ 
\le \|f\|_{\sup} \mu(U(j))
\le \|f\|_{\sup} \epsilon(j).
\end{multline}

Combining these estimates we have \eqref{eq:KR est2}.

Similarly, by partitioning $B_j$ into $D_{\frac{r_j}2},...$ where 
$$D_\ell=\{x\in S^{r_j}J_j: \min\{i>0:S^ix\in J_j\}=\ell\},$$ 
we get that there exists $E>0$ so that 
\begin{equation}
\label{eq:KR:est:2prime}
d_{KR}\left(\frac 1 {\mu(B(j)} \nu_{(\alpha(j-1))}|_{B(j)},\nu_{(\alpha(j-1))}\right)<E(\epsilon(j-1)+\epsilon(j)).
\end{equation}

We have analogous versions of these estimates swapping the roles of $\alpha(j)$ for $\gamma(j)$, $\alpha(j-1)$ for $\gamma(j-1)$ and $\gamma(j-1)$ for $\alpha(j-1)$.

 So for any 1-Lipschitz function, $f$, with $\|f\|_{\sup}\leq 1$, we
 claim that we may apply Lemma~\ref{lemma:to:bary} to 
$\zeta_i^{(1)}=\int fd\nu_{\alpha(i)}$, $\zeta_i^{(2)}=\int f d\nu_{\gamma(i)}$ with $c=c$,
$\delta_{j-1}=(E+D)(\epsilon(j-1)+\epsilon(j))$,
$a_j=\mu(A(j))$ and $b_j=\mu(B(j))$. To verify
(\ref{eq:gammas:assumption}), note that
$$\left|\int f
d\nu_{\alpha(j)}-\int_{A(j)}fd\nu_{\alpha(j)}-\int_{B(j)}fd\nu_{\alpha(j)}\right|\leq
\|f\|_{\sup}\lambda(U(j))<\epsilon(j)$$ and similarly for $\gamma(j)$. Thus,  by (\ref{eq:bary1})
$$\left|\int fd \nu_{\alpha(j)}-\int_{A(j)} f\, d\nu_{\gamma(j-1)}-\int_{B(j)} f
d\nu_{\alpha(j-1)}\right|\leq 2\epsilon(j).$$
Then, by (\ref{eq:KR est2}) and (\ref{eq:KR:est:2prime}),
$$\left|\int fd \nu_{\alpha(j)}-\Big(\mu(A(j))\int
fd\nu_{\gamma(j-1)}+\mu(B(j))\int f
d\nu_{\alpha(j)}\Big)\right|\leq (E+D)(\epsilon(j-1)+\epsilon(j)).$$
This and the analogous result for $\nu_{\gamma(j)}$, completes the verification of (\ref{eq:gammas:assumption}), and,
in view of Lemma~\ref{lemma:to:bary}, the proof of
Corollary~\ref{cor:to:bary}. 
\end{proof}

\section{Notation}

The following symbols - used throughout the paper - are recalled here for reference.

\subsection*{Odometer notation}

\begin{itemize}
\item
$\Omega$ is the odometer space
\item
$n \mapsto a(n)$ enumerates those sparse positions where the odometer has many digits
\item
$n \mapsto 2d(n)+2$ is the number of digits in the odometer at position $a(n)$
\item
$n \mapsto c(n) + 1$ is the number of digits in the $n$th position of the odometer
\item
$n \mapsto t(n)$ is the number of cylinders of length $n$ in $\Omega$
\item
$\nu$ is the normalized Haar measure on $\Omega$
\end{itemize}

\subsection*{System notation}

\begin{itemize}
\item
$n \mapsto W_n$ describes the cylinders of length $n$ removed from the odometer
\item
$Y_n = \Omega \setminus (W_1 \cup \cdots \cup W_n)$
\item
$X$ is the space $\Omega \setminus \bigcup\limits_{n \in \N} W_n$
\item
$T$ is the transformation we consider on $X$
\item
$\mu$ is the normalized restriction of $\nu$ to $X$
\item $\bad(b,r) = \displaystyle\bigcup_{a(m) > b} \{x \in X : (S|Y_b)^i(x) \in W_{a(m)} \textup{ for some } -|r| \le i \le |r|\}$
\end{itemize}

\subsection*{Specific times}

\begin{itemize}
\item
$\p(n)$ is the minimal natural number for which $(S|Y_n)^{\p(n)}[0^{n-1} 0] = [0^{n-1} 1]$
\item
$\P(n) = \p(a(n))$ is the height of the left tower at scale $a(n)$
\item
$\Q(n) = 2\mathsf{P}(n-1) - \Q(n-1)$ is the number of levels removed from $\Omega$ at scale $a(n)$
\item
$\q(n) = \Q(a^{-1}(n))$ when $n$ unbounded and zero otherwise
\end{itemize}

\subsection*{Expansion notation}

\begin{itemize}
\item
$\lst(r)$ is the leading coefficient in the tower expansion of $r \in \Z$
\item
$\scl(r)$ is the leading scale in the tower expansion of $r \in \Z$
\end{itemize}

\subsection*{Reduction notation}

\begin{itemize}
\item
$\lft(r) = r - \lst(r) \p(\scl(r))$ whenever $\scl(r)$ is unbounded
\item
$\rht(r) = r - \lst(r) \p(\scl(r)) + \lst(r) \q(\scl(r))$ whenever $\scl(r)$ is unbounded
\item
$\lambda(n)$ is the largest unbounded scale strictly smaller than $n$ 
\end{itemize}

\printbibliography

\end{document}